\newcommand{\Wspace}{\mathcal{P}(\mathbb{R}^n)} 
\newcommand{\T}{\mathbb{T}}
\newcommand{\Pcal}{\mathcal{P}}
\newcommand{\Kcal}{\mathcal{K}}
\newcommand{\Lcal}{\mathcal{L}}
\newcommand{\Fcal}{\mathcal{F}}
\newcommand{\Scal}{\mathcal{S}}
\newcommand{\diff}{\mathrm{d}}
\newtheorem{remark}{Remark}[section]
\newtheorem{theorem}[remark]{Theorem}
\newtheorem{corollary}[remark]{Corollary}
\newtheorem{lemma}[remark]{Lemma}
\newtheorem{definition}[remark]{Definition}
\numberwithin{equation}{section}
\title[Inverse Problems for Transport PDEs on Wasserstein Space]{Inverse Problems for Infinite-dimensional Transport PDEs on Wasserstein Space}
\author{Hongyu Liu}
\address{Department of Mathematics, City University of Hong Kong, Hong Kong SAR, China}
\email{hongyliu@cityu.edu.hk, hongyu.liuip@gmail.com}
\author{Jianliang Qian}
\address{Department of Mathematics, Michigan State University, East Lansing, MI 48824, USA}
\email{jqian@msu.edu}
\author{Shen Zhang}
\address{Department of Mathematics, Michigan State University, East Lansing, MI 48824, USA}
\email{zhan2378@msu.edu}
\begin{document}

	\maketitle
	
	\begin{abstract}
We develop a foundational framework for inverse problems governed by evolutionary partial differential equations (PDEs) on the Wasserstein space of probability measures. While the forward problems for such transport-type PDEs have been extensively and intensively studied, their corresponding inverse problems—which aim to reconstruct unknown operators, cost functions, or interaction kernels from observed solution data—remain largely unexplored at this level of generality.

The cornerstone of our theory is a systematic approach featuring high-order calculus on the Wasserstein space and a progressive variational scheme. This methodology is specifically designed to address the challenges inherent in inverse problems for infinite-dimensional, nonlinear, and nonlocal transport PDEs.

We demonstrate the power and versatility of our theory through two canonical examples: inverse problems for both the Mean Field Control (MFC) Dynamic Programming Equation and the Mean Field Game (MFG) Master Equation. Our work provides, for the first time, a unified foundation for identifying cost functions and interaction kernels from value function data. This establishes a new and fertile field of mathematical research with significant implications for both theory and applications in stochastic control and mean field games.

		\medskip
		
		\noindent{\bf Keywords.} Transport PDEs, Wasserstein space, infinite-dimensional analysis, inverse problems, mean field games, stochastic control, high-order calculus, progressive variation, global unique identifiability

		\noindent{\bf Mathematics Subject Classification (2020)}: Primary 35Q89, 35R30; Secondary 91A16, 49N80
		
	\end{abstract}

\section{Introduction}\label{sect:intro}

The past two decades have witnessed a profound transformation in the analysis of complex, high-dimensional dynamical systems, driven by a paradigm shift towards a geometric viewpoint on the space of probability measures. The ascendance of optimal transport theory, particularly its modern incarnation via the Wasserstein space $\Wspace$, has provided a powerful language to recast evolutionary phenomena—from gradient flows in statistical physics to interacting particle systems—into a unified geometric framework \cite{ambrosio2008,Lions}. This perspective has revealed that many fundamental equations of mathematical physics and applied mathematics are, in essence, {transport partial differential equations (PDEs)} on $\Wspace$, governed by a differential calculus that mirrors and generalizes its Euclidean counterpart.

Among the most significant breakthroughs enabled by this viewpoint is the theory of {Mean Field Games (MFGs)}, introduced independently by Lasry and Lions \cite{lasry2007mean,lasry2006jeux,lasry2006jeux-b,Lions} and Huang, Caines, and Malhamé \cite{huang2006large,Huang2007large}. MFGs model the strategic interaction of a vast population of rational agents, whose individual optimal control problems (described by Hamilton-Jacobi-Bellman equations) are coupled through a shared aggregate population distribution (whose evolution is described by a Fokker-Planck equation). The pinnacle of this theory is the \emph{Master Equation} \cite{cardaliaguet2015master}, a single, infinite-dimensional PDE on $[0,T] \times \mathbb{R}^n \times \Wspace$ that encodes the value function of a generic agent and fully characterizes the game's equilibrium. Similarly, the classical \emph{Dynamic Programming Principle} for a single stochastic controller leads to a Hamilton-Jacobi-Bellman equation, which can itself be interpreted as a PDE on a finite-dimensional projection of the Wasserstein space.

The analysis of these {forward} problems—establishing well-posedness, regularity, and deriving explicit solutions—represents a vibrant frontier in modern PDE theory\cite{mou2025mean,mou2019wellposedness,gangbo2022global,gangbo2022mean}. Indeed, the extension of PDE analysis to infinite-dimensional spaces constitutes a profound evolution in modern mathematics, marking a significant departure from classical finite-dimensional theory. This transition introduces fundamental challenges that demand novel mathematical frameworks: the absence of a canonical Lebesgue measure necessitates alternative notions of integration and differentiation, the choice of topology becomes crucial in defining appropriate solution concepts, and the development of specialized differential calculi becomes {imperative  \cite{Cannarsa2004,ambrosio2008, Lions}.}

Despite these inherent difficulties, the field has witnessed remarkable progress in recent years. Researchers have developed innovative analytical techniques specifically designed to handle the intrinsic nonlinearity, nonlocality, and infinite-dimensional character of these equations. This has led to breakthroughs in understanding the qualitative behavior of solutions and establishing comprehensive existence theories. For comprehensive treatments of the historical development of infinite-dimensional PDEs, we refer to \cite{Volterra1887,Frechet1906,Gateaux1913,Poincare1890,Hille1948,Yosida1948,Kolmogorov1931}, while contemporary advances and state-of-the-art methodologies are detailed in {\cite{hairer2009introduction,otto2021variational,figalli2007existence, Linear_in_Tn, mou2019wellposedness}.}

However, while the analysis of these forward problems has advanced considerably, their {inverse problems} remain a largely uncharted territory of profound theoretical challenge and practical importance. The central question is as fundamental as it is difficult: Given observational data of a system's solution—be it the value function of agents or the temporal evolution of the population distribution—can one reconstruct the underlying governing structures? These structures include the running cost in a control problem, the interaction kernel between agents in an MFG, or the terminal cost shaping their long-term objectives. Solving such inverse problems is the key to \textit{model discovery} and \textit{validation} across the sciences, from inferring rational objectives in economics to calibrating interaction potentials in particle physics.

The existing literature on inverse problems has yet to confront the profound mathematical depth of this setting. Previous work has largely been confined to inverse problems for \textit{finite-dimensional} or \textit{local} PDEs, or specific, often linearized, cases of MFG systems \cite{klibanov2023holder,klibanov2023lipschitz,imanuvilov2023unique,liu2023inverse,Cauchydata-zs,ren2025unique}. There is also a growing body of numerical studies which leverage finite-dimensional approximations for inverse MFGs or optimal transport \cite{chow2022numerical,ding2022mean,guo2024decoding,stuart2020inverse,yu2025equilibrium}. However, a general theory for inverse problems on $\Wspace$ is absent. The core impediments are intrinsic to the domain: the infinite-dimensional, non-linear, and non-local nature of the governing PDEs, combined with the subtle geometric structure of the Wasserstein space itself,  renders classical methodologies—such as the Dirichlet-to-Neumann map or boundary control—largely inapplicable. Consequently, a fundamentally new mathematical architecture is required.

This paper bridges this foundational gap by developing a comprehensive theoretical framework for inverse problems governed by evolutionary transport PDEs on $\Wspace$: 

\begin{enumerate}
    \item We introduce a novel mathematical framework for studying inverse problems associated with infinite-dimensional transport PDEs on $\Wspace$, providing a unified approach to this emerging field.\smallskip
    
    \item We develop a systematic formalism of \emph{high-order calculus on the Wasserstein space} which in combination with a novel \emph{progressive variational scheme} can effectively tackle the intrinsic challenges for these complex inverse problems associated with infinite-dimensional PDEs.\smallskip
    
    \item We establish the first general well-posedness theory (in the sense of Hadamard) for this class of inverse problems, providing sufficient conditions for the global uniqueness of recovering multiple unknowns. \smallskip
    
    \item  We demonstrate that our framework provides a \emph{unified} mathematical foundation, showing that inverse problems for both the \emph{Dynamic Programming Principle} (manifested through Hamilton-Jacobi-Bellman equations) and the \emph{MFG Master Equation} emerge as specific, canonical instances of our general theory. This enables identification of cost functions in optimal control and reconstruction of interaction kernels and terminal costs in mean field games.
\end{enumerate}

By moving from specific instances to a general principle, this work does not only solve two inverse problems but also opens up an entirely new field of research at the intersection of optimal transport, PDE theory, and inverse problems. It provides the mathematical tools to \textit{look backward} into the governing laws of complex systems by observing their collective behavior, a capability with far-reaching implications for science and engineering.

The rest of this paper is organized as follows. Section 2 presents the general framework for inverse problems and states the main global recovery theorems. Section 3 establishes the high-order calculus on the Wasserstein space. Section 4 is dedicated to the application of our theory to the inverse problems for the Dynamic Programming Principle. Section 5 applies the framework to the inverse problems for the MFG Master Equation in $\mathcal{P}(\mathbb{T}^n)$. In Section 6, we consider some inverse boundary value problems for the Master Equation in more general domains.

\section{Statement of main results and technical developments}

\subsection{Statement of main results}
We begin by introducing a general class of infinite-dimensional transport PDEs on the Wasserstein space—specifically, on the space $\mathcal{P}(\mathbb{T}^n)$ of Borel probability measures on the $n$-dimensional torus, $n\in\mathbb{N}$. Endowed with the Wasserstein metric, which quantifies the optimal cost of transporting mass, this space possesses a rich, non-linear geometric structure. This framework fundamentally differentiates it from flat spaces and is the source of the non-local character and intrinsic challenges of these PDEs. Our focus on $\mathbb{T}^n$ is for expository clarity; the extension to a bounded domain $\Omega\subset\mathbb{R}^n$, while more technically involved, follows analogous arguments, and we will treat such inverse problems on $\mathcal{P}(\Omega)$ in Section~6, which inevitably involve boundary conditions. 

Let $U(t, x, m)$, for $(t, x, m)\in \mathbb{R}_+\times\mathbb{T}^n\times\mathcal{P}(\mathbb{T}^n)$, be a real-valued function representing the value function. Consider the following transport PDE, 
\begin{equation}\label{eq:ts1}
\begin{cases}
\displaystyle{ -\partial_t U(t, x, m)+S\left(t, x, D_x^j U, \frac{\delta U}{\delta m},m\right)=0,}\quad\ \\ 
U(T, x, m)=G(x, m),
\end{cases}
\end{equation}
where $G:\mathbb{T}^n\times\mathcal{P}(\mathbb{T}^n)\to\mathbb{R}$ is the terminal cost, $T\in\mathbb{R}_+$ is the terminal time, and $S$ is a real-valued operator. Here, $\frac{\delta U}{\delta m}$ denotes the Lions derivative, and $D_x^j U$, $j\in\mathbb{N}_0$, denotes the spatial derivatives up to order $j\leq 2$. The operator $S$ is critical for the transport process and takes specific forms in different contexts, which will be detailed later.

We primarily address the inverse problem of recovering the operator $S$ from knowledge of the initial data of the value function $U(0, x, m)$ for all $(x, m)\in \mathbb{T}^n\times\mathcal{P}(\mathbb{T}^n)$. This problem can be formulated as the operator equation, 
\begin{equation}\label{eq:ip1}
\mathcal{M}_{G}(S)=U(0, x, m),\quad (x, m)\in \mathbb{T}^n\times\mathcal{P}(\mathbb{T}^n),
\end{equation}
where the measurement operator $\mathcal{M}_G$ maps the unknown $S$ to the data $U(0, x, m)$ via the transport PDE \eqref{eq:ts1} for a given terminal cost $G$.

Our main concern is the unique identifiability issue for the inverse problem \eqref{eq:ip1}:
\begin{equation}\label{eq:u1}
S_1=S_2\quad\mbox{if and only if}\quad U_1(0, x, m)=U_2(0, x, m),\quad (x, m)\in\mathbb{T}^n\times\mathcal{P}(\mathbb{T}^n).
\end{equation}
That is, we aim to establish sufficient conditions that guarantee the bijectivity of the operator $\mathcal{M}_G$ in \eqref{eq:ip1} under general scenarios.

It is crucial to observe that if the measure variable $m$ were replaced by a finite-dimensional parameter, establishing the global uniqueness result in \eqref{eq:u1} would generally be impossible. This stems from the fact that the cardinality—that is, the number of independent variables—of the unknown operator $S$ would exceed that of the available data $U(0, x, m)$. This scenario reduces to the classical, severely ill-posed backward problem for parabolic PDEs. In contrast, we demonstrate that when the transport PDE is formulated on the infinite-dimensional Wasserstein space, unique identifiability can be achieved under general conditions. We corroborate this finding through the analysis of inverse problems for the Dynamic Programming Equation (DPE) in Mean Field Control (MFC) and the Master Equation for Mean Field Games (MFGs), as we will show, though the principle likely extends to other contexts. This establishes a fundamentally new perspective in the theory of inverse problems for PDEs.

\subsection{Technical developments and novelties}

The resolution of inverse problems on the Wasserstein space demands innovative mathematical tools that transcend classical methodologies. Our approach synthesizes several interconnected technical developments, creating a unified framework to address the unique challenges posed by infinite-dimensional, nonlinear, and nonlocal structures.

A cornerstone of our framework is a systematic development of {high-order calculus} on the Wasserstein space, extending substantially beyond the first-order Lions derivative. We establish rigorous definitions for higher-order functional derivatives $\frac{\delta^k U}{\delta m^k}$ via directional variations, complemented by a Taylor expansion theorem for smooth functions. Furthermore, we introduce the concept of analytic functions on $\mathcal{P}(\mathbb{T}^n)$---a notion extendable to general domains---which enables powerful series representations analogous to their classical counterparts. This sophisticated functional-analytic foundation is essential for our progressive variational approach and permits effective handling of the inherent infinite-dimensional nonlinearity.

The methodological core of our work is a {systematic progressive variational  procedure}. Beginning with first-order variation, we derive systems that relate initial distributional variations to solution perturbations. By progressing to higher-order variation through mixed directional derivatives, we extract increasingly detailed information about the Taylor coefficients of the unknown operators. This process is supported by novel extensions of the chain rule to compositions involving Hamiltonian operators. Crucially, this scheme transforms the original nonlinear inverse problems into a hierarchy of linear problems, each revealing successive layers of structural information.

A major technical contribution lies in establishing a {comprehensive well-posedness theory} for the resulting linearized systems. This encompasses the rigorous treatment of Fokker-Planck type equations in distributional spaces, the analysis of coupled MFG linearized systems, and the derivation of uniform estimates and stability results. These developments ensure the mathematical soundness of our variational  procedure and guarantee that the derived identities provide meaningful information for operator recovery.

Our recovery strategy employs multiple innovative identification techniques. We develop a {spectral method} that utilizes eigenfunction expansions of elliptic operators to extract Fourier coefficients of unknown derivatives, and a {static solution analysis} that leverages stationary solutions and quasi-static perturbations to isolate parameter dependencies. For boundary value problems, we extend our approach through a careful utilization of Neumann boundary conditions to extract additional informational content.

The assumption of {analyticity} provides a foundational pillar for our theoretical framework. This property ensures that functions are completely determined by their Taylor expansions, facilitates the construction of explicit examples via convolution representations, and, most importantly, guarantees that recovering all Taylor coefficients through progressive variation implies the complete recovery of the unknown operator. This analytic structure thus provides the precise mechanism through which partial information is amplified to yield full identification.

While our framework establishes a comprehensive foundation for this new class of inverse problems, several aspects invite further investigation. The analyticity assumption, though natural in many applications, could potentially be relaxed through alternative regularity hypotheses. Similarly, the technical conditions in our assumptions, while sufficient for the current results, may not be necessary in all scenarios. Natural extensions to more general cost structures and interaction kernels present promising directions for future work, as does the computational implementation of our progressive variational scheme for practical applications.

Collectively, these developments establish a new paradigm for inverse problems in infinite-dimensional spaces. The potential applications extend well beyond mean field games to diverse domains governed by measure-valued PDEs, including collective behavior models, opinion dynamics, and biological swarm systems. The mathematical architecture we develop opens new avenues for discovering the governing laws of complex systems through observational data of their emergent behavior.

\section{High-Order Calculus on the Wasserstein Space}\label{setup}
    
\subsection{Derivatives of functions on $\mathcal{P}(\mathbb{T}^n)$}

This section develops the high-order calculus theory on the Wasserstein space, which forms a critical foundation for our subsequent study of inverse problems. We note that smooth variations on the Wasserstein space have been studied from various perspectives \cite{bayraktar2023smooth, Linear_in_Tn}.

\begin{definition}\label{def_der_1}
    A function $U : \mathcal{P}(\mathbb{T}^n) \to \mathbb{R}$ is of class $C^1$ if there exists a continuous map $K: \mathcal{P}(\mathbb{T}^n) \times \mathbb{T}^n \to \mathbb{R}$ such that for all $m_1, m_2 \in \mathcal{P}(\mathbb{T}^n)$,
    \begin{equation}\label{derivation}
        \lim_{s \to 0^+} \frac{U(m_1 + s(m_2 - m_1)) - U(m_1)}{s} = \int_{\mathbb{T}^n} K(m_1, x)\,  d(m_2 - m_1)(x).
    \end{equation}
\end{definition}

The mapping $K$ in Definition \ref{def_der_1} is defined up to additive constants. We define the derivative $\frac{\delta U}{\delta m}$ as the unique representative satisfying \eqref{derivation} and the normalization condition, 
\begin{equation}\label{norm_tobe_0}
    \int_{\mathbb{T}^n} \frac{\delta U}{\delta m}(m, x)\,  dm(x) = 0.
\end{equation}

To study the regularity of $\frac{\delta U}{\delta m}$, we recall the Wasserstein distance on $\mathcal{P}(\mathbb{T}^n)$. 

\begin{definition}\label{W_distance}
    For $m_1, m_2 \in \mathcal{P}(\mathbb{T}^n)$, define
    \begin{equation}
        \mathfrak{D}(m_1, m_2) \equiv \sup_{\mathrm{Lip}(\psi) \leq 1} \int_{\mathbb{T}^n} \psi(x)\,  d(m_1 - m_2)(x),
    \end{equation}
    where the Lipschitz constant is given by
    \begin{equation}\label{eq:Lip1}
        \mathrm{Lip}(\psi) = \sup_{\substack{x, y \in \mathbb{T}^n \\ x \neq y}} \frac{|\psi(x) - \psi(y)|}{|x - y|}.
    \end{equation}
\end{definition}

\begin{definition}
    If $U$ is of class $C^1$ and $\frac{\delta U}{\delta m}$ is $C^1$ in the spatial variable, we define the intrinsic derivative $D_m U: \mathcal{P}(\mathbb{T}^n) \times \mathbb{T}^n \to \mathbb{R}^n$ as
    \begin{equation}\label{intrinsic_derivative}
        D_m U(m, x) \equiv D_x \frac{\delta U}{\delta m}(m, x).
    \end{equation}
\end{definition}

\begin{remark} 
    By definition of $\frac{\delta U}{\delta m}$, it follows that \cite{Linear_in_Tn}
    \begin{equation}
        U(m') - U(m) = \int_0^1 \int_{\mathbb{T}^n} \frac{\delta U}{\delta m}((1 - s)m + s m', y)\,  d(m' - m)(y)  ds
    \end{equation}
    for all $m, m' \in \mathcal{P}(\mathbb{T}^n)$.
\end{remark}

A function $U$ is of class $C^2$ if for each $y \in \mathbb{T}^n$, the map $m \mapsto \frac{\delta U}{\delta m}(m, y)$ is $C^1$, and we denote its derivative by $\frac{\delta^2 U}{\delta m^2}(m, y, y')$. Higher-order derivatives $\frac{\delta^k U}{\delta m^k}$ are defined recursively. If $\frac{\delta^k U}{\delta m^k}$ exists and is continuous for all $k \in \mathbb{N}$, then we say $U$ is smooth.

\begin{theorem}\label{taylor}
    If $U: \mathcal{P}(\mathbb{T}^n) \to \mathbb{R}$ is smooth, then for any $m, m' \in \mathcal{P}(\mathbb{T}^n)$ and $N \in \mathbb{N}$,
    \begin{equation}\label{taylor-equation}
        \begin{aligned}
            & U(m') - U(m)\\
             = & \sum_{k=1}^{N-1} \frac{1}{k!} \left( \int_{\mathbb{T}^n} \right)^k \frac{\delta^k U}{\delta m^k}(m, y_0, \dots, y_{k-1}) \prod_{j=0}^{k-1} d(m' - m)(y_j) \\
            & + \frac{1}{(N-1)!} \int_0^1 \left( \int_{\mathbb{T}^n} \right)^N \frac{\delta^N U}{\delta m^N}((1 - s)m + s m', y, y_1, \dots, y_{N-1})\\
            &\prod_{k=0}^{N-1} d(m' - m)(y_k)  ds.
        \end{aligned}
    \end{equation}
\end{theorem}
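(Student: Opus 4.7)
The plan is to reduce this infinite-dimensional Taylor expansion to the one-dimensional classical Taylor theorem by interpolating linearly along the segment in $\mathcal{P}(\mathbb{T}^n)$. Specifically, I would define $\phi \colon [0,1] \to \mathbb{R}$ by $\phi(s) := U((1-s)m + sm')$, so that $\phi(0) = U(m)$ and $\phi(1) = U(m')$, and the problem reduces to showing that $\phi \in C^N([0,1])$ with derivatives expressible through the flat derivatives $\frac{\delta^k U}{\delta m^k}$, and then applying the classical Taylor formula with integral remainder to $\phi$ on $[0,1]$.

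The central technical step is to prove by induction on $k \le N$ that
\[
\phi^{(k)}(s) = \left(\int_{\mathbb{T}^n}\right)^k \frac{\delta^k U}{\delta m^k}\bigl((1-s)m + sm', y_0, \ldots, y_{k-1}\bigr) \prod_{j=0}^{k-1} d(m' - m)(y_j).
\]
The base case $k = 1$ follows from Definition \ref{def_der_1} applied at the measure $m_s := (1-s)m + sm'$ in the direction of perturbation $m' - m$: the difference quotient $[\phi(s+h) - \phi(s)]/h$ is by construction the directional derivative at $m_s$ in the direction $m_{s+h} - m_s = h(m' - m)$, and the normalization \eqref{norm_tobe_0} is harmless since $m' - m$ has zero total mass, so different representatives of $\frac{\delta U}{\delta m}$ give the same integral. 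For the inductive step, I would apply the same definition to the $C^1$ map $m \mapsto \frac{\delta^{k-1} U}{\delta m^{k-1}}(m, y_0, \ldots, y_{k-2})$ inside the iterated $(k-1)$-fold integral and then swap $\frac{d}{ds}$ with those integrations.

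Once this derivative formula is in hand, applying the standard Taylor expansion with integral remainder to the scalar smooth function $\phi$ on $[0,1]$ and substituting the expression for $\phi^{(k)}(s)$ yields precisely \eqref{taylor-equation}.

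I expect the main obstacle to lie in rigorously justifying the interchange of $\frac{d}{ds}$ with iterated integrals against the signed product measure $d(m'-m)^{\otimes(k-1)}$ at the inductive step, since this is a differentiation-under-the-integral argument where the parameter is the measure variable itself. The decisive ingredients to overcome this are: (i) the compactness of $\mathbb{T}^n$, which rules out the tail difficulties that would arise on $\mathbb{R}^n$; (ii) the joint continuity of $\frac{\delta^k U}{\delta m^k}$ in all its arguments, which is built into the paper's notion of smoothness; and (iii) uniform bounds along the compact interpolation curve $\{m_s\}_{s \in [0,1]} \subset \mathcal{P}(\mathbb{T}^n)$, which together with dominated convergence allow the limit in $h$ to pass under the integral sign. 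A subsidiary point is that the signed measure $m' - m$ should be treated via its Hahn decomposition so that the iterated integrals are well defined and Fubini applies when rearranging the order of integration and differentiation.
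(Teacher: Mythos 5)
Your route is genuinely different from the paper's: you reduce everything to the classical one–dimensional Taylor theorem for $\phi(s)\equiv U((1-s)m+sm')$ after proving
\begin{equation*}
\phi^{(k)}(s)=\left(\int_{\mathbb{T}^n}\right)^{k}\frac{\delta^{k}U}{\delta m^{k}}\bigl((1-s)m+sm',y_0,\dots,y_{k-1}\bigr)\prod_{j=0}^{k-1}d(m'-m)(y_j),
\end{equation*}
whereas the paper never invokes the scalar Taylor theorem: it defines the remainder functionals $U^{(N)}(m)=U(m')-U(m)-\sum_{k<N}\cdots$, computes their flat derivatives directly from Definition~\ref{def_der_1}, and iterates the first-order identity of the remark following that definition. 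Your derivative formula and the supporting points (compactness of $\mathbb{T}^n$, joint continuity of $\frac{\delta^kU}{\delta m^k}$, dominated convergence for differentiation under the iterated integrals, and the fact that the normalization constant is irrelevant because $(m'-m)(\mathbb{T}^n)=0$) are sound; your induction is essentially Lemma~\ref{mixed2} specialized to equal directions, so the approach is legitimate and arguably cleaner.

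The one genuine problem is your final sentence. The classical Taylor formula with integral remainder reads $\phi(1)-\phi(0)=\sum_{k=1}^{N-1}\frac{1}{k!}\phi^{(k)}(0)+\frac{1}{(N-1)!}\int_0^1(1-s)^{N-1}\phi^{(N)}(s)\,ds$, so your substitution yields a remainder carrying the weight $(1-s)^{N-1}$, which is \emph{not} ``precisely'' \eqref{taylor-equation} as printed, since the printed remainder has no such weight. In fact your weighted formula is the correct one and the printed identity fails for $N\ge 2$: take $U(m)=\bigl(\int_{\mathbb{T}^n} f\,dm\bigr)^2$, set $a=\int f\,dm$ and $c=\int f\,d(m'-m)$; then $U(m')-U(m)=2ac+c^2$, while \eqref{taylor-equation} with $N=2$ gives $2ac+2c^2$. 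The source of the discrepancy in the paper's argument is the step where $\frac{\delta U^{(1)}}{\delta m}$ is integrated along the segment: at the base point $m_s=(1-s)m+sm'$ one has $d(m'-m_s)=(1-s)\,d(m'-m)$, and this factor $(1-s)$ is dropped. So do not bend your computation to reproduce the unweighted display; prove the statement with the weight $(1-s)^{N-1}$ in the remainder (or an equivalent nested-integral form). Since $|1-s|^{N-1}\le 1$, this correction only strengthens the remainder bound, so Definition~\ref{def:analytic} and all later uses of the expansion are unaffected.
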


\begin{proof}
    For a fixed $m' \in \mathcal{P}(\mathbb{T}^n)$, define
    \begin{equation}\label{eq:def u^{1}}
        U^{(1)}(m) = U(m') - U(m) - \int_{\mathbb{T}^n} \frac{\delta U}{\delta m}(m, y)\,  d(m' - m)(y).
    \end{equation}
    Then one has by direct calculations that
    \begin{equation*}
        \begin{aligned}
            & \lim_{s \to 0} \frac{1}{s} [U^{(1)}((1 - s)m + s m'') - U^{(1)}(m)] \\
            = & - \int_{\mathbb{T}^n} \frac{\delta U}{\delta m}(m, y)\,  d(m'' - m)(y) \\
            & - \lim_{s \to 0} \frac{1}{s} \left[ \int_{\mathbb{T}^n} \frac{\delta U}{\delta m}((1 - s)m + s m'', y)\,  d(m' - (1 - s)m - s m'')(y) \right. \\
            & \left. - \int_{\mathbb{T}^n} \frac{\delta U}{\delta m}(m, y)\,  d(m' - m)(y) \right] \\
            = & - \int_{\mathbb{T}^n} \frac{\delta U}{\delta m}(m, y)\,  d(m'' - m)(y) \\
            & - \lim_{s \to 0} \frac{1}{s} \left[ \int_{\mathbb{T}^n} \frac{\delta U}{\delta m}((1 - s)m + s m'', y)\,  d(m' - m)(y) \right. \\
            & \left. - \int_{\mathbb{T}^n} \frac{\delta U}{\delta m}(m, y)\,  d(m' - m)(y) - s \int_{\mathbb{T}^n} \frac{\delta U}{\delta m}((1 - s)m + s m'', y)  d(m - m'')(y) \right] \\
            = & - \lim_{s \to 0} \frac{1}{s} \left[ \int_{\mathbb{T}^n} \frac{\delta U}{\delta m}((1 - s)m + s m'', y)\,  d(m' - m)(y) - \int_{\mathbb{T}^n} \frac{\delta U}{\delta m}(m, y)\,  d(m' - m)(y) \right] \\
            = & - \int_{\mathbb{T}^n} \int_{\mathbb{T}^n} \frac{\delta^2 U}{\delta m^2}(m, y, y')  d(m' - m)(y)\,  d(m'' - m)(y'),
        \end{aligned}
    \end{equation*}
   which implies that 
    \begin{equation}
        \frac{\delta U^{(1)}}{\delta m}(m, y) = - \int_{\mathbb{T}^n} \frac{\delta^2 U}{\delta m^2}(m, y, y')\,  d(m' - m)(y').
    \end{equation}
    By definition \eqref{eq:def u^{1}}, we have $U^{(1)}(m') = 0$. Then
    \begin{equation}
        \begin{aligned}
          &  U^{(1)}(m)  = -[U^{(1)}(m') - U^{(1)}(m)] \\
            & = \int_0^1 \int_{\mathbb{T}^n} \int_{\mathbb{T}^n} \frac{\delta^2 U}{\delta m^2}((1 - s)m + s m', y, y')\,  d(m' - m)(y)  d(m' - m)(y')  ds.
        \end{aligned}
    \end{equation}
    Therefore,
    \begin{equation}
        \begin{aligned}
            & U(m') =  U(m) + \int_{\mathbb{T}^n} \frac{\delta U}{\delta m}(m, y)\,  d(m' - m)(y) \\
            & + \int_0^1 \int_{\mathbb{T}^n} \int_{\mathbb{T}^n} \frac{\delta^2 U}{\delta m^2}((1 - s)m + s m', y, y')\,  d(m' - m)(y)  d(m' - m)(y')  ds.
        \end{aligned}
    \end{equation}
Using notation $U^{(0)}(m)\equiv U(m)$, we define
    \begin{equation}
    \begin{split}
       & U^{(N)}(m) = U(m') - U(m)\\
        & - \sum_{k=1}^{N-1} \frac{1}{k!} \left( \int_{\mathbb{T}^n} \right)^k \frac{\delta^k U}{\delta m^k}(m, y_0, \dots, y_{k-1}) \prod_{j=0}^{k-1}\, d(m' - m)(y_j).
        \end{split}
    \end{equation}
By a similar argument, we have $\eqref{taylor-equation}$ holds.
\end{proof}

\begin{definition}\label{def:analytic}
    A function $U: \mathcal{P}(\mathbb{T}^n) \to \mathbb{R}$ is analytic if it is smooth and
    \begin{equation}
    \begin{split}
        \lim_{N \to \infty} \frac{1}{(N-1)!} \int_0^1 \left( \int_{\mathbb{T}^n} \right)^N &  \frac{\delta^N U}{\delta m^N}((1 - s)m + s m', y, y_1, \dots, y_{N-1})\\
    &      \prod_{k=0}^{N-1} d(m' - m)(y_k)  ds = 0
        \end{split}
    \end{equation}
    for all $m, m' \in \mathcal{P}(\mathbb{T}^n)$.
\end{definition}

\begin{remark}\label{rem:analytic}
We remark that the notion of analyticity on $\mathcal{P}(\mathbb{T}^n)$ is weaker than its classical counterpart on $\mathbb{R}^n$. Indeed, analytic functions on $\mathcal{P}(\mathbb{T}^n)$ can be explicitly constructed from analytic functions on $\mathbb{R}^n$; see Remark~\ref{rmk-analytic} below.
\end{remark}

We now introduce higher-order derivatives via mixed directional derivatives. 

\begin{definition}
    The second-order mixed directional derivative of $U: \mathcal{P}(\mathbb{T}^n) \to \mathbb{R}$ at $m_1$ in directions $(m_2 - m_1, m_3 - m_1)$ is defined as
    \begin{equation}
        \begin{aligned}
            U^{(2)}(m_1) \equiv & \lim_{s_1, s_2 \to 0} \frac{1}{s_1 s_2} [U(m_1 + s_1(m_2 - m_1) + s_2(m_3 - m_1)) \\
            & - U(m_1 + s_1(m_2 - m_1)) - U(m_1 + s_2(m_3 - m_1)) + U(m_1)]
        \end{aligned}
    \end{equation}
    for all $m_1, m_2, m_3 \in \mathcal{P}(\mathbb{T}^n)$.
\end{definition}

\begin{lemma}\label{mixed}
    If $U$ is of class $C^2$, then
    \begin{equation}
        U^{(2)}(m_1; m_2, m_3) = \int_{\mathbb{T}^n} \int_{\mathbb{T}^n} \frac{\delta^2 U}{\delta m^2}(m_1, x, y)  d(m_3 - m_1)(x)  d(m_2 - m_1)(y).
    \end{equation}
\end{lemma}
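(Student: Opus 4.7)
The plan is to express the four-term mixed second difference in the definition of $U^{(2)}(m_1)$ as an iterated integral against $\frac{\delta^2 U}{\delta m^2}$ via two successive applications of the fundamental theorem of calculus, and then identify the limit by continuity. Writing $\mu_1 := m_2 - m_1$ and $\mu_2 := m_3 - m_1$, I restrict to $s_1, s_2 \geq 0$ with $s_1 + s_2 \leq 1$, so that each of the four measures appearing in $U^{(2)}(m_1)$ takes the form $(1 - s_1 - s_2)m_1 + s_1 m_2 + s_2 m_3$ (or a coordinate restriction thereof) and lies in $\mathcal{P}(\mathbb{T}^n)$. The $C^1$ and $C^2$ hypotheses on $U$ are therefore applicable along the relevant paths.

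I regroup the four-term difference as
\[
D(s_1, s_2) := \bigl[U(m_1 + s_1\mu_1 + s_2\mu_2) - U(m_1 + s_1\mu_1)\bigr] - \bigl[U(m_1 + s_2\mu_2) - U(m_1)\bigr],
\]
and apply FTC in the $s_2$-parameter to each bracket, using Definition~\ref{def_der_1}. Each bracket rewrites as $\int_0^{s_2}\int_{\mathbb{T}^n} \frac{\delta U}{\delta m}(m_1 + s_1\mu_1 + t\mu_2, y)\,d\mu_2(y)\,dt$ and its $s_1 = 0$ analogue, respectively. Their difference is then an $s_1$-increment of $m \mapsto \frac{\delta U}{\delta m}(m, y)$, which by the $C^2$ hypothesis and a second FTC step in the $s_1$-parameter equals $\int_0^{s_1}\int_{\mathbb{T}^n} \frac{\delta^2 U}{\delta m^2}(m_1 + r\mu_1 + t\mu_2, y, y')\,d\mu_1(y')\,dr$. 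Combining the two steps, $D(s_1, s_2)$ becomes the iterated integral
\[
\int_0^{s_1}\!\int_0^{s_2}\!\int_{\mathbb{T}^n}\!\int_{\mathbb{T}^n} \frac{\delta^2 U}{\delta m^2}(m_1 + r\mu_1 + t\mu_2, y, y')\,d\mu_1(y')\,d\mu_2(y)\,dt\,dr.
\]
Dividing by $s_1 s_2$ and invoking the continuity of $\frac{\delta^2 U}{\delta m^2}$ (together with Fubini to reorder the spatial integrations in accordance with the lemma's statement) yields the claimed identity in the limit $s_1, s_2 \to 0$.

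The main obstacle is the justification of the FTC steps. Definition~\ref{def_der_1} is phrased for variations of the form $m + s(m'-m)$ between two probability measures, whereas the paths $t \mapsto m_1 + s_1\mu_1 + t\mu_2$ and $r \mapsto m_1 + r\mu_1 + t\mu_2$ carry a frozen displacement component that is not literally of that form. I would address this by extending the Lions derivative linearly to any signed-measure direction of zero total mass: for fixed $\mu$ with $\mu(\mathbb{T}^n) = 0$ and endpoints $m + r_1\mu,\, m + r_2\mu \in \mathcal{P}(\mathbb{T}^n)$, applying Definition~\ref{def_der_1} to this pair and reparameterizing the variation parameter yields the chain-rule identity $\frac{d}{dr}U(m + r\mu) = \int_{\mathbb{T}^n} \frac{\delta U}{\delta m}(m + r\mu, y)\,d\mu(y)$, from which the FTC formula follows by continuity of $\frac{\delta U}{\delta m}$. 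Restricting $s_1, s_2$ to be small enough that every measure on the integration contour remains in $\mathcal{P}(\mathbb{T}^n)$ then makes the two FTC applications rigorous, and the conclusion follows as sketched.
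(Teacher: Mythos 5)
Your proof is correct, and it reaches the same pairing of slots and directions as the lemma, but the execution differs from the paper's in a way worth noting. The paper computes the mixed limit iteratively: for fixed $s_1$ it sends $s_2\to 0$ using Definition~\ref{def_der_1} (equations \eqref{eq:lemma-mix-1}--\eqref{eq:lemma-mix-2}), reducing the expression to a difference quotient in $s_1$ of $\int \frac{\delta U}{\delta m}(m_1+s_1(m_2-m_1),x)\,d(m_3-m_1)(x)$, and then sends $s_1\to 0$ using the definition of $\frac{\delta^2 U}{\delta m^2}$. You instead derive an \emph{exact} representation of the four-term second difference as
\[
\int_0^{s_1}\!\!\int_0^{s_2}\!\!\int_{\mathbb{T}^n}\!\int_{\mathbb{T}^n}\frac{\delta^2 U}{\delta m^2}(m_1+r\mu_1+t\mu_2,y,y')\,d\mu_1(y')\,d\mu_2(y)\,dt\,dr
\]
via two applications of the fundamental theorem of calculus, and then pass to the limit by continuity of the kernel. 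This buys two things the paper leaves implicit: (i) you obtain the \emph{joint} limit $s_1,s_2\to 0$ required by the definition of $U^{(2)}$, whereas the paper's computation is literally an iterated limit; and (ii) you explicitly justify differentiating along paths whose direction $\mu_2=m_3-m_1$ (with a frozen displacement $s_1\mu_1$) is not of the form $m''-m$ covered verbatim by Definition~\ref{def_der_1}, by restricting to $s_1+s_2\le 1$ so all points are convex combinations in $\mathcal{P}(\mathbb{T}^n)$ and reparameterizing the segment between two admissible endpoints — precisely the gap the paper's step \eqref{eq:lemma-mix-1} glosses over. The price is a Fubini interchange and a mild tacit strengthening of the $C^2$ definition (joint continuity of $\frac{\delta^2 U}{\delta m^2}$ in $(m,y,y')$, or at least enough uniformity to pass limits under the spatial integrals), but the paper's own argument needs the same tacit uniformity, so your route is, if anything, the more careful of the two.
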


\begin{proof}
    Since 
    \begin{equation}\label{eq:lemma-mix-1}
        \begin{aligned}
            & \lim_{s_2 \to 0} \frac{U(m_1 + s_1(m_2 - m_1) + s_2(m_3 - m_1)) - U(m_1 + s_1(m_2 - m_1))}{s_2} \\
            = & \int_{\mathbb{T}^n} \frac{\delta U}{\delta m}(m_1 + s_1(m_2 - m_1), x)  d(m_3 - m_1)(x),
        \end{aligned}
    \end{equation}
    and
    \begin{equation}\label{eq:lemma-mix-2}
        \lim_{s_2 \to 0} \frac{-U(m_1 + s_2(m_3 - m_1)) + U(m_1)}{s_2} = -\int_{\mathbb{T}^n} \frac{\delta U}{\delta m}(m_1, x)  d(m_3 - m_1)(x), 
    \end{equation}
    equations \eqref{eq:lemma-mix-1} and \eqref{eq:lemma-mix-2} imply that
    \begin{equation}
        \begin{aligned}
            U^{(2)}(m_1; m_2, m_3) = & \lim_{s_1 \to 0} \frac{1}{s_1} \left[ \int_{\mathbb{T}^n} \frac{\delta U}{\delta m}(m_1 + s_1(m_2 - m_1), x)  d(m_3 - m_1)(x) \right. \\
            & \left. - \int_{\mathbb{T}^n} \frac{\delta U}{\delta m}(m_1, x)  d(m_3 - m_1)(x) \right] \\
            = & \int_{\mathbb{T}^n} \int_{\mathbb{T}^n} \frac{\delta^2 U}{\delta m^2}(m_1, x, y)  d(m_3 - m_1)(x)  d(m_2 - m_1)(y).
        \end{aligned}
    \end{equation}
\end{proof}

\begin{lemma}\label{mixed2}
    Suppose $U: \mathcal{P}(\mathbb{T}^n) \to \mathbb{R}$ is of class $C^k$ ($k \in \mathbb{N}$), and let $h_j = m_j - m$ for $j = 1, \dots, k$. The $k$-th order mixed directional derivative at $m$ is given by 
    \[
        \partial^{k}_{s_1 \cdots s_k} U(m) = \lim_{s_1, \dots, s_k \to 0} \frac{1}{s_1 \cdots s_k} \sum_{\varepsilon \in \{0,1\}^k} (-1)^{k - |\varepsilon|} U\left(m + \sum_{j=1}^k \varepsilon_j s_j h_j \right),
    \]
    where $\varepsilon = (\varepsilon_1, \dots, \varepsilon_k) \in \{0,1\}^k$ and $|\varepsilon| = \varepsilon_1 + \cdots + \varepsilon_k$. Moreover,
    \[
        \partial^{k}_{s_1 \cdots s_k} U(m) = \left( \int_{\mathbb{T}^n} \right)^k \frac{\delta^k U}{\delta m^k}(m, y_1, \dots, y_k)  dh_1(y_1) \cdots dh_k(y_k).
    \]
\end{lemma}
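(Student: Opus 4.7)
The plan is to prove the two equalities in Lemma \ref{mixed2} by induction on $k$, mirroring the argument for the $k=2$ case in Lemma \ref{mixed}. The base case $k=1$ is precisely the defining relation \eqref{derivation} for $\frac{\delta U}{\delta m}$, and the case $k=2$ is already established. For the inductive step, assume the formula holds at order $k-1$ for any $C^{k-1}$ functional; given $U$ of class $C^k$, the first task is to reduce the $k$-th order alternating sum to a $(k-1)$-th order one by peeling off the last variable.

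Concretely, I would split the sum $\sum_{\varepsilon \in \{0,1\}^k}(-1)^{k-|\varepsilon|}U\!\left(m+\sum_j \varepsilon_j s_j h_j\right)$ according to $\varepsilon_k \in \{0,1\}$. Using $(-1)^{k-(|\varepsilon'|+1)} = -(-1)^{k-|\varepsilon'|}$, this rewrites as
\[
\sum_{\varepsilon' \in \{0,1\}^{k-1}} (-1)^{k-1-|\varepsilon'|} \Bigl[\, U\!\Bigl(m+\!\!\sum_{j<k}\!\varepsilon_j s_j h_j + s_k h_k\Bigr) - U\!\Bigl(m+\!\!\sum_{j<k}\!\varepsilon_j s_j h_j\Bigr)\Bigr].
\]
Dividing by $s_k$ and letting $s_k\to 0$ (justified by $C^1$ regularity in $m$), each bracket converges to $W\!\left(m+\sum_{j<k}\varepsilon_j s_j h_j\right)$, where $W(m'):=\int_{\mathbb{T}^n}\frac{\delta U}{\delta m}(m',y_k)\,dh_k(y_k)$. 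Since $U$ is $C^k$, the functional $W$ is $C^{k-1}$ on $\mathcal{P}(\mathbb{T}^n)$, and differentiating under the integral (legitimate by continuity of $\frac{\delta^k U}{\delta m^k}$ and compactness of $\mathbb{T}^n$) yields
\[
\frac{\delta^{k-1}W}{\delta m^{k-1}}(m',y_1,\dots,y_{k-1}) = \int_{\mathbb{T}^n} \frac{\delta^k U}{\delta m^k}(m',y_1,\dots,y_{k-1},y_k)\,dh_k(y_k).
\]
Applying the inductive hypothesis to $W$ in the directions $h_1,\dots,h_{k-1}$ and invoking Fubini's theorem then delivers the claimed iterated integral representation of $\partial^k_{s_1\cdots s_k}U(m)$ in terms of $\frac{\delta^k U}{\delta m^k}$.

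The main obstacle is reconciling the \emph{iterated} limits (first $s_k\to 0$, then the remaining $s_j\to 0$ in the inductive hypothesis) with the \emph{joint} limit $(s_1,\dots,s_k)\to 0$ asserted in the statement. To bridge this I would introduce the auxiliary real-variable function $\phi(s_1,\dots,s_k):=U\!\left(m+\sum_j s_j h_j\right)$, defined on a neighborhood of the origin in $\mathbb{R}^k$ on which $m+\sum_j s_j h_j$ remains in $\mathcal{P}(\mathbb{T}^n)$. Iterating the definitions of $\frac{\delta^j U}{\delta m^j}$ together with continuity in $m$ shows $\phi \in C^k$ in the classical sense with
\[
\partial^k_{s_1\cdots s_k}\phi(s_1,\dots,s_k) = \Bigl(\int_{\mathbb{T}^n}\Bigr)^{k} \frac{\delta^k U}{\delta m^k}\!\Bigl(m+\!\sum_j s_j h_j,y_1,\dots,y_k\Bigr)\prod_{j=1}^{k} dh_j(y_j).
\]
The standard finite-difference characterization of mixed partial derivatives for $C^k$ functions on $\mathbb{R}^k$ then equates this partial at the origin with the joint limit of the alternating quotient, which completes the identification of both sides in Lemma \ref{mixed2}. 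This $C^k$-to-$\mathbb{R}^k$ reduction is also what guarantees that the iterative scheme used in the induction actually computes the joint limit rather than merely an iterated one.
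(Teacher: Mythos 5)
Your proposal is correct and follows essentially the same route as the paper: the paper omits the proof of Lemma~\ref{mixed2}, stating it repeats the reasoning of Lemma~\ref{mixed}, which is precisely your inductive step of splitting the alternating sum over $\varepsilon_k$, passing to the limit in $s_k$ first, and applying the lower-order result to the resulting functional $W$. Your extra reduction to the classical finite-difference characterization for the $C^k$ function $\phi(s_1,\dots,s_k)=U\bigl(m+\sum_j s_j h_j\bigr)$ only adds care on the iterated-versus-joint limit point that the paper leaves implicit, so it is a more complete version of the same argument rather than a different one.
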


The proof of Lemma \ref{mixed2} follows the reasoning similar to Lemma \ref{mixed} and thus is omitted.
\begin{remark}
    All definitions and results in this section extend naturally to a general bounded domain $\Omega \subset \mathbb{R}^n$. We will apply these results to bounded domains in Section~\ref{bvpmaster} without further justification.
\end{remark}

Finally, we introduce several standard function spaces on $\mathbb{R}^n$ that will be used throughout this work. In the following, we let $\Omega$ denote a bounded domain in $\mathbb{R}^n$ or $\mathbb{T}^n$. 

For $k \in \mathbb{N}$ and $0 < \alpha < 1$, the H\"older space $C^{k+\alpha}(\Omega)$ consists of functions in $C^k(\Omega)$ whose derivatives up to order $k$ are H\"older continuous with exponent $\alpha$. More precisely, for a multi-index $l = (l_1, l_2, \ldots, l_n) \in \mathbb{N}^n$ with $|l| \leq k$, where $D^l \equiv \partial_{x_1}^{l_1} \partial_{x_2}^{l_2} \cdots \partial_{x_n}^{l_n}$, we define the norm,  
\begin{equation*}
    \|\phi\|_{C^{k+\alpha}(\Omega)} \equiv \sum_{|l| \leq k} \|D^l \phi\|_{\infty} + \sum_{|l| = k} \sup_{x \neq y} \frac{|D^l \phi(x) - D^l \phi(y)|}{|x-y|^{\alpha}}.
\end{equation*}

For functions depending on both time and space variables, we define the parabolic H\"older space $C^{\frac{k+\alpha}{2}, k+\alpha}(\mathbb{R} \times \Omega)$ as the set of functions for which $D^l D_t^j \phi$ exist and are H\"older continuous with exponent $\alpha$ in space and $\frac{\alpha}{2}$ in time, for all multi-indices $l \in \mathbb{N}^n$ and $j \in \mathbb{N}$ satisfying $|l| + 2j \leq k$. The corresponding norm is given by 
\begin{equation*}
    \begin{aligned}
        \|\phi\|_{C^{\frac{k+\alpha}{2}, k+\alpha}(\mathbb{R} \times \Omega)} 
        &\equiv \sum_{|l| + 2j \leq k} \|D^l D_t^j \phi\|_{\infty} \\
        &\quad + \sum_{|l| + 2j = k} \sup_{\substack{t \in \mathbb{R} \\ x \neq y}} \frac{|D^l D_t^j \phi(t,x) - D^l D_t^j \phi(t,y)|}{|x-y|^{\alpha}} \\
        &\quad + \sum_{|l| + 2j = k} \sup_{\substack{t \neq t' \\ x \in \Omega}} \frac{|D^l D_t^j \phi(t,x) - D^l D_t^j \phi(t',x)|}{|t-t'|^{\alpha/2}}.
    \end{aligned}
\end{equation*}

Finally, we define the dual space $C^{-(k+\alpha)}(\Omega)$ as the continuous dual of $C^{k+\alpha}(\Omega)$, equipped with the norm,  
\begin{equation*}
    \|f\|_{C^{-(k+\alpha)}(\Omega)} \equiv \sup_{\substack{\phi \in C^{k+\alpha}(\Omega) \\ \|\phi\|_{C^{k+\alpha}(\Omega)} \leq 1}} \langle f, \phi \rangle.
\end{equation*}

\section{Inverse Problems for the Dynamic Programming Equation in Mean Field Control}
\label{ipmfcdpe}
In this section, we address inverse problems associated with the Dynamic Programming Equation (DPE) arising in Mean Field Control (MFC). The DPE constitutes an Eikonal-type equation formulated on the Wasserstein space, modeling a central planner who optimizes a collective cost for a large population of identical agents.

Consider a cost functional of the form, 
\[
J(\alpha) \equiv \int_{t}^{T} \mathbb{E}\left[ \Fcal(u, \Scal(X_u), \alpha_u(X_u)) \right] \diff u + G(\Scal(X_T)),
\]
where $\Scal(X_u) \in \Pcal(\T^n)$ denotes the law of the state process $X_u$, and $\Fcal, G$ are given functions. The value function $U(t, m)$ is defined as
\[
U(t, m) \equiv \inf_{\alpha} J(\alpha) \quad \text{subject to} \quad \Scal(X_t) = m.
\]
This value function satisfies the dynamic programming equation (DPE), 
\begin{equation}
\begin{cases}
-\partial_t U(t, m) + \Lcal\left( \dfrac{\delta U}{\delta m}, m \right) = F(t, m), \\
U(T, m) = G(m),
\end{cases}
\label{eq:mfc}
\end{equation}
where the operator $\Lcal: \Kcal \times \Pcal(\T^n) \to \mathbb{R}$ encodes the Hamiltonian structure, and the space $\Kcal$ is defined as
\[
\Kcal\equiv\left\{K: \Pcal(\T^n)\times \T^n\to\mathbb{R} \mid K \text{ is continuous and satisfies } \eqref{norm_tobe_0}\right\}.
\]
Here, $F$ represents the running cost, while the specific forms of $\Lcal$ and $F$ depend on the structure of $\Fcal$.

The derivation of this DPE follows from the dynamic programming principle applied to McKean-Vlasov optimal control problems. When the state process follows the dynamics, 
\[
\diff X_u = b(X_u, \Scal(X_u), \alpha_u) \diff u + \sigma(X_u, \Scal(X_u), \alpha_u) \diff W_u,
\]
with $b$ and $\sigma$ denoting the drift and diffusion coefficients respectively, 
$W_u$ being the standard Brownian motion, and $\alpha$ being the control process, the value function $U(t, m)$ satisfies the dynamic programming principle. Applying Itô's formula to the value function and taking its linear derivative (Lions derivative) with respect to the measure argument yields the DPE \eqref{eq:mfc}, where the Hamiltonian operator $\Lcal$ takes the form, 
\[
\Lcal\left( \frac{\delta U}{\delta m}, m \right) = \inf_{\alpha} \left\{ \int_{\T^n} \left[ \Fcal(x, m, \alpha(x)) + \mathcal{M}^{\alpha, m}\left[\frac{\delta U}{\delta m}\right](x) \right] \, dm(x) \right\},
\]
with $\mathcal{M}^{\alpha, m}$ being a second-order differential operator corresponding to the drift and diffusion terms of the dynamics.

Since it generalizes the classical Hamilton-Jacobi-Bellman framework to 
measure-valued dynamics, this equation captures macroscopic interactions in collective behavior. We refer the reader to \cite{carmona2018probabilistic,soner2024viscosity} for more details. It falls within the general formulation of \eqref{eq:ts1} with $S = \Lcal - F$.

The specific form of the operator $\Lcal$ depends on the structure of the cost function $\Fcal$ and the dynamics. In this work, we consider the case where $\Lcal$ takes the following specific form, 
\begin{equation}
    \Lcal\left(\frac{\delta U}{\delta m}, m\right) = -\int_{\T^n} \mathrm{div}(D_m U)(t, y)\, \diff m(y) + \int_{\T^n} \nabla f(y) \cdot D_m U(t, y)\, d m(y).
    \label{eq:operator_L}
\end{equation}
This choice corresponds to systems with linear drift term $\nabla f$ and constant diffusion (e.g., Brownian motion), where the cost function $\Fcal$ exhibits separable or convex structure (such as quadratic running costs) and $f$ is a potential function. In this case, the Hamiltonian simplifies to this form, where the first term arises from the Laplacian contribution of the diffusion, and the second term comes from the inner product between the drift and the derivative of the value function. This structure commonly appears in mean field control problems, particularly when the system is driven by gradient flows or when the cost function has linear dependencies (see Assumption 3.1 in \cite{soner2024viscosity}). We again refer to \cite{carmona2018probabilistic,soner2024viscosity} for detailed discussions.

 Throughout this section, we take $f \in C^{\infty}(\T^n)$ and focus on the case where the operator $\Lcal$ takes the form \eqref{eq:operator_L}. Indeed, our methodology extends naturally to scenarios with reduced regularity requirements on $f$ and more general forms of the operator $\mathcal{L}$. We present this specific example to demonstrate our approach with clarity and transparency.

Moreover, we require that \(F\) and \(G\) in \eqref{eq:mfc} satisfy the following regularity conditions. 

\begin{enumerate}
    \item \textbf{First-order regularity:} For \(\alpha \in (0,1)\),
        \begin{equation}
            \sup_{\substack{m \in \mathcal{P}(\mathbb{T}^n) \\ t \in [0,T]}} 
            \left( \|F(t, m)\| + 
            \left\| \frac{\delta F}{\delta m}(t, m, \cdot) \right\|_{2+\alpha} \right) 
            + \mathrm{Lip} \left( \frac{\delta F}{\delta m} \right) < \infty,
        \end{equation}
        where the Lipschitz constant is defined by
        \begin{equation}\label{eq:Lip_F}
            \mathrm{Lip} \left( \frac{\delta F}{\delta m} \right)(t) 
            \equiv \sup_{m_1 \neq m_2} [\mathfrak{D}(m_1, m_2)]^{-1} 
            \left\| \frac{\delta F}{\delta m}(t, m_1, \cdot) 
            - \frac{\delta F}{\delta m}(t, m_2, \cdot) \right\|_{1+\alpha}
        \end{equation}
        for all \(m_1, m_2 \in \mathcal{P}(\mathbb{T}^n)\).  
        Similarly, for \(G\),
        \begin{equation}
            \sup_{m \in \mathcal{P}(\mathbb{T}^n)} 
            \left( \|G(\cdot, m)\|_{\alpha} 
            + \left\| \frac{\delta G}{\delta m}(\cdot, m, \cdot) \right\|_{\alpha, 2+\alpha} \right)
            + \mathrm{Lip} \left( \frac{\delta G}{\delta m} \right) < \infty,
        \end{equation}
        where
        \begin{equation}\label{eq:Lip_G}
            \mathrm{Lip} \left( \frac{\delta G}{\delta m} \right)
            \equiv \sup_{m_1 \neq m_2} [\mathfrak{D}(m_1, m_2)]^{-1}
            \left\| \frac{\delta G}{\delta m}(\cdot, m_1, \cdot) 
            - \frac{\delta G}{\delta m}(\cdot, m_2, \cdot) \right\|_{\alpha, 1+\alpha}
        \end{equation}
        for all \(m_1, m_2 \in \mathcal{P}(\mathbb{T}^n)\).

    \item \textbf{Higher-order regularity:} For all \(k \in \mathbb{N}\),
        \begin{equation}
            \sup_{\substack{m \in \mathcal{P}(\mathbb{T}^n) \\ t \in [0,T]}} 
            \left\| \frac{\delta^k F}{\delta m^k}(t, m, y, y_1, \dots, y_{k-1}) \right\|_{(2+\alpha)^k} +\mathrm{Lip}\left(\frac{\delta^k F}{\delta m^k}\right)
            < \infty,
        \end{equation}
        and
        \begin{equation}
            \sup_{m \in \mathcal{P}(\mathbb{T}^n)} 
            \left\| \frac{\delta^k G}{\delta m^k}(m, y, y_1, \dots, y_{k-1}) \right\|_{(2+\alpha)^k} + +\mathrm{Lip}\left(\frac{\delta^k G}{\delta m^k}\right)
            < \infty.
        \end{equation}
\end{enumerate}

We remark in passing that the first condition concerns standard regularity requirements for \(F\), while the second imposes higher-order regularity conditions essential for our analysis.

\subsection{Well-posedness results for the forward DPE}

We begin with the following definition of a classical solution. 

\begin{definition}\label{definition-of-solution-1}
    A map $U: (0,T) \times \mathcal{P}(\mathbb{T}^n) \to \mathbb{R}$ is a \emph{classical solution} of equation \eqref{eq:mfc} if it satisfies that 
    \begin{enumerate}
        \item $U$ is continuous in all arguments (with respect to the $ \mathfrak{D}$ distance), and for each $m \in \mathcal{P}(\mathbb{T}^n)$, the mapping $t \mapsto U(t, m)$ belongs to $C^1((0,T) \times \mathcal{P}(\mathbb{T}^n))$;
        
        \item $U$ is of class $C^1$ with respect to the measure variable $m$, and its first-order derivative $\dfrac{\delta U}{\delta m}(t, m, y)$ is continuous in $t$ and $y$;
        
        \item $U$ satisfies equation \eqref{eq:mfc} pointwise.
    \end{enumerate}
\end{definition}

Associated with the DPE \eqref{eq:mfc}, we introduce the following auxiliary system for $u(t)$ and $m(t, x)$, 
\begin{equation}\label{corr.sys}
    \begin{cases}
        -\dfrac{d}{dt} u(t) = F(t, m(t,\cdot)), & \text{in } (t_0, T), \\
        \partial_t m - \Delta m - \nabla \cdot (m \nabla f) = 0, & \text{in } (t_0, T) \times \mathbb{T}^n, \\
        m(t_0, x) = m_0 (x), & \text{in } \mathbb{T}^n, \\
        u(T) = G(m_T), & \text{in } \mathbb{T}^n.
    \end{cases}  
\end{equation}

In system \eqref{corr.sys}, $m$ and $m_0$ are interpreted as density functions. For the initial data $m_0 \in \mathcal{P}(\mathbb{T}^n) \cap C^{\alpha}(\mathbb{T}^n)$, the well-posedness of \eqref{corr.sys} follows directly from the classical theory for heat-type equations\cite{evans2010pde}.

We note that while $F(t, m)$ in equation \eqref{eq:mfc} is defined on $[0,T] \times \mathcal{P}(\mathbb{T}^n)$, we extend its domain in \eqref{corr.sys} to include functions
\begin{equation*}
    \left\{ m(t,x): (0,T) \times \mathbb{T}^n \to \mathbb{R} \,\middle|\, \int_{\mathbb{T}^n} m(t,x)\, dx = 1 \text{ for all } t \in (0,T) \right\}.
\end{equation*}
This extension is well-defined since $m_0 \in \mathcal{P}(\mathbb{T}^n) \cap C^{\alpha}(\mathbb{T}^n)$.

The well-posedness of equation \eqref{eq:mfc} is established by the following result.

\begin{theorem}\label{one-to-one}
    Let $u(t)$ be the solution to system \eqref{corr.sys} with initial data $m_0\in \mathcal{P}(\mathbb{T}^n) \cap C^{\alpha}(\mathbb{T}^n)$. Define a function $U(t, m): (0,T) \times \mathcal{P}(\mathbb{T}^n) \to \mathbb{R}$ by $U(t_0, m_0) = u(t_0)$. Then $U(t, m)$ is the unique classical solution to equation \eqref{eq:mfc}.
\end{theorem}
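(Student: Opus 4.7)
The plan is to use the auxiliary system \eqref{corr.sys} as a method of characteristics for the DPE \eqref{eq:mfc}: the Fokker-Planck equation furnishes the characteristic curves in the Wasserstein space, and the backward ODE for $u$ propagates the value function along them. Since the Fokker-Planck equation here is linear and autonomous, both existence and uniqueness reduce to standard parabolic theory combined with chain-rule manipulations in the measure argument.

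\textbf{Step 1 (Existence and well-definedness of $U$).} For any $m_0 \in \mathcal{P}(\mathbb{T}^n) \cap C^{\alpha}(\mathbb{T}^n)$, classical parabolic theory yields a unique solution $m \in C^{\frac{2+\alpha}{2}, 2+\alpha}([t_0,T] \times \mathbb{T}^n)$ of the Fokker-Planck equation in \eqref{corr.sys}; the bound $\int_{\mathbb{T}^n} m(t,x)\, dx = 1$ is preserved, and $m(t, \cdot)$ remains a probability density (non-negativity is preserved by the maximum principle applied after rewriting the drift term). The ODE for $u$ then admits the explicit representation
\begin{equation*}
    u(t_0) \;=\; G(m(T,\cdot)) \;+\; \int_{t_0}^T F(s, m(s,\cdot))\, ds,
\end{equation*}
which defines $U(t_0, m_0) \equiv u(t_0)$ unambiguously on $(0,T) \times \mathcal{P}(\mathbb{T}^n)$.

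\textbf{Step 2 (Regularity of $U$).} Continuity in $(t,m)$ with respect to the Wasserstein-type distance $\mathfrak{D}$ follows from parabolic stability estimates (linearity of the Fokker-Planck flow propagates $\mathfrak{D}$-closeness of initial data to $\mathfrak{D}$-closeness of solutions) combined with the Lipschitz hypotheses \eqref{eq:Lip_F}, \eqref{eq:Lip_G}. The time derivative $\partial_t U$ is obtained by exploiting the autonomous nature of the Fokker-Planck equation: writing $m(s, \cdot; t_0, m_0) = \tilde m(s - t_0, \cdot)$ where $\tilde m$ starts from $m_0$ at time $0$, differentiation in $t_0$ is direct. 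For the linear derivative $\frac{\delta U}{\delta m}$, I would exploit linearity of the Fokker-Planck flow: perturbing $m_0 \mapsto m_0 + s(m_0' - m_0)$ yields $m(\cdot) \mapsto m(\cdot) + s\, \eta(\cdot)$, where $\eta$ solves the same Fokker-Planck equation with initial data $m_0' - m_0$. Inserting this into the formula for $u(t_0)$ and passing to the limit as $s \to 0^+$ gives
\begin{equation*}
    \int_{\mathbb{T}^n} \frac{\delta U}{\delta m}(t_0, m_0, y)\, d(m_0' - m_0)(y)
    = \int_{\mathbb{T}^n} \frac{\delta G}{\delta m}(m_T, y)\, \eta(T, y)\, dy + \int_{t_0}^T \!\int_{\mathbb{T}^n} \frac{\delta F}{\delta m}(s, m(s,\cdot), y)\, \eta(s, y)\, dy\, ds.
\end{equation*}
Standard duality (adjoint Fokker-Planck/heat representation) extracts an explicit, continuous kernel $\frac{\delta U}{\delta m}(t_0, m_0, y)$ satisfying \eqref{norm_tobe_0}, and its continuity in $t_0$ and $y$ reduces to interior Schauder estimates.

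\textbf{Step 3 (Verification of the PDE).} Along any characteristic $s \mapsto m(s, \cdot)$ starting from $m_0$ at time $t_0$, the construction gives $U(s, m(s,\cdot)) = u(s)$, hence differentiating in $s$:
\begin{equation*}
    \partial_t U(s, m(s,\cdot)) + \int_{\mathbb{T}^n} \frac{\delta U}{\delta m}(s, m(s,\cdot), y)\, \partial_s m(s,y)\, dy \;=\; -F(s, m(s,\cdot)).
\end{equation*}
Substituting $\partial_s m = \Delta m + \nabla \cdot (m\, \nabla f)$ and integrating by parts in $y$ — using $D_m U = D_y \frac{\delta U}{\delta m}$ and $\mathrm{div}(D_m U) = \Delta_y \frac{\delta U}{\delta m}$ — converts the coupling term precisely into $\mathcal{L}\bigl(\frac{\delta U}{\delta m}, m(s,\cdot)\bigr)$ as given by \eqref{eq:operator_L}. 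Evaluating at $s = t_0$ (where $m(t_0, \cdot) = m_0$ is arbitrary) yields \eqref{eq:mfc} pointwise, and the terminal condition $U(T, m) = G(m)$ is immediate from $u(T) = G(m_T)$.

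\textbf{Step 4 (Uniqueness).} Given any classical solution $\tilde U$ and any $m_0 \in \mathcal{P}(\mathbb{T}^n) \cap C^{\alpha}(\mathbb{T}^n)$, let $m(\cdot)$ be the Fokker-Planck solution starting from $m_0$ at $t_0$, and set $V(s) \equiv \tilde U(s, m(s,\cdot))$. The chain rule together with the same integration-by-parts calculation as in Step~3 gives $V'(s) = -F(s, m(s,\cdot))$ with $V(T) = G(m(T,\cdot))$, so $V(t_0) = u(t_0) = U(t_0, m_0)$ and $\tilde U = U$. The main obstacle is ensuring that these chain-rule manipulations on $\mathcal{P}(\mathbb{T}^n)$ are rigorous — specifically, justifying the interchange of limits when computing $\frac{\delta U}{\delta m}$ and verifying that the integration-by-parts argument is legitimate along the characteristic flow — but this is precisely where the regularity hypotheses on $F$, $G$, and $f$, together with the Schauder theory for the Fokker-Planck equation, enter decisively.
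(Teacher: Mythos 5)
Your proposal is correct and follows the same overall architecture as the paper: represent $U$ along the Fokker--Planck characteristics, verify the PDE by the chain rule in the measure variable, and prove uniqueness by evaluating an arbitrary classical solution along the flow and invoking uniqueness for the auxiliary system \eqref{corr.sys}; your Steps 3 and 4 are essentially the paper's existence and uniqueness arguments. Where you genuinely diverge is in the key regularity input, Theorem~\ref{diff_wrt_m}: the paper proves it in Subsection~\ref{differentiability of U} via the linearized system \eqref{corr.sys.linear}, the distributional Fokker--Planck theory of Lemma~\ref{FP-eq-wp}, the fundamental-solution construction of Lemma~\ref{fun-solution}, and a second-order Taylor remainder estimate of size $[\mathfrak{D}(m_0^1,m_0^2)]^2$, whereas you exploit the exact linearity of the Fokker--Planck flow: the perturbed trajectory is literally $m+s\eta$ with $\eta$ solving the same equation from initial datum $m_0'-m_0$ (equivalently, $m+s\eta=(1-s)m+sm'$ with $m'$ the flow from $m_0'$), so the one-sided directional derivative required by Definition~\ref{def_der_1} follows by differentiating $G$ and $F$ along a genuine segment in $\mathcal{P}(\mathbb{T}^n)$, with no remainder estimate for the flow. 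This is a real simplification, and it is legitimate precisely because the drift $\nabla f$ is independent of $m$ and of $u$ (it would not survive in the MFG setting of Section~\ref{IPMFGMaster}); what it buys is brevity, while the paper's remainder argument is the template that generalizes. Two small points still deserve a word: (i) your appeal to ``standard duality'' to extract a kernel continuous in $(t_0,m_0,y)$ and normalized as in \eqref{norm_tobe_0} is exactly where the content of Lemma~\ref{fun-solution} sits, and it is also what is needed to define $U$ and verify the equation for general $m_0\in\mathcal{P}(\mathbb{T}^n)$ rather than only $C^\alpha$ densities --- evaluating your chain-rule identity at $s=t_0$ requires $m_0$ regular enough that $\partial_s m(t_0^+)$ exists classically, after which arbitrary measures are reached by the density/continuity argument the paper invokes; (ii) with the sign convention of \eqref{eq:operator_L} the coupling term $\int_{\mathbb{T}^n}\frac{\delta U}{\delta m}\,\partial_s m\,dy$ equals $-\mathcal{L}$ rather than $\mathcal{L}$, which is what yields $-\partial_t U+\mathcal{L}=F$; since the paper itself uses two different sign conventions for $\mathcal{L}$ in different places, this is bookkeeping rather than a substantive gap.
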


Prior to proving Theorem~\ref{one-to-one}, we state the following regularity result. 

\begin{theorem}\label{diff_wrt_m}
    Let $m_0 \in \mathcal{P}(\mathbb{T}^n)$. The mapping $U(t, m)$ defined in Theorem~\ref{one-to-one} is of class $C^1$ with respect to the measure variable. Moreover, the first-order derivative $\dfrac{\delta U}{\delta m}(t, m, y)$ is continuous in $t$ and $y$.
\end{theorem}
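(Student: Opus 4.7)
The plan is to leverage the linearity of the Fokker--Planck flow in its initial datum. Let $S_\tau$ denote the solution operator for $\partial_t m - \Delta m - \nabla\cdot(m\nabla f)=0$ on $\mathbb{T}^n$; standard parabolic theory makes $S_\tau$ a strongly continuous semigroup preserving $\mathcal{P}(\mathbb{T}^n)\cap C^\alpha(\mathbb{T}^n)$. Theorem~\ref{one-to-one} then yields the explicit representation
\begin{equation*}
U(t_0,m_0) \;=\; G\bigl(S_{T-t_0}m_0\bigr) \;+\; \int_{t_0}^T F\bigl(s,\,S_{s-t_0}m_0\bigr)\,ds,
\end{equation*}
reducing the problem to differentiating each factor on the right.

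First, for $m_0,m_0'\in \mathcal{P}(\mathbb{T}^n)\cap C^\alpha(\mathbb{T}^n)$ I set $m_0^\varepsilon:=m_0+\varepsilon(m_0'-m_0)$. By linearity, $S_\tau m_0^\varepsilon = S_\tau m_0 + \varepsilon S_\tau(m_0'-m_0)$. Combining this with the $C^1$-regularity of $G$ and $F$ and the Lipschitz bounds \eqref{eq:Lip_G}, \eqref{eq:Lip_F}, I would establish
\begin{equation*}
\begin{aligned}
\lim_{\varepsilon\to 0^+}\frac{U(t_0,m_0^\varepsilon)-U(t_0,m_0)}{\varepsilon}
&= \int_{\mathbb{T}^n}\frac{\delta G}{\delta m}\bigl(S_{T-t_0}m_0,y\bigr)\,d\bigl[S_{T-t_0}(m_0'-m_0)\bigr](y)\\
&\quad+\int_{t_0}^T\!\int_{\mathbb{T}^n}\frac{\delta F}{\delta m}\bigl(s,S_{s-t_0}m_0,y\bigr)\,d\bigl[S_{s-t_0}(m_0'-m_0)\bigr](y)\,ds,
\end{aligned}
\end{equation*}
where the Taylor remainder is controlled uniformly in $\varepsilon$ via a $\mathfrak{D}$-contraction estimate for $S_\tau$ acting on signed measures.

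Second, to convert this linear functional of $S_\tau(m_0'-m_0)$ into one of $(m_0'-m_0)$ itself, I would introduce the backward adjoint problem
\begin{equation*}
\begin{cases}
-\partial_s V - \Delta V + \nabla f\cdot\nabla V \;=\; \dfrac{\delta F}{\delta m}\bigl(s,S_{s-t_0}m_0,\cdot\bigr), & (s,x)\in(t_0,T)\times\mathbb{T}^n,\\[4pt]
V(T,\cdot) \;=\; \dfrac{\delta G}{\delta m}\bigl(S_{T-t_0}m_0,\cdot\bigr).
\end{cases}
\end{equation*}
Classical parabolic Schauder theory, combined with the regularity hypotheses on $F$ and $G$, yields a unique $V\in C^{1+\alpha/2,\,2+\alpha}([t_0,T]\times\mathbb{T}^n)$. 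Pairing this equation with the forward evolution $S_{s-t_0}(m_0'-m_0)$ and integrating by parts in $s$ from $t_0$ to $T$ collapses the two terms above into a single boundary pairing, giving
\begin{equation*}
\lim_{\varepsilon\to 0^+}\frac{U(t_0,m_0^\varepsilon)-U(t_0,m_0)}{\varepsilon} \;=\; \int_{\mathbb{T}^n} V(t_0,x)\,d(m_0'-m_0)(x).
\end{equation*}
Comparing with Definition~\ref{def_der_1} and enforcing the normalization \eqref{norm_tobe_0} identifies
\begin{equation*}
\frac{\delta U}{\delta m}(t_0,m_0,x) \;=\; V(t_0,x) \;-\; \int_{\mathbb{T}^n}V(t_0,y)\,dm_0(y).
\end{equation*}

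Continuity of $(t,y)\mapsto \delta U/\delta m(t,m,y)$ then follows from the parabolic $C^{1+\alpha/2,2+\alpha}$-regularity of $V$, while continuity of the kernel $(m,y)\mapsto \delta U/\delta m(t,m,y)$ demanded by Definition~\ref{def_der_1} reduces to continuous dependence of $V$ on the data, which follows from linearity of $S_\tau$ together with well-posedness and stability of the adjoint problem under the Lipschitz hypotheses on $\delta F/\delta m$ and $\delta G/\delta m$. The main technical obstacle I anticipate is the uniform Taylor-remainder estimate in the first step: this requires a quantitative $\mathfrak{D}$-bound for the action of $S_\tau$ on signed measures, which must be produced by duality with $C^1$ test functions evolved backward through the adjoint semigroup, so that the Lipschitz bounds \eqref{eq:Lip_F}--\eqref{eq:Lip_G} can be applied along the entire perturbation curve $m_0^\varepsilon$.
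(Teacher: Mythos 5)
Your proposal is correct but takes a genuinely different and more elementary route than the paper. The paper does not use the closed-form representation $U(t_0,m_0)=G(S_{T-t_0}m_0)+\int_{t_0}^T F(s,S_{s-t_0}m_0)\,ds$. Instead, it works abstractly with the linearized MFC system \eqref{corr.sys.linear}: it first establishes a fundamental-solution result (Lemma~\ref{fun-solution}, built on the $C^{-(1+\alpha)}$ well-posedness of Lemma~\ref{FP-eq-wp} applied to $\mu_0=\delta_y$ and $\partial_{y_j}\delta_y$), then compares two full solutions $(u_1,m_1)$ and $(u_2,m_2)$ of \eqref{corr.sys} against the linearization around $(u_2,m_2)$, and bounds the remainder $z=u_1-u_2-v$ by $C[\mathfrak{D}(m_0^1,m_0^2)]^2$ via the second-order Taylor expansion of $F$ and $G$ (Theorem~\ref{taylor}). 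Your scheme instead exploits, from the outset, the linearity of the Fokker--Planck flow $S_\tau$ in its initial datum, differentiates the explicit representation of $U$ term by term under the $C^1$-hypotheses on $F,G$, and then converts the resulting linear functional of $S_\tau(m_0'-m_0)$ into a functional of $m_0'-m_0$ itself by pairing with the solution $V$ of a single backward adjoint parabolic equation and integrating by parts. This yields the kernel $V(t_0,\cdot)$ directly, with continuity coming straight from Schauder estimates, and avoids the distributional $C^{-(1+\alpha)}$ framework and the quadratic Taylor-remainder estimate entirely. What your argument buys is transparency and a shorter path, leaning hard on the fact that the flow $S_\tau$ is linear in this MFC model; what the paper's argument buys is robustness -- the Taylor-remainder/linearization-comparison template generalizes to genuinely nonlinear measure dynamics (as used in Section~5 for the MFG Master Equation, where no such linear semigroup representation of $U$ exists), and its fundamental-solution construction is what later produces the formula \eqref{key} in exactly the form used by the progressive variational scheme. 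Both approaches rely on the same underlying ingredient (a $\mathfrak{D}$-stability bound for the FP flow by duality against $C^1$ test data), and both should be completed by noting that one first proves the result for $m_0\in\mathcal{P}(\mathbb{T}^n)\cap C^\alpha(\mathbb{T}^n)$ and then extends by density and the uniform bound on the kernel.
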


The proof of Theorem~\ref{diff_wrt_m} is postponed to Subsection~\ref{differentiability of U}, and we first present the proof of Theorem~\ref{one-to-one}.

\begin{proof}[Proof of Theorem \ref{one-to-one}]
    \textbf{Existence.} We first assume $m_0\in\mathcal{P}(\mathbb{T}^n)\cap C^{\infty}(\mathbb{T}^n)$. Let $(u,m)$ be the solution of $\eqref{corr.sys}$ with initial data $m(t_0,x)=m_0(x)$. We begin by computing the time derivative of $U(t,m)$ at $(t_0,m_0)$, 
    \begin{equation}\label{eq:p U-1}
        \begin{aligned}
           & \partial_t U(t_0,m_0) = \lim_{h \to 0} \frac{U(t_0+h, m_0) - U(t_0, m_0)}{h} \\
            &= \lim_{h \to 0} \frac{U(t_0+h, m_0) - U(t_0+h, m(t_0+h,\cdot)) + U(t_0+h, m(t_0+h,\cdot)) - U(t_0, m_0)}{h}.
        \end{aligned}
    \end{equation}
    
    By definition of $U(t, m)$, the second-difference quotient satisfies that 
    \begin{equation}\label{eq:p U-2}
    \begin{aligned}
         \lim_{h \to 0} \frac{U(t_0+h, m(t_0+h,\cdot)) - U(t_0, m_0)}{h} &= \lim_{h \to 0} \frac{u(t_0+h) - u(t_0)}{h} \\
         &= (\frac{d}{dt}u)(t_0)=F\left(t_0,m(t_0,\cdot)\right).
    \end{aligned} 
    \end{equation}
    
    For the first-difference quotient, Theorem~\ref{diff_wrt_m} ensures that $U$ is of class $C^1$ with respect to $m$. Applying Definitions~\eqref{derivation} and~\eqref{intrinsic_derivative}, we obtain 
    \begin{equation}\label{eq:p U-3}
        \begin{aligned}
            & \lim_{h \to 0} \frac{U(t_0+h, m_0) - U(t_0+h, m(t_0+h,\cdot))}{h} \\
            =& \lim_{h \to 0} \int_0^1 \int_{\mathbb{T}^n} \frac{\delta U}{\delta m}(t_0+h, (1-s)m(t_0) + s m(t_0+h), y) \left[ \frac{m(t_0+h,y) - m(t_0,y)}{h} \right] dy\, ds \\
            =& \int_0^1 \int_{\mathbb{T}^n} \frac{\delta U}{\delta m}(t_0, m(t_0), y) \partial_t m(t_0,y)\, dy\, ds \\
            =& \int_{\mathbb{T}^n} \frac{\delta U}{\delta m}(t_0, m_0, y) \left[ \Delta m(t_0,y)+ \nabla \cdot (m(t_0,y)\nabla f) \right] dy \\
            =& \int_{\mathbb{T}^n} \mathrm{div}(D_m U)(t_0, y) m(t_0,y)\, dy - \int_{\mathbb{T}^n} \nabla f(y) \cdot D_m U(t_0, y) m(t_0,y)\, dy.
        \end{aligned}
    \end{equation}
    
    Furthermore, by definition of $u(t,x)$, we have $U(T, m_0) = \overline{u}(T, x)$, where $\overline{u}$ satisfies 
    \begin{equation}
        \begin{cases}
            \overline{m}(T, x) = m_0(x),  \, x\in\mathbb{T}^n,\\
            \overline{u}(T) = G(\overline{m}_T),
        \end{cases}  
    \end{equation}
for some function $\overline{m}$. Hence, $U(T, m_0) = \overline{u}(T) = G(\overline{m}_T) = G(m_0)$.
    
    Using the results in ~\eqref{eq:p U-1},~\eqref{eq:p U-2}, and ~\eqref{eq:p U-3}, we conclude that $U(t_0, m_0)$ satisfies 
    equation~\eqref{eq:mfc} for all $t_0\in(0,T)$ and $m_0\in\mathcal{P}(\mathbb{T}^n)\cap C^{\infty}(\mathbb{T}^n)$; by a density argument, it holds for all $m_0\in\mathcal{P}(\mathbb{T}^n)$.
    
    \medskip
    
    \textbf{Uniqueness.} Let $V$ be another solution of equation~\eqref{eq:mfc}. For given $t_0$ and $m_0\in \mathcal{P}(\mathbb{T}^n)$, let $m(t, x)$ solve 
    \begin{equation}
        \begin{cases}
            \partial_t m - \Delta m + \nabla \cdot (m \nabla f) = 0, & \text{in } (t_0, T) \times \mathbb{T}^n, \\
            m(t_0, \cdot) = m_0(\cdot), & \text{in } \mathbb{T}^n.
        \end{cases}
    \end{equation}
    
    Define $v(t) \equiv V(t, m(t, \cdot))$ and $m(t) \equiv m(t, \cdot)$. Then, we have 
    \begin{equation*}
        \begin{aligned}
            \frac{d}{dt} v(t) &= \partial_t V(t, m(t, x)) + \int_{\mathbb{T}^n} \frac{\delta V}{\delta m}(t, m, y) \partial_t m(t,y)\, dy \\
            &= \partial_t V(t, m(t, x)) + \int_{\mathbb{T}^n} \frac{\delta V}{\delta m}(t, m, y) \left[ \Delta m +\nabla \cdot (m \nabla f) \right] dy \\
            &= \partial_t V(t, m(t, x)) - \mathcal{L}\left( \frac{\delta V}{\delta m}, m \right) \\
            &= -F(t, m).
        \end{aligned}
    \end{equation*}
    
    Since $v(T) = V(T, m(T, x)) = G(m(T, x))$, the pair $(v(t), m)$ also solves system~\eqref{corr.sys}. By uniqueness for that system, we conclude that $v(t) = u(t)$, and hence $U(t, m) = V(t, m)$.
\end{proof}

\subsection{Differentiability of $U$ with respect to the measure}\label{differentiability of U}

Let us fix $(t_0, m_0) \in [0,T] \times \mathcal{P}(\mathbb{T}^n)$ and let $(u, m)$ be the solution of the system \eqref{corr.sys} with the initial condition $m(t_0, \cdot) = m_0$. For a given $\mu_0 \in C^{-(1+\alpha)}(\mathbb{T}^n)$, we consider the following linearized system around $(u,m)$, 
\begin{equation}\label{corr.sys.linear}
    \begin{cases}
       \displaystyle{ -\frac{d}{dt} v(t) = \frac{\delta F}{\delta m}(t, m(t))(\mu(t)),} & \text{in } (t_0, T), \smallskip\\
        \partial_t \mu - \Delta \mu - \nabla \cdot (\mu \nabla f) = 0, & \text{in } (t_0, T) \times \mathbb{T}^n, \\
        \mu(t_0, \cdot) = \mu_0(\cdot), & \text{in } \mathbb{T}^n,\smallskip \\
       \displaystyle{ v(T) = \frac{\delta G}{\delta m}(m(T))(\mu(T)), } & \text{in } \mathbb{T}^n.
    \end{cases}
\end{equation}
Here, we use the notation
\begin{equation}
    \frac{\delta F}{\delta m}(t, m(t))(\mu(t)) \equiv \int_{\mathbb{T}^n} \frac{\delta F}{\delta m}(t, m(t), y) \mu(t, y)\,  dy,
\end{equation}
with an analogous definition for $G$.

In this subsection, we aim to show that the derivative $\frac{\delta U}{\delta m}$ exists and satisfies
\begin{equation}\label{key}
    \int_{\mathbb{T}^n} \frac{\delta U}{\delta m}(t, m, y) \mu_0(y)\,  dy = v(t).
\end{equation}

We begin by studying the well-posedness of the Fokker-Planck equation in the distributional sense.

\begin{lemma}\label{FP-eq-wp}
    Let $b \in [L^{\infty}(\mathbb{T}^n)]^n$ and $\mu_0 \in C^{-(1+\alpha)}(\mathbb{T}^n)$. Then there exists a unique distributional solution to
    \begin{equation}\label{FP_equation}
        \begin{cases} 
            \partial_t \mu - \Delta \mu - \nabla \cdot (b\mu) = 0, & \text{in } (t_0, T) \times \mathbb{T}^n, \\
            \mu(t_0, \cdot) = \mu_0(\cdot), & \text{in } \mathbb{T}^n.
        \end{cases}
    \end{equation}
    Moreover, there exists a constant $C > 0$ such that the solution satisfies
    \begin{equation}\label{ess-FP}
        \sup_{t \in [t_0, T]} \|\mu(t)\|_{-(1+\alpha)} \leq C \|\mu_0\|_{-(1+\alpha)}.
    \end{equation}
    Furthermore, if $\mu_0^1, \mu_0^2 \in \mathcal{P}(\mathbb{T}^n)$, then
    \begin{equation}\label{ess-FP2}
        \sup_{t \in [t_0, T]} \mathfrak{D}(\mu_1(t) , \mu_2(t)) \leq C \mathfrak{D}(\mu_0^1 , \mu_0^2),
    \end{equation}
    where $\mu_1$ and $\mu_2$ are solutions of \eqref{FP_equation} with initial conditions $\mu_0^1$ and $\mu_0^2$, respectively.
\end{lemma}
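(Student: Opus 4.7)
The plan is to construct $\mu$ by duality with the backward adjoint equation. For a fixed $t_1 \in (t_0, T]$ and any terminal data $\psi \in C^{1+\alpha}(\mathbb{T}^n)$, I first solve the backward parabolic problem
\[
-\partial_t \phi - \Delta \phi + b \cdot \nabla \phi = 0 \text{ on } (t_0, t_1) \times \mathbb{T}^n, \qquad \phi(t_1, \cdot) = \psi,
\]
which is the formal $L^2$-adjoint of the equation in \eqref{FP_equation}. Classical parabolic Schauder theory yields a unique solution satisfying $\|\phi\|_{C^{(1+\alpha)/2,\, 1+\alpha}([t_0, t_1] \times \mathbb{T}^n)} \leq C \|\psi\|_{C^{1+\alpha}(\mathbb{T}^n)}$ with $C = C(\|b\|_\infty, T)$. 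I then define $\mu(t_1) \in C^{-(1+\alpha)}(\mathbb{T}^n)$ via the pairing $\langle \mu(t_1), \psi \rangle := \langle \mu_0, \phi(t_0, \cdot) \rangle$, whose magnitude is bounded by $C \|\mu_0\|_{-(1+\alpha)} \|\psi\|_{1+\alpha}$ by the Schauder bound---this immediately delivers \eqref{ess-FP}.

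Next, I verify that the family $\{\mu(t)\}$ so produced is indeed a distributional solution of \eqref{FP_equation}. Testing against a smooth compactly supported $\eta$ on $(t_0, T) \times \mathbb{T}^n$, the duality construction combined with an approximation of $\mu_0$ by smooth densities $\mu_0^\varepsilon$---for which the integration-by-parts identity holds classically---transfers the weak formulation to the limit $\mu$ through the $C^{-(1+\alpha)}$ bound. Uniqueness follows similarly: if $\mu_0 \equiv 0$, then $\langle \mu(t_1), \psi \rangle = 0$ for every $\psi \in C^{1+\alpha}(\mathbb{T}^n)$, and since this space is dense in the predual of $C^{-(1+\alpha)}(\mathbb{T}^n)$, $\mu(t_1)$ must vanish as a distribution.

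For the Wasserstein estimate \eqref{ess-FP2}, I invoke the Kantorovich-Rubinstein form from Definition~\ref{W_distance} together with the same duality to write
\[
\mathfrak{D}(\mu_1(t), \mu_2(t)) = \sup_{\mathrm{Lip}(\psi) \leq 1} \langle \mu_0^1 - \mu_0^2, \phi^\psi(t_0, \cdot) \rangle,
\]
where $\phi^\psi$ is the backward solution with terminal data $\psi$. The argument reduces to the contractive Lipschitz bound $\mathrm{Lip}(\phi^\psi(t_0, \cdot)) \leq C \, \mathrm{Lip}(\psi)$. Differentiating the backward equation, the gradient $w := \nabla \phi$ solves a linear parabolic system with zeroth-order coefficient involving $Db$, and a maximum-principle (equivalently, Feynman-Kac) argument on the stochastic characteristics produces $\|w(t_0, \cdot)\|_\infty \leq e^{C(T-t_0)} \|\nabla \psi\|_\infty$, closing the proof.

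The main technical obstacle is establishing this Lipschitz contraction under the weak assumption $b \in L^\infty$, since the Bernstein-type differentiation formally requires pointwise control on $Db$. In the present application the drift is $b = -\nabla f$ with $f \in C^\infty(\mathbb{T}^n)$ (cf.\ \eqref{eq:operator_L}), so $Db$ is smooth and the gradient estimate is immediate; for a genuinely bounded measurable drift one instead invokes the Krylov-Safonov Hölder theory for linear parabolic equations with measurable coefficients, taking care that the resulting constants depend only on $\|b\|_\infty$ and $T$.
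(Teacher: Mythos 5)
Your proposal follows essentially the same route as the paper: both arguments hinge on duality with the backward adjoint problem, and both reduce \eqref{ess-FP2} to a Lipschitz bound on $\phi^\psi(t_0,\cdot)$ (the paper is terser, verifying \eqref{ess-FP} directly for smooth data and citing \cite{linear_in_Om} for the approximation details). Your construction of $\mu(t_1)$ via the pairing $\langle\mu(t_1),\psi\rangle:=\langle\mu_0,\phi(t_0,\cdot)\rangle$ is a reasonable way to make the existence part self-contained, and your adjoint equation has the correct sign for the divergence-form drift.

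However, two of your technical justifications do not hold under the stated hypothesis $b\in[L^{\infty}(\mathbb{T}^n)]^n$. First, classical Schauder theory requires H\"older-continuous coefficients and yields constants depending on a H\"older norm of $b$, so it does not give $\|\phi\|_{C^{(1+\alpha)/2,\,1+\alpha}}\le C(\|b\|_\infty,T)\|\psi\|_{1+\alpha}$ as claimed. Second, Krylov--Safonov theory provides only interior H\"older continuity with a small, dimension-dependent exponent; it does not produce the Lipschitz contraction $\mathrm{Lip}(\phi^\psi(t_0,\cdot))\le C\,\mathrm{Lip}(\psi)$ with a constant uniform up to the terminal time, so it is the wrong fallback for measurable drift. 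Both estimates are nevertheless true for merely bounded $b$: write the adjoint equation in mild form, $\phi(t)=e^{(t_1-t)\Delta}\psi+\int_t^{t_1}e^{(s-t)\Delta}\bigl(b\cdot\nabla\phi(s)\bigr)\,ds$, and combine the heat-semigroup smoothing bounds $\|\nabla e^{\tau\Delta}h\|_{\infty}\le C\tau^{-1/2}\|h\|_{\infty}$ and $\|e^{\tau\Delta}h\|_{1+\alpha}\le C\tau^{-(1+\alpha)/2}\|h\|_{\infty}$ with a singular Gronwall argument; this gives both the $C^{1+\alpha}$ bound behind \eqref{ess-FP} and the Lipschitz bound behind \eqref{ess-FP2} with constants depending only on $\|b\|_\infty$ and $T$. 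Finally, your uniqueness paragraph only shows that the solution you constructed is unique; to prove uniqueness of distributional solutions you must show that an \emph{arbitrary} distributional solution satisfies the duality identity, which requires justifying the use of the merely $C^{1+\alpha}$-regular adjoint solution as a test function (e.g.\ by mollifying $\phi$ or $b$ and passing to the limit). Also, $C^{1+\alpha}(\mathbb{T}^n)$ is the predual of $C^{-(1+\alpha)}(\mathbb{T}^n)$ by definition, so the density remark there is unnecessary.
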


\begin{proof}
    The estimate \eqref{ess-FP} can be directly verified when $\mu_0 \in C^{2+\alpha}(\mathbb{T}^n)$. For the general case, it follows by a standard approximation argument. We refer to \cite{linear_in_Om} for complete details.

    To establish \eqref{ess-FP2}, consider a test function $\phi$ satisfying
    \begin{equation}
        \begin{cases} 
            -\partial_t \phi - \Delta \phi - b \cdot \nabla \phi = 0, & \text{in } (t_0, t) \times \mathbb{T}^n, \\
            \phi(t, \cdot) = \psi (\cdot), & \text{in } \mathbb{T}^n,
        \end{cases}
    \end{equation}
    where $\psi \in W^{1,\infty}$ with $\text{Lip}(\psi) \leq 1$. By duality, we have
    \begin{equation}
        \int_{\mathbb{T}^n} \psi(x)  \mu_i(t, x)  dx = \int_{\mathbb{T}^n} \phi(t_0, x)  \mu_0^i(x)  dx, \quad i = 1, 2.
    \end{equation}
    Subtracting these identities yields
    \begin{equation}
        \begin{aligned}
            \int_{\mathbb{T}^n} \psi(x) & (\mu_1(t, x) - \mu_2(t, x))  dx \\
            &= \int_{\mathbb{T}^n} \phi(t_0, x) (\mu_0^1(x) - \mu_0^2(x))  dx \\
            &\leq C \mathfrak{D}(\mu_0^1 ,\mu_0^2),
        \end{aligned}
    \end{equation}
    where the inequality follows from the regularity of $\phi(t_0, \cdot)$. Taking the supremum over all $\psi \in W^{1,\infty}$ with $\text{Lip}(\psi) \leq 1$ and over $t \in [t_0, T]$, we obtain \eqref{ess-FP2}.
\end{proof}

As an immediate consequence, we have the following corollary. 
\begin{corollary}
    Under the assumptions of Lemma \ref{FP-eq-wp}, the system \eqref{corr.sys.linear} admits a unique solution $(v, \mu)$ satisfying
    \begin{equation}
     \|v(t)\|_{C^1([t_0,T])} + \sup_{t \in [t_0, T]} \|\mu(t)\|_{-(1+\alpha)} \leq C \|\mu_0\|_{-(1+\alpha)}.
    \end{equation}
\end{corollary}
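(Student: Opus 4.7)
The plan is to deduce the corollary directly from Lemma \ref{FP-eq-wp}, treating the two components of the system \eqref{corr.sys.linear} sequentially. First, since $f \in C^\infty(\mathbb{T}^n)$, the drift $b = \nabla f$ belongs to $[L^\infty(\mathbb{T}^n)]^n$, so I would invoke Lemma \ref{FP-eq-wp} on the Fokker--Planck component to obtain the unique existence of $\mu$ together with the bound
\[
\sup_{t \in [t_0, T]} \|\mu(t)\|_{-(1+\alpha)} \leq C \|\mu_0\|_{-(1+\alpha)},
\]
which already furnishes the second half of the claimed estimate.

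With $\mu$ determined, the equation for $v$ decouples into a scalar backward ODE whose unique solution is
\[
v(t) = \frac{\delta G}{\delta m}(m(T))(\mu(T)) + \int_t^T \frac{\delta F}{\delta m}(s, m(s))(\mu(s)) \, ds.
\]
Existence and uniqueness of $v$ are immediate from this integral representation, so what remains is to estimate $\|v\|_{C^1([t_0,T])}$. I would interpret each pairing $\frac{\delta F}{\delta m}(s, m(s))(\mu(s))$ and $\frac{\delta G}{\delta m}(m(T))(\mu(T))$ as the action of a distribution in $C^{-(1+\alpha)}(\mathbb{T}^n)$ against a test function. The first-order regularity hypotheses on $F$ and $G$ stated at the beginning of Section~\ref{ipmfcdpe} guarantee uniform bounds on $\frac{\delta F}{\delta m}(s, m(s), \cdot)$ and $\frac{\delta G}{\delta m}(m(T), \cdot)$ in $C^{2+\alpha}(\mathbb{T}^n)$, and the continuous embedding $C^{2+\alpha} \hookrightarrow C^{1+\alpha}$ then yields
\[
\left|\frac{\delta F}{\delta m}(s, m(s))(\mu(s))\right| + \left|\frac{\delta G}{\delta m}(m(T))(\mu(T))\right| \leq C\bigl(\|\mu(s)\|_{-(1+\alpha)} + \|\mu(T)\|_{-(1+\alpha)}\bigr).
\]
Combining this with \eqref{ess-FP} bounds both $|v(t)|$ and $\bigl|\tfrac{d}{dt} v(t)\bigr|$ by $C \|\mu_0\|_{-(1+\alpha)}$, and summing produces the asserted inequality.

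The only subtlety --- rather than a genuine obstacle --- is fixing the functional-analytic interpretation of the integrals appearing in \eqref{corr.sys.linear} when $\mu_0$ is merely a distribution. The expression $\int_{\mathbb{T}^n} \frac{\delta F}{\delta m}(t, m(t), y)\,\mu(t, y)\,dy$ must be understood as the $C^{1+\alpha}$--$C^{-(1+\alpha)}$ duality pairing rather than a Lebesgue integral, which is the natural reading in view of Lemma \ref{FP-eq-wp}. Once this identification is in place, the remaining work is just bookkeeping: existence comes from the representation formula, uniqueness follows because any two solutions share the same $\mu$ (by Lemma \ref{FP-eq-wp}) and hence the same $v$ (by ODE uniqueness), and all estimates reduce to routine applications of the standing regularity hypotheses on $F$ and $G$.
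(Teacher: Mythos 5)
Your argument is correct and matches what the paper intends by calling this ``an immediate consequence'': the system \eqref{corr.sys.linear} is triangular, so you first obtain $\mu$ uniquely from Lemma~\ref{FP-eq-wp} together with the estimate \eqref{ess-FP}, then recover $v$ by quadrature of the backward scalar ODE and bound both $v$ and $v'$ via the $C^{-(1+\alpha)}$--$C^{1+\alpha}$ duality pairing, using that the standing regularity hypotheses place $\frac{\delta F}{\delta m}(t,m,\cdot)$ and $\frac{\delta G}{\delta m}(m,\cdot)$ uniformly in $C^{2+\alpha}\hookrightarrow C^{1+\alpha}$. Your remark that the ``integrals'' in \eqref{corr.sys.linear} must be read as duality pairings when $\mu_0$ is only a distribution is exactly the right interpretation, and the remaining continuity needed for $v\in C^1$ follows routinely from the Lipschitz assumption \eqref{eq:Lip_F} on $\frac{\delta F}{\delta m}$ together with \eqref{ess-FP2}.
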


We now prove that the linearized system possesses a fundamental solution, which will be identified with the derivative $\frac{\delta U}{\delta m}$.

\begin{lemma}\label{fun-solution}
    Consider the system \eqref{corr.sys.linear}. There exists a function $S: [0, T] \times \mathcal{P}(\mathbb{T}^n) \times \mathbb{T}^n \to \mathbb{R}$ such that
    \begin{equation}\label{fun-solu-intergral}
        v(t_0) = \int_{\mathbb{T}^n} S(t_0, m_0, y) \mu_0(y)  dy
    \end{equation}
    for all $(t_0, m_0, \mu_0)$. Moreover, there exists a constant $C > 0$ such that
    \begin{equation}
        \sup_{(t, m) \in [0,T] \times \mathcal{P}(\mathbb{T}^n)} \|S(t, m, \cdot)\|_{1+\alpha} \leq C.
    \end{equation}
\end{lemma}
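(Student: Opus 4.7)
The plan is to construct $S$ as the solution of a \emph{dual backward parabolic equation} associated with the linearization~\eqref{corr.sys.linear}, and then read off the representation~\eqref{fun-solu-intergral} from a time-integrated duality identity. This both produces $S$ and automatically delivers its spatial regularity.

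Fix $(t_0,m_0)$ and let $(u,m)$ be the corresponding solution of~\eqref{corr.sys}. I would introduce $\phi : [t_0,T]\times\mathbb{T}^n\to\mathbb{R}$ as the unique classical solution of the backward linear parabolic equation
\begin{equation}\label{eq:prop-dual}
\begin{cases}
-\partial_s\phi - \Delta\phi + \nabla f\cdot\nabla\phi = \dfrac{\delta F}{\delta m}(s, m(s), z), & (s,z)\in(t_0,T)\times\mathbb{T}^n,\\
\phi(T,z) = \dfrac{\delta G}{\delta m}(m(T), z), & z\in\mathbb{T}^n,
\end{cases}
\end{equation}
and set $S(t_0,m_0,z) \equiv \phi(t_0,z)$. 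Because $f\in C^\infty(\mathbb{T}^n)$ and the first-order regularity hypotheses on $F$ and $G$ bound the $C^{2+\alpha}$ norms of the source and terminal datum uniformly in $s$ and $m_0$, standard parabolic Schauder theory yields $\phi\in C^{1+\alpha/2,\,2+\alpha}([t_0,T]\times\mathbb{T}^n)$ together with the uniform estimate
\begin{equation}\label{eq:prop-schauder}
\sup_{s\in[t_0,T]}\|\phi(s,\cdot)\|_{2+\alpha} \le C,
\end{equation}
with $C$ independent of $(t_0,m_0)$. The continuous embedding $C^{2+\alpha}\hookrightarrow C^{1+\alpha}$ immediately gives the claimed bound on $\|S(t,m,\cdot)\|_{1+\alpha}$.

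Next, let $(v,\mu)$ be the unique solution of~\eqref{corr.sys.linear}. To identify $v(t_0)$ with the pairing $\langle S(t_0,m_0,\cdot),\mu_0\rangle$, I compute $\frac{d}{ds}\int_{\mathbb{T}^n}\phi(s,z)\mu(s,z)\,dz$. On $\mathbb{T}^n$ (no boundary terms) the $\Delta\phi$ term integrated against $\mu$ cancels the $\phi\,\Delta\mu$ term, and the drift terms cancel pairwise after one integration by parts, leaving
\begin{equation*}
\frac{d}{ds}\int_{\mathbb{T}^n}\phi(s,z)\mu(s,z)\,dz = -\int_{\mathbb{T}^n}\frac{\delta F}{\delta m}(s,m(s),z)\,\mu(s,z)\,dz.
\end{equation*}
Integrating from $t_0$ to $T$, invoking the terminal condition in~\eqref{eq:prop-dual} and the ODE for $v$ in~\eqref{corr.sys.linear}, I obtain
\begin{equation*}
\int_{\mathbb{T}^n}\phi(t_0,z)\mu_0(z)\,dz = v(T) + \int_{t_0}^{T}\!\int_{\mathbb{T}^n}\frac{\delta F}{\delta m}(s,m(s),z)\,\mu(s,z)\,dz\,ds = v(t_0),
\end{equation*}
which is precisely~\eqref{fun-solu-intergral}. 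For smooth $\mu_0$ the computation is literal; for general $\mu_0\in C^{-(1+\alpha)}$ it is interpreted as the duality pairing between $\phi(t_0,\cdot)\in C^{1+\alpha}$ and $\mu_0$, extended by approximation using the estimate~\eqref{ess-FP} of Lemma~\ref{FP-eq-wp}.

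The main obstacle is making the Schauder constant in~\eqref{eq:prop-schauder} genuinely uniform in $(t_0,m_0)$. Pointwise spatial bounds follow immediately from the first-order regularity of $F$ and $G$, but Schauder theory also requires a H\"older seminorm in the time variable $s$ for the source $s\mapsto\frac{\delta F}{\delta m}(s,m(s),\cdot)$ and for the datum evaluated at $s=T$. This temporal regularity is supplied by coupling the Lipschitz hypotheses~\eqref{eq:Lip_F}--\eqref{eq:Lip_G} with the Wasserstein continuity of $s\mapsto m(s)$ coming from~\eqref{ess-FP2} of Lemma~\ref{FP-eq-wp}, after which classical parabolic Schauder estimates close the argument with constants depending only on $\alpha$, $T$, $f$, and the regularity constants of $F$ and $G$.
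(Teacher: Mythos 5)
Your argument is correct, but it takes a genuinely different route from the paper. The paper defines $S$ probabilistically in terms of the forward linearized system itself: it sets $S(t_0,m_0,y)=v(t_0;\delta_y)$, i.e.\ it runs \eqref{corr.sys.linear} with Dirac initial data, obtains \eqref{fun-solu-intergral} from linearity, and gets the $C^{1+\alpha}$ bound by pairing with $\partial_{y_j}\delta_y$, which is bounded in $C^{-(1+\alpha)}$, and invoking the stability estimate \eqref{ess-FP} of Lemma~\ref{FP-eq-wp}. You instead construct $S$ as the time-$t_0$ value of an explicit backward dual parabolic equation and recover \eqref{fun-solu-intergral} from the duality identity $\frac{d}{ds}\int\phi\,\mu\,dz=-\int\frac{\delta F}{\delta m}\,\mu\,dz$ (your sign bookkeeping for the adjoint drift $+\nabla f\cdot\nabla\phi$ is right, and indeed your $\phi(t_0,y)$ coincides with the paper's $v(t_0;\delta_y)$); the regularity then comes from parabolic estimates for the dual equation rather than from negative-H\"older stability of the Fokker--Planck flow. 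Your version is more constructive and makes the representation kernel explicit, at the cost of needing regularity theory for the dual equation; the paper's version piggybacks entirely on Lemma~\ref{FP-eq-wp}. Two caveats to tighten: (i) the full Schauder claim $\phi\in C^{1+\alpha/2,\,2+\alpha}$ requires H\"older-in-time regularity of the source $s\mapsto\frac{\delta F}{\delta m}(s,m(s),\cdot)$, and the standing hypotheses only give boundedness in $t$ plus Lipschitz dependence on $m$ (the explicit $t$-dependence is not assumed H\"older); this is harmless because the lemma only needs $\sup_s\|\phi(s,\cdot)\|_{1+\alpha}\le C$, which follows from the source being merely bounded in time with values in $C^{\alpha}$ (e.g.\ via mild-solution/semigroup estimates), so you should claim only that weaker bound; (ii) the time-continuity of $s\mapsto m(s)$ in the Wasserstein metric is standard for the heat-type equation in \eqref{corr.sys}, but it is not what \eqref{ess-FP2} states (that is stability in the initial datum), so the citation should be adjusted. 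With these adjustments your proof is a valid alternative to the paper's.
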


\begin{proof}
    Let $v(t; \mu_0)$ denote the solution of the first equation in \eqref{corr.sys.linear} with the initial condition $\mu(t_0) = \mu_0$. Define
    \begin{equation}
        S(t_0, m_0, y) = v(t_0; \delta_y),
    \end{equation}
    where $\delta_y$ denotes the Dirac measure centered at $y \in \mathbb{T}^n$.

    By the linearity of the system \eqref{corr.sys.linear}, we have
    \begin{equation}
        \frac{\partial S}{\partial y_j}(t_0, m_0, y) = v\left(t_0; -\frac{\partial \delta_y}{\partial y_j}\right),
    \end{equation}
    where the derivative of $\delta_y$ is understood in the distributional sense. Since $\frac{\partial \delta_y}{\partial y_j}$ is bounded in $C^{-(1+\alpha)}$, Lemma \ref{FP-eq-wp} implies that 
    \begin{equation}
        \sup_{(t, m)\in [0,T] \times \mathcal{P}(\mathbb{T}^n)} \|S(t, m, \cdot)\|_{1+\alpha} \leq C.
    \end{equation}

    The representation formula \eqref{fun-solu-intergral} follows directly from the linearity of the system \eqref{corr.sys.linear}.
\end{proof}

Now we are ready to prove Theorem \ref{diff_wrt_m}.

\begin{proof}[Proof of Theorem \ref{diff_wrt_m}]
    Let $m_0^1, m_0^2 \in C^\alpha \cap \mathcal{P}(\mathbb{T}^n)$. Fix $t_0 \in [0,T]$, and let $(u_1, m_1)$ and $(u_2, m_2)$ be solutions of the system \eqref{corr.sys} with initial conditions $m_1(t_0) = m_0^1$ and $m_2(t_0) = m_0^2$, respectively.
    
    Consider the linearized system around $(u_2, m_2)$ with the initial condition $\mu(t_0) = m_0^1 - m_0^2$, 
    \begin{equation}\label{corr.sys.linear2}
        \begin{cases}
          \displaystyle{  -\frac{d}{dt} v(t) = \frac{\delta F}{\delta m}(t, m_2(t))(\mu(t)),} &\text{ in } (t_0,T),\smallskip\\
            \partial_t \mu - \Delta \mu - \nabla \cdot (\mu \nabla f) = 0,  &\text{ in } (t_0,T)\times\mathbb{T}^n,\\
            \mu(t_0, \cdot) = m_0^1 - m_0^2,  &\text{ in } \mathbb{T}^n,\smallskip\\
           \displaystyle{ v(T, \cdot) = \frac{\delta G}{\delta m}(m_2(T))(\mu(T)),}&\text{ in } \mathbb{T}^n.
        \end{cases}  
    \end{equation}

    Let $(v, \mu)$ be the solution to \eqref{corr.sys.linear2}, and define the remainder terms
    \[
    (z, \rho) = (u_1 - u_2 - v, \ m_1 - m_2 - \mu).
    \]
    Then $(z, \rho)$ satisfies the system 
    \begin{equation}\label{eq:(z,rho)}
        \begin{cases}
           \displaystyle{ -\frac{d}{dt} z(t) = F(t, m_1) - F(t, m_2) - \frac{\delta F}{\delta m}(t, m_2(t))(\mu(t)),} &\text{ in } (t_0,T),\smallskip\\
            \partial_t \rho - \Delta \rho - \nabla \cdot (\rho \nabla f) = 0,  &\text{ in } (t_0,T)\times\mathbb{T}^n,\\
            \rho(t_0, \cdot) = 0,  &\text{ in } \mathbb{T}^n,\\
          \displaystyle{  z(T, x) = G(m_1(T)) - G(m_2(T)) - \frac{\delta G}{\delta m}(m_2(T))(\mu(T)),} &\text{ in } \mathbb{T}^n.
        \end{cases}
    \end{equation}

    Observing that $\mu = m_1 - m_2$, we apply Theorem \ref{taylor} to obtain 
    \begin{equation}\label{eq:A(t)}
       \begin{aligned}
     A(t)\equiv&F(t,m_1)-F(t,m_2)-\frac{\delta F}{\delta m}(t,m_2(t))(\mu)\\
   =&F(t,m_1)-F(t,m_2)-\int_{\mathbb{T}^n}\frac{\delta F}{\delta m}(t,m_2(t),y)(m_1-m_2)d(y)\\
   =&\int_0^1\int_{\mathbb{T}^n}\int_{\mathbb{T}^n}\frac{\delta^2 F}{\delta m^2}(  (1-s)m_2+m_1,y,y')(m_1(t,y)-m_2(t,y))\\
   &\hspace*{3cm}  dy(m_1(t,y')-m_2(t,y'))dy'ds.
\end{aligned}
    \end{equation}
    By the regularity assumptions on $F$ and ~\eqref{eq:A(t)}, we estimate 
    \begin{equation}\label{eq:ess-A}
        \begin{aligned}
            \|A(t)\|_\infty &\leq C \int_0^1 \int_{\mathbb{T}^n} \mathfrak{D}(m_1, m_2) \left\| D_{y'} \frac{\delta F}{\delta m}(t, m_2 + s(m_1 - m_2), y, \cdot) \right\|_{L^\infty(\mathbb{T}^n)}\\
            &\hspace*{3cm}   d(m_1 - m_2)(y)  ds \\
            &\leq C [\sup_{t \in [0,T]}\mathfrak{D}(m_1, m_2)]^2.
        \end{aligned}
    \end{equation}
    Similarly, for the terminal condition, we have 
    \begin{equation}\label{eq:ess-G}
        \left\| G(m_1(T)) - G(m_2(T)) - \frac{\delta G}{\delta m}(m_2(T))(\mu(T)) \right\|_\infty \leq C [\sup_{t \in [0,T]}\mathfrak{D}(m_1, m_2)]^2.
    \end{equation}
   Using the estimates ~\eqref{eq:ess-A} and ~\eqref{eq:ess-G} in ~\eqref{eq:(z,rho)}, we obtain the estimate for $z$,  
    \begin{equation}\label{eq:ess-z}
        \|z\|_\infty \leq C [\sup_{t \in [0,T]}\mathfrak{D}(m_1 , m_2)]^2.
    \end{equation}
   By definitions of $m_1(t)$ and $m_2(t)$ and using Lemma~\ref{FP-eq-wp}, we obtain the estimate
\begin{equation}\label{eq:ess:m12}
    \sup_{t \in [0,T]} \mathfrak{D}\bigl(m_1(t), m_2(t)\bigr) \leq C \, \mathfrak{D}(m_0^1, m_0^2).
\end{equation}
    By Lemma \ref{fun-solution} we have 
    \begin{equation}\label{eq:v(t_0)}
        v(t_0) = \int_{\mathbb{T}^n} S(t_0, m_2, y) (m_0^1 - m_0^2)(y)\,  dy.
    \end{equation}
   By definitions of $u_1$, $u_2$, and $\mu$, combining \eqref{eq:ess-z}, ~\ref{eq:ess:m12}, and \eqref{eq:v(t_0)} yields
    \begin{equation}
    \begin{split}
       & \left\| U(t_0, m_0^1) - U(t_0, m_0^2) - \int_{\mathbb{T}^n} S(t_0, m_2, y) (m_0^1 - m_0^2)(y)\,  dy \right\|_\infty\\
        \leq &\ C [\mathfrak{D}(m_0^1 ,m_0^2)]^2.
        \end{split}
    \end{equation}
    This establishes the differentiability of $U$ with respect to $m$, with
    \begin{equation}
        \frac{\delta U}{\delta m}(t, m, y) = S(t, m, y),
    \end{equation}
    up to an additive constant.
    
    Finally, we verify the normalization condition for $S$. Taking $\mu_0 = m_0$ in system \eqref{corr.sys.linear}, we observe that the solution is $(v, \mu) = (0, m)$, since  
    \[
    \frac{\delta F}{\delta m}(t, m(t))(m(t)) = 0 \quad \text{and} \quad \frac{\delta G}{\delta m}(m(T))(m(T)) = 0
    \]
    by definition of the derivative with respect to the measure variable.
 Therefore, by Lemma \ref{fun-solution}, we conclude that $S$ satisfies the normalization condition, 
    \begin{equation}
        \int_{\mathbb{T}^n} S(t, m_0, y) m_0(y)\,  dy = 0,
    \end{equation}
    which completes the proof.
\end{proof}

\begin{corollary}
    If the $k$-th order regularity assumptions for $F$ and $G$ hold, then $U(t,m)$ is $k$-th order differentiable with respect to $m$.
\end{corollary}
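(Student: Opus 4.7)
The plan is to proceed by induction on $k$, extending verbatim the strategy used for Theorem~\ref{diff_wrt_m}. The base case $k=1$ is that theorem. For the inductive step, I would construct a $k$-th order linearization of system~\eqref{corr.sys}, identify its fundamental solution with $\frac{\delta^k U}{\delta m^k}$ via Lemma~\ref{mixed2}, and quantify the Taylor remainder using Theorem~\ref{taylor}.

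Fix $(t_0,m_0)$ and $k$ directions $\mu_0^1,\dots,\mu_0^k \in C^{-(1+\alpha)}(\mathbb{T}^n)$. Since the Fokker--Planck equation in \eqref{corr.sys.linear} is linear, let $\mu_j(t,\cdot)$ denote its solution with initial datum $\mu_0^j$; by Lemma~\ref{FP-eq-wp} each $\mu_j$ enjoys the estimate \eqref{ess-FP}. Differentiating the system $k$ times along these directions (and using that each $\mu_j$ solves a homogeneous linear equation, so all lower-order corrections to the density vanish) produces the $k$-th order linearized equation
\begin{equation*}
\begin{cases}
-\dfrac{d}{dt}v^{(k)}(t)=\displaystyle\int_{(\mathbb{T}^n)^k}\frac{\delta^k F}{\delta m^k}\bigl(t,m(t),y_1,\dots,y_k\bigr)\prod_{j=1}^{k}\mu_j(t,y_j)\,dy_1\cdots dy_k,\\[4pt]
v^{(k)}(T)=\displaystyle\int_{(\mathbb{T}^n)^k}\frac{\delta^k G}{\delta m^k}\bigl(m(T),y_1,\dots,y_k\bigr)\prod_{j=1}^{k}\mu_j(T,y_j)\,dy_1\cdots dy_k.
\end{cases}
\end{equation*}
The higher-order regularity assumption $\frac{\delta^k F}{\delta m^k},\frac{\delta^k G}{\delta m^k}\in C^{(2+\alpha)^k}$ and Lemma~\ref{FP-eq-wp} together justify pairing these kernels against the $k$-fold tensor of the distributional $\mu_j$, so the right-hand side is well defined and the scalar ODE has a unique solution satisfying a uniform bound $\|v^{(k)}\|_{C^1([t_0,T])}\le C\prod_{j=1}^{k}\|\mu_0^j\|_{-(1+\alpha)}$.

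Next, mimicking Lemma~\ref{fun-solution}, I would take $\mu_0^j=\delta_{y_j}$ and define the $k$-point kernel
\begin{equation*}
S_k(t_0,m_0,y_1,\dots,y_k)\equiv v^{(k)}\bigl(t_0;\delta_{y_1},\dots,\delta_{y_k}\bigr),
\end{equation*}
which, by linearity in each $\mu_0^j$, yields the representation
\begin{equation*}
v^{(k)}(t_0)=\int_{(\mathbb{T}^n)^k} S_k(t_0,m_0,y_1,\dots,y_k)\,d\mu_0^1(y_1)\cdots d\mu_0^k(y_k).
\end{equation*}
Choosing $\mu_0^j=m_0^j-m_0$, applying Theorem~\ref{taylor} to $F$ and $G$ with $N=k+1$, and bootstrapping the remainder estimate \eqref{eq:ess-z} exactly as in Theorem~\ref{diff_wrt_m}, one obtains
\begin{equation*}
\Bigl|U(t_0,m_0+s_1(m_0^1-m_0)+\cdots)-\text{(Taylor poly.\ of order }k\text{)}\Bigr|\le C\prod_{j=1}^{k}\mathfrak{D}(m_0^j,m_0)\cdot o(1),
\end{equation*}
and comparing with Lemma~\ref{mixed2} identifies $S_k$ with $\frac{\delta^k U}{\delta m^k}$ after imposing the normalization~\eqref{norm_tobe_0} in each variable (which, as at the end of the proof of Theorem~\ref{diff_wrt_m}, follows automatically since $\frac{\delta^k F}{\delta m^k}$ and $\frac{\delta^k G}{\delta m^k}$ integrate to zero against $m$ in every slot).

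The main obstacle I anticipate is bookkeeping at the inductive step: the $k$-th linearization must be obtained by iteratively differentiating the already linearized system, which, because the coefficient $\frac{\delta F}{\delta m}(t,m(t),\cdot)$ depends on $m(t)$, produces intermediate mixed terms that must be shown to collapse correctly into the clean form above. Verifying this collapse, together with justifying that the $k$-fold tensor of distributions in $C^{-(1+\alpha)}$ pairs continuously with a kernel of regularity $C^{(2+\alpha)^k}$ uniformly in $(t,m)$, constitutes the technical heart of the argument; everything else is a direct transcription of the proof of Theorem~\ref{diff_wrt_m}.
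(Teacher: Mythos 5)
Your proposal is correct in substance but follows a genuinely different route from the paper. The paper never linearizes the auxiliary ODE--FP system to order $k$: it observes that the first directional derivative $U^1(t,m)\equiv\partial_{s_1}U(t,m;\mu_0)$ solves a DPE of exactly the same form, with source $\int_{\mathbb{T}^n}\frac{\delta F}{\delta m}(t,m,y)\,d\mu_0(y)$ and terminal datum $\int_{\mathbb{T}^n}\frac{\delta G}{\delta m}(m,y)\,d\mu_0(y)$, then re-applies the first-order machinery of Theorem~\ref{diff_wrt_m} to this new equation to obtain $\frac{\delta U^1}{\delta m}(t,m,y';\mu_0)$, and finally synthesizes the kernel $\frac{\delta^2 U}{\delta m^2}(t,m,y,y')$ from the directional data by summing over the Fourier directions $d\mu_{0k}(y)=e^{-2\pi\mathrm{i}k\cdot y}\,dy$, with induction giving general $k$. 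You instead exploit the fact that the Fokker--Planck flow in \eqref{corr.sys} is linear in $m$ and independent of $u$, so all higher-order density corrections vanish and the $k$-th variation of $u$ is driven in one shot by $\frac{\delta^k F}{\delta m^k}$ and $\frac{\delta^k G}{\delta m^k}$ tensored against the flows $\mu_j$, and you build the kernel with Dirac data as in Lemma~\ref{fun-solution}. Your route gives an explicit closed-form kernel and avoids the Fourier synthesis (so it is not tied to $\mathbb{T}^n$), at the price of the bookkeeping you flag; the paper's route is more modular, since it only ever re-uses the first-order theorem, and it would survive in settings where the linearized density equation depends on the solution, where your ``lower-order corrections vanish'' simplification fails.

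One wrinkle to repair: Lemma~\ref{mixed2} presupposes that $U$ is of class $C^k$, so it cannot be the tool that identifies $S_k$ with $\frac{\delta^k U}{\delta m^k}$ --- that would be circular. Because $C^k$ is defined recursively (the kernel $\frac{\delta^{k-1}U}{\delta m^{k-1}}(m,y_1,\dots,y_{k-1})$ must itself be Lions-differentiable in $m$ for each fixed $(y_1,\dots,y_{k-1})$), the remainder estimate must be run at the level of the kernels, not of $U$: compare $S_{k-1}(t_0,m_0^1,\cdot)-S_{k-1}(t_0,m_0^2,\cdot)$ with the candidate linearization built from $S_k$ and show a quadratic-in-$\mathfrak{D}(m_0^1,m_0^2)$ error, exactly as in the proof of Theorem~\ref{diff_wrt_m}. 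This is available in your framework, since $S_{k-1}$ has an explicit representation through the $(k-1)$-th linearized system and the directions $\mu_j$ do not depend on $m_0$; but as written, your displayed estimate for $U$ together with the appeal to Lemma~\ref{mixed2} does not quite deliver the recursive definition, so this step should be restated accordingly.
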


\begin{proof}
    Since $\frac{\delta U}{\delta m}(m,y)$ exists, we consider the first-order mixed directional derivative in the direction $\mu_0$. Define $U^1(t,m) \equiv \partial_{s_1} U(t,m;\mu_0)$. Then $U^1(t,m)$ satisfies the linearized system, 
    \begin{equation}
        \begin{cases}
            \displaystyle{-\partial_t U^1(t,m) + \mathcal{L}\left(\frac{\delta U^1}{\delta m}, m\right) = \int_{\mathbb{T}^n} \frac{\delta F}{\delta m}(t,m,y)  \, {d}\mu_0(y),}\smallskip \\
           \displaystyle{ U^1(T,m) = \int_{\mathbb{T}^n} \frac{\delta G}{\delta m}(m,y)\,  {d}\mu_0(y).}
        \end{cases}
    \end{equation}
    
    Employing arguments similar to those in the above, we conclude that $\frac{\delta U^1}{\delta m}(t,m,y';\mu_0)$ exists. By the definition of $U^1$, we have 
    \begin{equation}
        \begin{aligned}
            & \int_{\mathbb{T}^n} \frac{\delta U^1}{\delta m}(t,m,y';\mu_0)\,  {d}(m' - m)(y') \\
            = &  \lim_{s \to 0} \frac{1}{s} \left[ \int_{\mathbb{T}^n} \frac{\delta U}{\delta m}(t, m + s(m' - m), y)  \, {d}\mu_0(y) - \int_{\mathbb{T}^n} \frac{\delta U}{\delta m}(t, m, y)  \, {d}\mu_0(y) \right].
        \end{aligned}
    \end{equation}
    
    Therefore, if $\frac{\delta^2 U}{\delta m^2}(t,m,y,y')$ exists, then it must satisfy that 
    \begin{equation}
        \int_{\mathbb{T}^n} \frac{\delta^2 U}{\delta m^2}(t,m,y,y')  \, {d}\mu_0(y) = \frac{\delta U^1}{\delta m}(t,m,y';\mu_0),
    \end{equation}
    for all directions $\mu_0$.
    
    To construct the second derivative explicitly, for $k \in \mathbb{Z}^n$, $\mathrm{i}\equiv\sqrt{-1}$, we consider the Fourier basis ${d}\mu_{0k}(y) = e^{-2\pi \mathrm{i} k\cdot y}  {d}y$ and define 
    \begin{equation}
        \frac{\delta^2 U}{\delta m^2}(t,m,y,y') \equiv \sum_{k \in \mathbb{Z}} \frac{\delta U^1}{\delta m}(t,m,y';\mu_{0k}) e^{2\pi \mathrm{i} k\cdot y}.
    \end{equation}

    One can verify that this expression is well-defined and indeed represents the second-order derivative of $U(t,m)$.
    
    The result for general $k$-th order differentiability follows by repeating this argument and applying mathematical induction.
\end{proof}

\subsection{Inverse problems for the dynamic programming equation}

We have established that equation \eqref{eq:mfc} admits a unique classical solution in the sense of Definition \ref{definition-of-solution-1}. In this subsection, we address the inverse problem \eqref{eq:ip1} within this specific framework.

Let $\mathcal{U}$ denote the space of all real-valued functions defined on $\mathbb{T}^n$. We define the operator $\mathcal{M}$ mapping running costs to value functions by 
\begin{equation}\label{eq:dpem1}
    \mathcal{M}(F) \equiv U(0, m),
\end{equation}
where $U(t,m)$ is the solution of system \eqref{eq:mfc} with running cost $F(t,m)$. In this context, $\mathcal{M}(F)$ corresponds precisely to the operator $\mathcal{M}_G(S)$ introduced in \eqref{eq:ip1}.

The main result of this section characterizes the injectivity properties of the operator $\mathcal{M}$. Since we need to require that $F(x,m)$ is analytic in this result, we first verify that this analyticity assumption is compatible with our regularity conditions introduced at the beginning of this section on $F$. 

\begin{remark}\label{rmk-analytic}
    The notion of analyticity introduced in Definition~\ref{def:analytic} (cf. Remark~\ref{rem:analytic}) represents a natural generalization of the classical definition. To illustrate this, consider the following construction \cite{Linear_in_Tn}. Let $\Phi(z,\theta): \mathbb{R}^n \times \mathbb{R} \to \mathbb{R}$ and $\rho: \mathbb{T}^n \to \mathbb{R}$ be smooth functions, and define $F$ by 
    \begin{equation}
        F(m) = \int_{\mathbb{R}^n} \Phi(z, (\rho * m)(z))\,  dz,
    \end{equation}
    where the convolution is given by 
    \begin{equation}
        (\rho * m)(z) \equiv \int_{\mathbb{T}^n} \rho(y - z) \, dm(y).
    \end{equation}
    The functional derivatives of $F$ are then 
    \begin{align}
        \frac{\delta F}{\delta m}(m,y) &= \int_{\mathbb{R}^n} \frac{\partial \Phi}{\partial \theta}(z, \rho * m(z)) \rho(y - z)\,  dz, \\
        \frac{\delta^k F}{\delta m^k}(m,y,y_1,\dots,y_{k-1}) &= \int_{\mathbb{R}^n} \frac{\partial^k \Phi}{\partial \theta^k}(z, \rho * m(z)) \rho(y - z)\rho(y_1 - z) \cdots \rho(y_{k-1} - z)\,  dz.
    \end{align}
    Consequently, $F(m)$ is analytic provided the partial derivatives $\frac{\partial^k \Phi}{\partial \theta^k}(z,\theta)$ are uniformly bounded in $\mathbb{R}^n$ for $k = 1, 2, \dots, N$. This construction demonstrates how analytic functions on $\mathcal{P}(\mathbb{T}^n)$ can be built from analytic functions on $\mathbb{R}^n$, and it is straightforward to construct such functions that also satisfy  all required regularity conditions introduced on $F$ and $G$ at the beginning of this section.  
\end{remark}

We proceed to establish the injectivity of $\mathcal{M}$ in \eqref{eq:dpem1}. First, we show that it is necessary to require that $F$ is time-independent. In fact, we have the following lemma.  

\begin{lemma}[Non-uniqueness for time-dependent running costs]\label{nonunique-time}
    There exist time-dependent running costs $F_1(t,m) \neq F_2(t,m)$ and a terminal cost $G \in \mathcal{U}$ such that $\mathcal{M}(F_1) = \mathcal{M}(F_2)$.
\end{lemma}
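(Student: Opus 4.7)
The plan is to construct an explicit gauge-type counterexample by exploiting the linearity of $\Lcal$ and the fact that it annihilates $m$-independent perturbations. The guiding heuristic comes from Theorem~\ref{one-to-one}: along the Fokker-Planck flow $m(t)$ issued from $m_0$, the classical solution admits the characteristic representation
\[
U(0,m_0) \;=\; G(m(T)) + \int_0^T F(s, m(s,\cdot))\, ds,
\]
so that $\mathcal{M}(F) = U(0,\cdot)$ depends on $F$ only through its time integral along Fokker-Planck trajectories; any perturbation of $F$ whose primitive vanishes at both endpoints $t=0$ and $t=T$ must be invisible to $\mathcal{M}$.

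First, I would fix any admissible time-independent cost $F_0:\Pcal(\T^n)\to\mathbb{R}$ and any admissible terminal cost $G$ satisfying the regularity hypotheses stated at the outset of Section~\ref{ipmfcdpe}, and denote by $U_1$ the classical solution of \eqref{eq:mfc} with data $(F_0, G)$ provided by Theorem~\ref{one-to-one}. Next, I choose a smooth, nonzero function $\phi \in C^\infty([0,T])$ satisfying $\phi(0) = \phi(T) = 0$ (for instance $\phi(t) = t(T-t)$), and set
\[
F_1(t,m) \equiv F_0(m), \qquad F_2(t,m) \equiv F_0(m) + \phi'(t), \qquad U_2(t,m) \equiv U_1(t,m) - \phi(t).
\]
Since $\phi$ is independent of $m$, the measure derivatives coincide, $\delta U_2/\delta m = \delta U_1/\delta m$, hence $\Lcal(\delta U_2/\delta m, m) = \Lcal(\delta U_1/\delta m, m)$. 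Subtracting $\phi'(t)$ from the equation satisfied by $U_1$ then shows that $U_2$ is the unique classical solution of \eqref{eq:mfc} associated with the pair $(F_2, G)$; the terminal condition is preserved because $\phi(T)=0$. Evaluating at $t=0$ gives
\[
\mathcal{M}(F_2) \;=\; U_2(0,\cdot) \;=\; U_1(0,\cdot) - \phi(0) \;=\; U_1(0,\cdot) \;=\; \mathcal{M}(F_1),
\]
while $F_1 - F_2 = -\phi'(t) \not\equiv 0$ on $[0,T]$.

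The only bookkeeping step, which I do not expect to pose any genuine obstacle, is verifying that $F_2$ inherits the full list of regularity and Lipschitz assumptions imposed on running costs. This is immediate because $\phi'(t)$ depends on $t$ alone: it leaves every spatial H\"older norm of $F_0$ untouched, all higher-order derivatives $\delta^k F_2/\delta m^k$ coincide with those of $F_0$ and therefore satisfy the same bounds, and the Lipschitz modulus defined in \eqref{eq:Lip_F} is unchanged. Consequently, the lemma is reduced to the structural observation that $\mathcal{M}$ is invariant under the one-parameter family of gauge transformations $F \mapsto F + \phi'(t)$ with $\phi(0) = \phi(T) = 0$, establishing the desired non-uniqueness.
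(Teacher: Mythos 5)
Your proof is correct and rests on the same core observation as the paper's: $\mathcal{M}$ cannot distinguish running costs that differ by a purely time-dependent term $\phi'(t)$ with $\phi(0)=\phi(T)=0$, because such a term shifts the solution by the $m$-independent function $\phi(t)$, which $\mathcal{L}$ annihilates and which vanishes at both time endpoints. The paper simply instantiates this at the degenerate choice $G=0$, $F_i(t,m)=g_i(t)$, solves explicitly to get $U_i(0,m)=\int_0^T g_i(s)\,ds$, and picks $g_1\neq g_2$ with equal integral; this is precisely your construction with $F_0\equiv 0$, $G\equiv 0$, and $\phi(t)=\int_0^t(g_2-g_1)\,ds$. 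Your version is a modest but genuine generalization: by exhibiting the gauge transformation $U\mapsto U-\phi$, $F\mapsto F+\phi'$ around an \emph{arbitrary} admissible pair $(F_0,G)$, you show that the non-uniqueness is a structural invariance of the measurement map rather than an artifact of a degenerate example, and your bookkeeping correctly verifies that adding $\phi'(t)$ leaves all the measure-regularity and Lipschitz hypotheses on $F$ untouched. Either proof suffices; yours makes the mechanism more transparent.
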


\begin{proof}
    Consider the case $G(m) = 0$ and let $F_i(t,m) = g_i(t)$ for some real-valued functions $g_i$. The DPE becomes 
    \begin{equation}\label{simple case}
        \begin{cases}
            -\partial_t U_i(t,m) + \mathcal{L}\left(\frac{\delta U_i}{\delta m}, m\right) = g_i(t), \\
            U_i(T,m) = 0,
        \end{cases}  
    \end{equation}
    where
    \begin{equation*}
        \mathcal{L}\left(\frac{\delta U}{\delta m}, m\right) = \int_{\mathbb{T}^n} \operatorname{div}(D_m U)(t,y)m(y)\,  dy + \int_{\mathbb{T}^n} \nabla f \cdot D_m U(t,y) m(y)\,  dy.
    \end{equation*}
    
    A direct verification shows that the solutions are given by 
    \begin{equation*}
        U_i(t,m) = \int_0^{T-t} g_i(s)  ds.
    \end{equation*}
    Therefore, if we choose $g_1$ and $g_2$ such that $\int_0^T g_1(s)  ds = \int_0^T g_2(s)  ds$ but $g_1 \neq g_2$, then $\mathcal{M}(F_1) = \mathcal{M}(F_2)$ while $F_1(t,m) \neq F_2(t,m)$.
\end{proof}


Next, we establish the global unique identifiability for the inverse problem \eqref{eq:dpem1}, namely the injectivity of $\mathcal{M}$, with a time-independent running cost $F$, employing the progressive variational scheme.

\begin{theorem}[Uniqueness for time-independent running costs]\label{unique-time-indep}
    Suppose $F(m)$ is time-independent and analytic with respect to the measure variable $m$. Then the measurement map $\mathcal{M}$ is injective.
\end{theorem}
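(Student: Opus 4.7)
The strategy is to collapse the identity $\mathcal{M}(F_1)=\mathcal{M}(F_2)$ into a single integral equation for $\tilde F:=F_1-F_2$, and then to exhaust every functional Taylor coefficient of $\tilde F$ at a well-chosen base point via a progressive variational scheme driven by the spectral structure of the Fokker-Planck semigroup. By Theorem~\ref{one-to-one}, for each $m_0\in\Pcal(\T^n)$ the value functions $U_i$ solving \eqref{eq:mfc} with $F=F_i$ admit the representation $U_i(0,m_0)=\int_0^T F_i(m(s;m_0))\,\diff s+G(m(T;m_0))$, where $m(\cdot;m_0)$ denotes the Fokker-Planck flow in \eqref{corr.sys} started at $m_0$. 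Subtracting and using the injectivity hypothesis yields the key identity
\begin{equation*}
\int_0^T \tilde F(m(s;m_0))\,\diff s=0,\qquad m_0\in\Pcal(\T^n)\cap C^{\alpha}(\T^n),
\end{equation*}
from which every functional Taylor coefficient of $\tilde F$ will be extracted.

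The natural base point is the Gibbs density $m^*:=Z^{-1}e^{-f}$, the unique stationary solution of the Fokker-Planck equation in \eqref{corr.sys}. A short integration-by-parts establishes the intertwining $\Lcal(\varphi m^*)=m^*(\Lcal^*\varphi)$ with $\Lcal^*\varphi:=\Delta\varphi-\nabla f\cdot\nabla\varphi$, so that the Fokker-Planck semigroup acts on $m^*$-modulated perturbations by $e^{s\Lcal}(\varphi m^*)=m^*\,(e^{s\Lcal^*}\varphi)$. Since $\Lcal^*$ is self-adjoint and nonpositive on $L^2(m^*\,\diff y)$ with discrete spectrum on $\T^n$, there is an orthonormal eigenbasis $\{\phi_k\}_{k\geq 0}\subset C^{\infty}(\T^n)$ satisfying $\Lcal^*\phi_k=-\lambda_k\phi_k$, $\phi_0\equiv 1$, $\lambda_0=0$, and $\lambda_k>0$ for $k\geq 1$. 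Inserting $m_0=m^*$ into the key identity immediately gives $T\tilde F(m^*)=0$, hence $\tilde F(m^*)=0$.

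The progressive variation now proceeds by induction on the order $k\geq 1$. For each multi-index $(p_1,\dots,p_k)$ with $p_i\geq 1$, pick the admissible directions $\eta_i^0:=\phi_{p_i}m^*$; these are mean-zero signed measures, and $m^*+\sum_i s_i\eta_i^0$ is a genuine probability density for all sufficiently small $s_i$. By linearity of the Fokker-Planck equation and the intertwining, the flow is $m(s;\,m^*+\sum_i s_i\eta_i^0)=m^*+\sum_i s_i\,e^{-s\lambda_{p_i}}\phi_{p_i}m^*$. Applying the $k$-th mixed directional derivative $\partial^{k}_{s_1\cdots s_k}|_{s=0}$ from Lemma~\ref{mixed2} to the key identity and integrating in $s$ yields
\begin{equation*}
0=\frac{1-e^{-T(\lambda_{p_1}+\cdots+\lambda_{p_k})}}{\lambda_{p_1}+\cdots+\lambda_{p_k}}\Bigl\langle \frac{\delta^k\tilde F}{\delta m^k}(m^*,\cdot),\,\phi_{p_1}\otimes\cdots\otimes\phi_{p_k}\Bigr\rangle_{(m^*)^{\otimes k}},
\end{equation*}
whose prefactor is strictly positive. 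The zero-mode inner products (those in which some $\phi_{p_i}=\phi_0=1$) are handled by the symmetry of $\frac{\delta^k\tilde F}{\delta m^k}$ in its arguments, combined with the normalization $\int\frac{\delta^k\tilde F}{\delta m^k}(m^*,y,\cdot)\,\diff m^*(y)=0$, which by symmetry extends to every argument. Expanding $\frac{\delta^k\tilde F}{\delta m^k}(m^*,\cdot)$ in the tensor eigenbasis, every coefficient therefore vanishes, and by continuity $\frac{\delta^k\tilde F}{\delta m^k}(m^*,\cdot)\equiv 0$ for every $k\geq 1$.

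Finally, since $\tilde F$ is analytic in the sense of Definition~\ref{def:analytic}, Theorem~\ref{taylor} expands $\tilde F(m)-\tilde F(m^*)$ as a convergent series in precisely these vanishing coefficients, yielding $\tilde F\equiv 0$ and hence $F_1=F_2$. The hardest step is the spectral decomposition: one must (i) establish the intertwining $\Lcal(\varphi m^*)=m^*\Lcal^*\varphi$ to convert the non-self-adjoint Fokker-Planck flow into a self-adjoint problem admitting a usable eigenbasis, and (ii) verify that symmetry plus the one-argument normalization really does suppress every zero-mode component that the strictly mean-zero perturbations $\phi_{p_i}m^*$ with $p_i\geq 1$ cannot probe directly. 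A secondary technical concern is justifying the exchange of $\partial^{k}_{s_1\cdots s_k}$ with $\int_0^T$, which is ensured by the higher-order regularity conditions imposed on $F$ at the beginning of this section.
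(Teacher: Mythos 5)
Your overall strategy coincides with the paper's: both proofs take the Gibbs density $m^*\propto e^{-f}$ (stationary for the Fokker--Planck flow in \eqref{corr.sys}) as base point, perturb it along eigenfunctions of the associated self-adjoint operator so that the perturbed flow is explicit and exponentially decaying, extract the functional Taylor coefficients of $\tilde F=F_1-F_2$ at $m^*$ order by order via mixed directional variation, and conclude by analyticity. Your derivation of the key identity $\int_0^T\tilde F(m(s;m_0))\,ds=0$ directly from the representation $U(0,m_0)=\int_0^T F(m(s;m_0))\,ds+G(m(T;m_0))$ supplied by Theorem~\ref{one-to-one} is a legitimate streamlining of the paper's route through the linearized systems and the identity \eqref{key}, and your $k$-th order spectral computation with the positive prefactor $(1-e^{-T\sum_i\lambda_{p_i}})/\sum_i\lambda_{p_i}$ is the same mechanism the paper exploits with $\mu(t)=e^{-\lambda t}E_\lambda$.

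The one step that does not hold as written is your treatment of the zero modes. The normalization \eqref{norm_tobe_0} is imposed only in the last argument of each derivative; differentiating the identity $\int\frac{\delta \tilde F}{\delta m}(m,x)\,dm(x)=0$ with respect to $m$ shows that $\int\frac{\delta^2\tilde F}{\delta m^2}(m,x,y)\,dm(x)$ equals $-\frac{\delta\tilde F}{\delta m}(m,y)$ up to a constant, not zero, and the normalized kernel $\frac{\delta^2\tilde F}{\delta m^2}$ is symmetric only up to additive functions of single variables. Hence ``the normalization extends to every argument by symmetry'' is unjustified for $k\ge2$, and you cannot conclude that the full kernel $\frac{\delta^k\tilde F}{\delta m^k}(m^*,\cdot)$ vanishes. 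The gap is fixable in either of two ways, one of which you should state explicitly: (i) drop the pointwise claim and observe that the Taylor increments in Theorem~\ref{taylor} pair the kernels only against $(m'-m^*)^{\otimes k}$, whose factors are mean-zero, so the vanishing of the components with all $p_i\ge1$ already annihilates every Taylor term --- adding an approximation step (expand $(m'-m^*)/m^*$ in $\{\phi_k\}_{k\ge1}$ for smooth positive densities $m'$, then use density of such $m'$ and the Lipschitz continuity of $F$ in $\mathfrak{D}$ to reach general $m'$); or (ii) follow the paper and take initial perturbations $\mu_0$ that are not mean-zero: the linearization framework (system \eqref{corr.sys.linear} and Lemma~\ref{fun-solution}) is posed for arbitrary $\mu_0\in C^{-(1+\alpha)}(\T^n)$ precisely so that one can test against a complete eigenbasis and obtain pointwise vanishing of the kernels, with no zero-mode issue at all.
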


\begin{proof}
 Consider the following two systems ($i=1,2$), 
 \begin{equation}\label{general case 2}
\begin{cases}
    -\partial_t U_i(t,m)+\mathcal{L}(\frac{\delta U_i}{\delta m},m)=F_i(m),\\
    U_i(T,m)=G(m),
\end{cases}  
\end{equation}
where
\begin{equation*}
     \mathcal{L}\left(\frac{\delta U}{\delta m},m\right)=\int_{\mathbb{T}^n} \text{div}(D_m U)(t,y)m(y)\,dy+\int_{\mathbb{T}^n} \nabla f\cdot D_m U(t,y)m(y)\,dy.
\end{equation*}
\textbf{First-order terms:}
Suppose $U_1(0,m_0) = U_2(0,m_0)$ for all $m_0 \in \mathcal{P}(\mathbb{T}^n)$, with $F_i$ independent of $t$. Fix $m_0 \in \mathcal{P}(\mathbb{T}^n) \cap C^\alpha(\mathbb{T}^n)$ (viewing $m_0$ as its density). Since $U_i$ ($i=1,2$) are $C^1$ in the measure variable, we have 
    \begin{equation}
        \frac{\delta U_1}{\delta m} (0,m_0,y) = \frac{\delta U_2}{\delta m}(0,m_0,y).
    \end{equation}

    Consider the corresponding auxiliary systems for $t = 0$, 
    \begin{equation}\label{corr.sys.2}
        \begin{cases}
           \displaystyle{ -\frac{d}{dt} u_i(t) = F_i(m_i), }& \text{in } (0,T),\smallskip \\
            \partial_t m_i - \Delta m_i - \nabla \cdot (m_i \nabla f) = 0, & \text{in } (0,T) \times \mathbb{T}^n, \\
            m_i(0,\cdot) = m_0(\cdot), & \text{in } \mathbb{T}^n, \\
            u_i(T) = G(m_i(T,\cdot)), & \text{in } \mathbb{T}^n, 
        \end{cases}  
    \end{equation}
    and their linearizations around $(u_i, m_i)$, 
    \begin{equation}\label{corr.sys.linear.2}
        \begin{cases}
           \displaystyle{ -\frac{d}{dt}v_i(t) = \frac{\delta F_i}{\delta m}(m_i(t))(\mu_i),} & \text{in } (0,T), \smallskip\\
            \partial_t \mu_i - \Delta \mu_i - \nabla \cdot (\mu_i \nabla f) = 0, & \text{in } (0,T) \times \mathbb{T}^n, \\
            \mu_i(0) = \mu_0, & \text{in } \mathbb{T}^n,\smallskip \\
            \displaystyle{v_i(T) = \frac{\delta G}{\delta m}(m_i(T))(\mu_i(T)),} & \text{in } \mathbb{T}^n.
        \end{cases}  
    \end{equation}

    By equation \eqref{key} and the results of Subsection \ref{differentiability of U}, we obtain 
    \begin{equation}
        v_1(0) = \int_{\mathbb{T}^n} \frac{\delta U_1}{\delta m}(0,m_0,y)\mu_0(y)  dy = \int_{\mathbb{T}^n} \frac{\delta U_2}{\delta m}(0,m_0,y)\mu_0(y)  dy = v_2(0).
    \end{equation}

   Since $m_i$ and $\mu_i$ share the same initial condition 
   (equation ~\eqref{corr.sys.2} and ~\eqref{corr.sys.linear.2}), we define $m_1=m_2\equiv\hat{m}$ and $\mu_1=\mu_2\equiv\mu$. Let $\overline{v}(t) \equiv v_1(t) - v_2(t)$. Moreover, we have 
    \begin{equation}
        v_i(t) = \int_0^{T-t} \frac{\delta F_i}{\delta m}(\hat{m})(\mu)  ds + \frac{\delta G}{\delta m}(\hat{m}(T))(\mu(T)).
    \end{equation}
    Therefore,
    \begin{equation}\label{integral-eq}
        \int_0^T \left[ \frac{\delta F_1}{\delta m}(\hat{m})(\mu) - \frac{\delta F_2}{\delta m}(\hat{m})(\mu) \right] ds = 0.
    \end{equation}

    Let $\lambda$ be an eigenvalue of the operator $-\Delta + \nabla f \cdot \nabla + \Delta f$ with $E_\lambda(x)$ the corresponding eigenfunction. Choosing $\mu_0(x) = E_\lambda(x)$, the solution to the linearized equation is $\mu(t,x) = e^{-\lambda t} E_\lambda(x)$. Further selecting $m_0(x) = \frac{e^{-f(x)}}{\int_{\mathbb{T}^n} e^{-f(y)}  dy}$, we can verify that $\hat{m}(t,x) = m_0(x)$.

    In this case, equation \eqref{integral-eq} implies that  
    \begin{equation}\label{eq:DPE-result}
        \int_{\mathbb{T}^n} \left[ \frac{\delta F_1}{\delta m}(\hat{m}, y) - \frac{\delta F_2}{\delta m}(\hat{m}, y) \right] E_\lambda(y)\,  dy = 0.
    \end{equation}

    Since $-\Delta + \nabla f \cdot \nabla + \Delta f$ is self-adjoint in $L^2(\mathbb{T}^n, e^{-f}dx)$, the eigenfunctions $\{E_\lambda\}$ form a complete basis. Since, by Theorem \ref{diff_wrt_m}, $\frac{\delta F_1}{\delta m}(\hat{m}, y) - \frac{\delta F_2}{\delta m}(\hat{m}, y)$ is continuous in $y$, ~\eqref{eq:DPE-result} implies that 
    \begin{equation*}
        \frac{\delta F_1}{\delta m}(\hat{m}, y) = \frac{\delta F_2}{\delta m}(\hat{m}, y), \quad \forall y \in \mathbb{T}^n.
    \end{equation*}

    \textbf{Higher-order terms:} Applying the second-order mixed directional derivative $\partial^2_{s_1 s_2}$ (with direction $(\mu_0, \mu_0)$) to system \eqref{eq:mfc} and defining $U_i^{(2)}(t,m) \equiv \partial^2_{s_1 s_2} U_i(t,m)$ for $i=1,2$, we find that $U_i^{(2)}$ satisfies 
    \begin{equation*}
        \begin{cases}
          \displaystyle{  -\partial_t U_i^{(2)}(t,m) + \mathcal{L}\left(\frac{\delta U_i^{(2)}}{\delta m}, m\right) = \int_{\mathbb{T}^n} \int_{\mathbb{T}^n} \frac{\delta^2 F_i}{\delta m^2}(m,y,y') \mu_0(y)\mu_0(y')\,  dy dy',}\smallskip \\
          \displaystyle{  U_i^{(2)}(T,m) = \int_{\mathbb{T}^n} \int_{\mathbb{T}^n} \frac{\delta^2 G}{\delta m^2}(m,y,y') \mu_0(y)\mu_0(y')\,  dy dy'.}
        \end{cases}
    \end{equation*}
    Since $U_1^{(2)}(0,m) = U_2^{(2)}(0,m)$ by definition, repeating the previous argument yields 
    \begin{equation}
        \frac{\delta^2 F_1}{\delta m^2}(\hat{m}, y_1, y_2) = \frac{\delta^2 F_2}{\delta m^2}(\hat{m}, y_1, y_2), \quad \forall y_1, y_2 \in \mathbb{T}^n.
    \end{equation}

    Proceeding by induction, for any $k \geq 1$, applying the $k$-th order mixed directional derivative $\partial^k_{s_1 \cdots s_k}$ (with direction $(\mu_0, \dots, \mu_0)$) gives 
    \begin{equation}
        \frac{\delta^k F_1}{\delta m^k}(\hat{m}, y, y_1, \dots, y_{k-1}) = \frac{\delta^k F_2}{\delta m^k}(\hat{m}, y, y_1, \dots, y_{k-1}).
    \end{equation}

    \textbf{Zeroth-order term:} For $k = 0$, the condition $U_1(0, \hat{m}) = U_2(0, \hat{m})$ implies that 
    \begin{equation}
        G(\hat{m})+\int_0^T F_1(\hat{m})\,  ds = u_1(0) = u_2(0) = G(\hat{m})+\int_0^T F_2(\hat{m})\,  ds.
    \end{equation}
    Since $\hat{m}$ is time-independent, we conclude that $F_1(\hat{m}) = F_2(\hat{m})$.

    \textbf{Conclusion:} Given that $F_1$ and $F_2$ are analytic with respect to $m$ and agree on all derivatives at $\hat{m}$, we conclude that 
    \begin{equation*}
        F_1(m) = F_2(m), \quad \forall m \in \mathcal{P}(\mathbb{T}^n).
    \end{equation*}
\end{proof}


\section{Inverse Problems for the MFG Master Equation in $\mathbb{T}^{n}$}
\label{IPMFGMaster}
\subsection{Introduction to the Master Equation and static solutions}

The theory of Mean Field Games (MFGs) has emerged as a powerful framework for modeling strategic interactions among large populations of rational agents. In the MFG paradigm, each agent solves an individual optimal control problem that is coupled through an aggregate population distribution, leading to a system of coupled partial differential equations: a Hamilton-Jacobi-Bellman (HJB) equation describing optimal individual behavior, and a Fokker-Planck (FP) equation governing the evolution of the population distribution.

To provide a more comprehensive and unified perspective on MFG equilibria, Lasry and Lions introduced the \emph{Master Equation}. This infinite-dimensional partial differential equation is formulated on the space $[0,T] \times \mathbb{R}^n \times \mathcal{P}(\mathbb{R}^n)$ and encodes the value function of a generic agent while fully characterizing the game's equilibrium. The Master Equation serves as a fundamental object that encapsulates the entire dynamics of the MFG system, allowing for a more direct analysis of convergence, stability, and long-term behavior.

The Master Equation takes the following general form, 
\begin{equation}\label{master_equation}
    \begin{cases}
        -\partial_t U(t,x,m) - \Delta_x U(t,x,m) + H\big(x, D_x U(t,x,m)\big) \\
        \quad - \displaystyle\int_{\mathbb{T}^n} \nabla_y \cdot \big[D_m U\big](t,x,m,y)\, dm(y) \\
        \quad + \displaystyle\int_{\mathbb{T}^n} D_m U(t,x,m,y) \cdot H\big(y, D_x U(t,y,m)\big)\, dm(y) 
        =  \mathscr{F}(x,m), \\
        \qquad \text{in } (0,T) \times \mathbb{T}^n \times \mathcal{P}(\mathbb{T}^n), \\[2mm]
        U(T,x,m) = \mathscr{G}(x,m), \\
        \qquad \text{in } \mathbb{T}^n \times \mathcal{P}(\mathbb{T}^n).
    \end{cases}
\end{equation}
where the value function \( U(t, x, m) \) represents the optimal value for a typical agent at time \( t \), with state \( x \), and under the population distribution \( m \). The function \( \mathscr{F} \) models the running cost, capturing the agent's interaction with the population, while \( \mathscr{G} \) denotes the terminal cost and $H$ denotes the Hamiltonian.

A crucial concept in the analysis of MFGs is that of \emph{static solutions} (see Definition~\ref{static-solution}). These represent long-term equilibrium states where the population distribution and value function become time-independent. Static solutions provide fundamental insights into the long-term behavior of MFG systems and serve as natural reference states for studying perturbations and inverse problems.

In this section, we investigate inverse problems for the Master Equation defined on the torus $\mathbb{T}^n$. We develop a systematic approach for recovering unknown structures—such as the running cost $\mathscr{F}(x,m)$ and Hamiltonian $H(x,p)$—from observational data of the system's solutions. By combining progressive variational   methods with static solution analysis, we establish identifiability results that reveal how the intrinsic governing laws of MFG systems can be reconstructed from their emergent collective behavior.

Cardaliaguet, Delarue, Lasry and Lions \cite{Linear_in_Tn} have developed a comprehensive theory concerning the well-posedness of the Master Equation. Below, we recall some essential definitions and results that will be used.

\begin{definition}
    A map $U: \mathbb{R} \times \mathbb{T}^n \times \mathcal{P}(\mathbb{T}^n) \to \mathbb{R}$ is called a \emph{classical solution} of the first-order Master Equation if the following conditions are satisfied: 

    \begin{enumerate}
        \item $U$ is continuous in all variables (with respect to the $\mathfrak{D}$ metric), where for each fixed $m \in \mathcal{P}(\mathbb{T}^n)$, the function $U(\cdot, \cdot, m)$ belongs to $C^{1,2}(\mathbb{R} \times \mathbb{T}^n)$; 
        
        \item $U$ is continuously differentiable with respect to the measure variable $m$, where the first-order derivative $\frac{\delta U}{\delta m}(t,x,m,y)$ is continuous in all arguments and twice differentiable in $y$, with all derivatives continuous in all variables; 
        
        \item $U$ satisfies the Master Equation \eqref{master_equation}.
    \end{enumerate}
\end{definition}

To establish the well-posedness of the Master Equation, we require certain regularity conditions on the functions $\mathscr{F}$ and $\mathscr{G}$. 
    \begin{enumerate}
        \item \textbf{Regularity conditions:}
            \begin{align}
                \sup_{m \in \mathcal{P}(\Omega)} \left( \|\mathscr{F}(\cdot, m)\|_\alpha + \left\|\frac{\delta \mathscr{F}}{\delta m}(\cdot, m, \cdot)\right\|_{\alpha, 2+\alpha} \right) + \text{Lip}\left(\frac{\delta \mathscr{F}}{\delta m}\right) &< \infty, \\
                \sup_{m \in \mathcal{P}(\Omega)} \left( \|\mathscr{G}(\cdot, m)\|_\alpha + \left\|\frac{\delta \mathscr{G}}{\delta m}(\cdot, m, \cdot)\right\|_{\alpha, 2+\alpha} \right) + \text{Lip}\left(\frac{\delta \mathscr{G}}{\delta m}\right) &< \infty,
            \end{align}
            where the Lipschitz constants are defined by 
            \begin{align}
                \text{Lip}\left(\frac{\delta \mathscr{F}}{\delta m}\right) &\equiv \sup_{m_1 \neq m_2} \left[ \mathfrak{D}(m_1, m_2)^{-1} \left\|\frac{\delta \mathscr{F}}{\delta m}(\cdot, m_1, \cdot) - \frac{\delta \mathscr{F}}{\delta m}(\cdot, m_2, \cdot)\right\|_{\alpha, 1+\alpha} \right], \\
                \text{Lip}\left(\frac{\delta \mathscr{G}}{\delta m}\right) &\equiv \sup_{m_1 \neq m_2} \left[ \mathfrak{D}(m_1, m_2)^{-1} \left\|\frac{\delta \mathscr{G}}{\delta m}(\cdot, m_1, \cdot) - \frac{\delta \mathscr{G}}{\delta m}(\cdot, m_2, \cdot)\right\|_{\alpha, 1+\alpha} \right].
            \end{align}
        
        \item \textbf{Higher-order regularity:} For all $k \in \mathbb{N}$,
            \begin{align}
                \sup_{m \in \mathcal{P}(\Omega)} \left\| \frac{\delta^k \mathscr{F}}{\delta m^k}(\cdot, m, y, y_1, \dots, y_{k-1}) \right\|_{\alpha, (2+\alpha)^k}+\mathrm{Lip}\left(\frac{\delta^k \mathscr{F}}{\delta m^k}\right) &< \infty, \\
                \sup_{m \in \mathcal{P}(\Omega)} \left\| \frac{\delta^k \mathscr{G}}{\delta m^k}(m, y, y_1, \dots, y_{k-1}) \right\|_{(2+\alpha)^k}+\mathrm{Lip}\left(\frac{\delta^k \mathscr{G}}{\delta m^k}\right) &< \infty.
            \end{align}
    \end{enumerate}
Here, $\Omega$ may denote either $\mathbb{T}^n$ or a general bounded domain in $\mathbb{R}^n$. We employ this general notation as these assumptions will be applied to problems in various domains throughout the paper.
We also require coercivity conditions on the Hamiltonian $H$ and monotonicity conditions on the functions $\mathscr{F}$ and $\mathscr{G}$.
    \begin{enumerate}
        \item The Hamiltonian $H: \mathbb{T}^n \times \mathbb{R}^n \to \mathbb{R}$ is smooth and globally Lipschitz continuous, and satisfies the uniform ellipticity condition, 
            \begin{equation}
                0 < D^2_{pp} H(x,p) \leq C I_n, \quad \text{for all } (x,p) \in \mathbb{T}^n \times \mathbb{R}^n,
            \end{equation}
            where $I_n$ denotes the $n \times n$ identity matrix.
            
        \item The functions $\mathscr{F}, \mathscr{G}: \mathbb{T}^n \times \mathcal{P}(\mathbb{T}^n) \to \mathbb{R}$ are globally Lipschitz continuous and satisfy the following monotonicity conditions: for any $m, m' \in \mathcal{P}(\mathbb{T}^n)$,
            \begin{equation}
                \begin{aligned}
                    &\int_{\mathbb{T}^n} \left(\mathscr{F}(x, m) - \mathscr{F}(x, m')\right) d(m - m')(x) \geq 0, \\
                    &\int_{\mathbb{T}^n} \left(\mathscr{G}(x, m) - \mathscr{G}(x, m')\right) d(m - m')(x) \geq 0.
                \end{aligned}
            \end{equation}
    \end{enumerate}
We maintain these conditions throughout the section, and we further present some key results from \cite{Linear_in_Tn}, reformulated for our application.

\begin{theorem}[Theorem 2.8 in \cite{Linear_in_Tn}]
   The first-order Master Equation \eqref{master_equation} admits a unique classical solution $U$. Moreover, the following hold: 
    \begin{enumerate}
        \item $U$ is continuously differentiable in all variables;
        \item $\frac{\delta U}{\delta m}$ is continuous in all arguments;
        \item $U(t,\cdot,m)$ and $\frac{\delta U}{\delta m}(t,\cdot,m,\cdot)$ are uniformly bounded in $C^{2+\alpha}$ and $C^{2+\alpha,1+\alpha}$, respectively, independent of $t$ and $m$.
    \end{enumerate}
\end{theorem}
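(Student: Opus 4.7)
The plan is to construct $U$ as the value function along the characteristic MFG system and then verify, step by step, that it inherits the requisite regularity. Given $(t_0, x_0, m_0)$ with $t_0 \in [0,T]$, I would introduce the coupled forward--backward MFG system
\begin{equation*}
\begin{cases}
-\partial_t u - \Delta u + H(x, D_x u) = \mathscr{F}(x, m) & \text{in } (t_0,T)\times\mathbb{T}^n, \\
\partial_t m - \Delta m - \mathrm{div}\bigl(m\, D_p H(x, D_x u)\bigr) = 0 & \text{in } (t_0,T)\times\mathbb{T}^n, \\
m(t_0, \cdot) = m_0,\quad u(T, x) = \mathscr{G}(x, m(T)), &
\end{cases}
\end{equation*}
and invoke the Lasry--Lions monotonicity of $\mathscr{F}, \mathscr{G}$ together with the uniform ellipticity of $H$ to obtain a unique classical solution $(u, m)$ via a Schauder fixed point argument (for existence) and a monotonicity energy identity obtained by pairing the two equations against each other (for uniqueness). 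Defining $U(t_0, x_0, m_0) := u(t_0, x_0)$, the uniform-in-$(t_0, m_0)$ Schauder estimates on $u$ deliver the $C^{2+\alpha}$ bound of conclusion (iii) for $U$ itself together with the $(t,x)$-differentiability component of conclusion (i).

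Next, I would establish differentiability of $U$ in $m$ by linearizing the MFG system around the reference pair $(u, m)$. For a distributional perturbation $\mu_0 \in C^{-(1+\alpha)}(\mathbb{T}^n)$, the associated system for $(v,\mu)$ reads
\begin{equation*}
\begin{cases}
-\partial_t v - \Delta v + D_p H(x, D_x u)\cdot D_x v = \int_{\mathbb{T}^n}\frac{\delta \mathscr{F}}{\delta m}(x, m, y)\mu(t,y)\,dy, \\
\partial_t \mu - \Delta \mu - \mathrm{div}\bigl(\mu\, D_p H(x, D_x u)\bigr) - \mathrm{div}\bigl(m\, D^2_{pp} H(x, D_x u) D_x v\bigr) = 0, \\
\mu(t_0) = \mu_0,\qquad v(T, x) = \int_{\mathbb{T}^n}\frac{\delta \mathscr{G}}{\delta m}(x, m(T), y)\mu(T,y)\,dy.
\end{cases}
\end{equation*}
Mimicking Lemma~\ref{fun-solution}, specializing $\mu_0 = \delta_y$ produces a kernel $S(t_0, x_0, m_0, y) := v(t_0, x_0; \delta_y)$, which I would identify with $\frac{\delta U}{\delta m}$ by a Taylor-remainder argument parallel to the proof of Theorem~\ref{diff_wrt_m}: the quadratic residual between $U(t_0,x_0,m_0^1) - U(t_0,x_0,m_0^2)$ and its linear part is controlled using second-order Taylor expansions of $\mathscr{F},\mathscr{G}$ together with the stability estimate $\sup_t \mathfrak{D}(m_1(t),m_2(t)) \le C\,\mathfrak{D}(m_0^1, m_0^2)$ for the nonlinear MFG flow. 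Continuity of $\frac{\delta U}{\delta m}$ in all arguments, together with twice differentiability in $y$, then follows from the uniform Schauder estimates for $v$ obtained at this step, completing the measure-regularity parts of (i)--(iii).

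With $\frac{\delta U}{\delta m}$ and $D_m U$ in hand, that $U$ solves \eqref{master_equation} is verified by a flow argument analogous to Theorem~\ref{one-to-one}: along the MFG characteristic $(X(t), m(t))$, set $\phi(t) := U(t, X(t), m(t))$, differentiate using the Wasserstein chain rule (Lemma~\ref{mixed}) in the $m$-variation and It\^o's formula in the $x$-variation, and match the resulting ODE with the HJB equation satisfied by $u$. Uniqueness among classical solutions is inherited from MFG uniqueness, since any other classical solution composed with the characteristic reproduces an MFG solution with the same initial measure. The principal obstacle will be the well-posedness and stability of the linearized system: unlike the decoupled Fokker--Planck equation of Lemma~\ref{FP-eq-wp}, $(v,\mu)$ is genuinely forward--backward coupled through the term $\mathrm{div}(m\,D^2_{pp}H\,D_x v)$ and the measure-dependent terminal datum, so the $C^{-(1+\alpha)}$ estimate for $\mu$ and the H\"older estimate for $v$ cannot be read off directly from parabolic regularity. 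My plan for this step is to exploit the monotonicity identity
\begin{equation*}
\int_{t_0}^T\!\!\int_{\mathbb{T}^n} m\, D^2_{pp}H(x,D_x u) D_x v \cdot D_x v\, dx\, dt + \Bigl\langle \mu(T), \tfrac{\delta \mathscr{G}}{\delta m}(m(T))(\mu(T))\Bigr\rangle + \int_{t_0}^T\!\!\Bigl\langle \mu, \tfrac{\delta \mathscr{F}}{\delta m}(m)(\mu)\Bigr\rangle dt \ge 0,
\end{equation*}
obtained by testing each equation of the linearized system against the other, to force uniqueness and an $L^2$-type stability bound, and then to upgrade this to the required H\"older and dual-H\"older estimates via a duality argument against smooth solutions of the backward adjoint system combined with Schauder estimates for the forward equation.
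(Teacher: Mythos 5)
This statement is not a result the paper proves: it is quoted verbatim as Theorem~2.8 of Cardaliaguet, Delarue, Lasry, and Lions (the reference \cite{Linear_in_Tn}), and the paper relies on the citation rather than supplying an argument. There is therefore no internal proof to compare your sketch against; what you have produced is a reconstruction of the argument in the cited monograph.

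As such a reconstruction, your outline captures the broad structure correctly: build $U$ from the MFG system, differentiate in $m$ through the linearized system, identify the resulting kernel with $\frac{\delta U}{\delta m}$ via a quadratic Taylor remainder, and verify the master equation by the chain rule along the $m$-flow. Two points deserve scrutiny. First, the verification step is cleaner if you hold $x$ fixed and flow only $m(t)$: the $\Delta_x U$ term in \eqref{master_equation} comes directly from the HJB equation satisfied by $u(t,\cdot)=U(t,\cdot,m(t))$, so there is no need to introduce a stochastic state $X(t)$ and invoke It\^o; the paper's own proof of Theorem~\ref{one-to-one} for the DPE fixes the spatial argument and differentiates along $m(t)$ alone. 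Second, and more substantively, you are right that the coupled forward--backward linearized system is the main obstacle, but your plan for it is thin. The monotonicity identity you write down yields uniqueness and an $L^2$-type bound, yet upgrading to the \emph{uniform} $C^{2+\alpha}$ and $C^{2+\alpha,1+\alpha}$ estimates asserted in conclusion (iii) --- uniform in both $t$ and $m$ --- requires first establishing global Lipschitz stability of the \emph{nonlinear} MFG flow $m_0\mapsto(u,m)$ in the Wasserstein metric, not merely of its linearization, and this is not captured by the decoupled Fokker--Planck estimate of Lemma~\ref{FP-eq-wp}. In the cited reference this nonlinear a priori stability (short-time Schauder estimates propagated globally via the Lasry--Lions monotonicity argument) is the bulk of the technical work, and your duality-plus-Schauder bootstrap needs that input as a prerequisite before it can close.
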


We now introduce the corresponding mean field game (MFG) system. 

\begin{theorem}[Section 2.4 in \cite{Linear_in_Tn}]
    Let $(u,m)$ be the solution of the following MFG system, 
    \begin{equation}\label{mfg}
        \begin{cases}
            -\partial_t u(t,x) - \Delta u(t,x) + H(x,\nabla u) = \mathscr{F}(x,m(t)), & \text{in } (t_0,T)\times\mathbb{T}^n , \\
            \partial_t m(t,x) - \Delta m(t,x) - \mathrm{div}\left(m(t,x) H_p(x,\nabla u)\right) = 0, & \text{in } (t_0,T)\times\mathbb{T}^n, \\
            u(T,x) = \mathscr{G}(x,m(T)), & \text{in } \mathbb{T}^n, \\
            m(t_0,x) = m_0(x), & \text{in } \mathbb{T}^n.
        \end{cases}  		
    \end{equation}
    Define the value function $V(t_0,x,m_0) \equiv u(t_0,x; m_0)$. Then $V$ coincides with the solution $U$ of the Master Equation \eqref{master_equation}.
\end{theorem}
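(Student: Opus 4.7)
The plan is to verify that the function $V(t, x, m) \equiv u(t, x; t, m)$, obtained by varying the initial data $(t, m)$ of the MFG system~\eqref{mfg}, is itself a classical solution of the Master Equation~\eqref{master_equation}, and then to invoke the uniqueness of classical solutions stated in the preceding theorem to conclude $V \equiv U$. The key tool is the \emph{dynamic consistency} of~\eqref{mfg}: if $(u, m)$ denotes the solution with initial data $(t_0, m_0)$, uniqueness of the MFG forces the restriction $(u|_{[t,T]}, m|_{[t,T]})$ to coincide with the solution of~\eqref{mfg} on $[t, T]$ started from $m(t)$. Consequently,
\begin{equation}\label{eq:prop-consistency}
    V(t, x, m(t)) = u(t, x), \qquad t \in [t_0, T],\ x \in \mathbb{T}^n.
\end{equation}

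Differentiating~\eqref{eq:prop-consistency} in $t$ along the Wasserstein flow, I obtain
\begin{equation*}
    \partial_t V(t, x, m(t)) + \int_{\mathbb{T}^n} \frac{\delta V}{\delta m}(t, x, m(t), y)\, \partial_t m(t, y)\, dy = \partial_t u(t, x),
\end{equation*}
which is justified once $V$ is shown to be $C^1$ in $m$ (addressed below). Inserting the Fokker--Planck equation from~\eqref{mfg} for $\partial_t m$ and integrating by parts in $y$ via $D_m V = \nabla_y \tfrac{\delta V}{\delta m}$ reproduces exactly the two nonlocal integrals appearing in~\eqref{master_equation}. Substituting the HJB equation of~\eqref{mfg} for $\partial_t u$ on the right-hand side, together with the pointwise identities $\nabla_x u(t, x) = \nabla_x V(t, x, m(t))$ and $\Delta_x u(t, x) = \Delta_x V(t, x, m(t))$ (immediate from~\eqref{eq:prop-consistency} viewed as an equality in $x$ for each fixed $(t, m(t))$), produces the Master Equation along the flow. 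Evaluating at $t = t_0$ and letting $(t_0, x, m_0)$ range over all admissible data recovers~\eqref{master_equation} pointwise, while the terminal condition $V(T, x, m) = \mathscr{G}(x, m)$ follows from the degenerate case $t_0 = T$ of~\eqref{mfg}.

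The principal obstacle is establishing the regularity of $V$ required to be a classical solution, in particular the existence and continuity of $\tfrac{\delta V}{\delta m}(t, x, m, y)$, since the coupling in~\eqref{mfg} is both nonlinear and nonlocal. My approach mirrors the architecture of Section~\ref{differentiability of U}: for a distributional perturbation $\mu_0 \in C^{-(1+\alpha)}(\mathbb{T}^n)$, I would linearize~\eqref{mfg} around $(u, m)$ to obtain the coupled system
\begin{equation*}
    \begin{cases}
        -\partial_t v - \Delta v + H_p(x, \nabla u) \cdot \nabla v = \displaystyle\int_{\mathbb{T}^n} \frac{\delta \mathscr{F}}{\delta m}(x, m(t), y)\, \mu(t, y)\, dy, \\
        \partial_t \mu - \Delta \mu - \mathrm{div}\bigl(\mu\, H_p(x, \nabla u) + m\, H_{pp}(x, \nabla u)\nabla v\bigr) = 0, \\
        \mu(t_0, \cdot) = \mu_0, \qquad v(T, x) = \displaystyle\int_{\mathbb{T}^n} \frac{\delta \mathscr{G}}{\delta m}(x, m(T), y)\, \mu(T, y)\, dy,
    \end{cases}
\end{equation*}
whose well-posedness and uniform estimates, in the spirit of Lemma~\ref{FP-eq-wp}, should follow from the monotonicity of $\mathscr{F}, \mathscr{G}$ and the uniform convexity of $H$ in $p$. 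One then identifies $\tfrac{\delta V}{\delta m}(t_0, x, m_0, y)$ with the fundamental solution $v(t_0, x; \delta_y)$ as in Lemma~\ref{fun-solution}, while parabolic regularity of the HJB component of~\eqref{mfg} supplies the required smoothness in $t$ and $x$. Once these regularity facts are secured, the calculation of the second paragraph shows that $V$ is a classical solution of~\eqref{master_equation} with terminal data $\mathscr{G}$, and the uniqueness assertion of the preceding existence theorem immediately yields $V \equiv U$.
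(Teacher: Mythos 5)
Your proposal is correct and takes essentially the same route as the paper, which delegates this statement to the construction in \cite{Linear_in_Tn} (Sections 2.4 and 3.3--3.4) and carries out exactly your flow-property/chain-rule/linearization argument for the simpler DPE case in Theorems~\ref{one-to-one} and~\ref{diff_wrt_m}. The only discrepancy is cosmetic: your integration by parts against the Fokker--Planck drift yields $H_p\bigl(y, D_x U(t,y,m)\bigr)$ in the nonlocal term, whereas \eqref{master_equation} displays $H$ there, which is an apparent typo in the paper rather than a gap in your argument.
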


Fix $t_0 \in [0,T]$ and $m_0 \in \mathcal{P}(\mathbb{T}^n)$, and let $(u,m)$ be the solution to the MFG system with the initial condition $m(t_0,\cdot) = m_0$. Consider the associated linearized system,
\begin{equation}\label{linearized system}
    \begin{cases}
        -\partial_t v - \Delta v + H_p(x,\nabla u) \cdot \nabla v = \dfrac{\delta \mathscr{F}}{\delta m} (x,m(t))(\rho(t)), & \text{in } (t_0,T) \times \mathbb{T}^n, \\
        \partial_t \rho - \Delta \rho - \nabla \cdot \left(m D^2_{pp}H(x,\nabla u) \nabla v\right) - \nabla \cdot \left(\rho H_p(x,\nabla u)\right) = 0, & \text{in } (t_0,T) \times \mathbb{T}^n, \\
        v(T,x) = \dfrac{\delta \mathscr{G}}{\delta m}(x,m(T))(\rho(T)), & \text{in } \mathbb{T}^n, \\
        \rho(t_0,x) = \mu_0(x), & \text{in } \mathbb{T}^n.
    \end{cases}
\end{equation}

\begin{theorem}[Section 3.3 in \cite{Linear_in_Tn}]
    Let $m_0$ be a smooth density bounded below by a positive constant, and let $\mu_0$ be a smooth function on $\mathbb{T}^n$. Then the linearized system \eqref{linearized system} admits a unique solution $(v, \rho)$.
\end{theorem}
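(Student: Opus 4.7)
The plan is to prove uniqueness first via a Lasry--Lions type energy identity, and then deduce existence from a Schauder-based fixed-point construction combined with the Fredholm alternative. The two structural features that make everything work are (i) the monotonicity of $\mathscr{F}$ and $\mathscr{G}$ (which is inherited by the linearization as a nonnegative bilinear form), and (ii) the uniform ellipticity of $D^2_{pp}H$ coupled with the strict positivity of $m$ (the latter following from the strong maximum principle applied to the nonlinear Fokker--Planck equation in \eqref{mfg}, since $m_0$ is bounded below by a positive constant).

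For \textbf{uniqueness}, take two solutions $(v_1,\rho_1)$ and $(v_2,\rho_2)$ and set $(\tilde v,\tilde\rho)=(v_1-v_2,\rho_1-\rho_2)$. By linearity, $(\tilde v,\tilde\rho)$ satisfies \eqref{linearized system} with $\tilde\rho(t_0,\cdot)=0$ and $\tilde v(T,\cdot)=\tfrac{\delta\mathscr{G}}{\delta m}(\cdot,m(T))(\tilde\rho(T))$. I would compute $\tfrac{d}{dt}\int_{\mathbb{T}^n}\tilde v\,\tilde\rho\,dx$ by substituting from both equations; after integration by parts, the two $\nabla\tilde v\cdot\nabla\tilde\rho$ terms cancel, and the first-order $H_p(x,\nabla u)$ drift terms cancel as well, leaving
\begin{equation*}
    \frac{d}{dt}\int_{\mathbb{T}^n} \tilde v\,\tilde\rho\,dx = -\int_{\mathbb{T}^n} m\, D^2_{pp}H(x,\nabla u)\,\nabla \tilde v\cdot\nabla \tilde v\,dx - \int_{\mathbb{T}^n}\frac{\delta \mathscr{F}}{\delta m}(x,m(t))(\tilde\rho(t))\,\tilde\rho(t,x)\,dx.
\end{equation*}
Integrating from $t_0$ to $T$ and using the boundary conditions yields
\begin{equation*}
    \int_{\mathbb{T}^n}\!\!\int_{\mathbb{T}^n}\frac{\delta \mathscr{G}}{\delta m}\tilde\rho(T)\tilde\rho(T)\,dx\,dy + \int_{t_0}^T\!\!\int_{\mathbb{T}^n} m\, D^2_{pp}H\,|\nabla \tilde v|^2\,dx\,dt + \int_{t_0}^T\!\!\int_{\mathbb{T}^n}\!\!\int_{\mathbb{T}^n}\frac{\delta \mathscr{F}}{\delta m}\tilde\rho\tilde\rho\,dx\,dy\,dt = 0,
\end{equation*}
where the three terms are precisely the linearizations of the monotonicity inequalities and the ellipticity of the Hamiltonian. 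Each is nonnegative, so all three vanish. Ellipticity of $D^2_{pp}H$ together with $m>0$ forces $\nabla\tilde v\equiv 0$; the backward equation for $\tilde v$ with trivial source and trivial terminal trace then gives $\tilde v\equiv 0$, and finally the forward Fokker--Planck equation for $\tilde\rho$ with vanishing initial datum and source forces $\tilde\rho\equiv 0$.

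For \textbf{existence}, I would construct a solution operator $\Phi:\rho\mapsto\rho'$ as follows. Given $\rho$ in a parabolic H\"older space $C^{\alpha/2,\alpha}([t_0,T]\times\mathbb{T}^n)$, the backward equation for $v$ is a linear parabolic equation with coefficients $H_p(x,\nabla u)$ that are H\"older continuous thanks to the regularity of the MFG solution $(u,m)$, and with source and terminal data bounded in the right H\"older norms by the regularity assumptions on $\tfrac{\delta\mathscr{F}}{\delta m}$ and $\tfrac{\delta\mathscr{G}}{\delta m}$; classical Schauder theory produces a unique $v\in C^{1+\alpha/2,2+\alpha}$ with linear estimates. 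Plugging this $v$ into the forward equation yields a linear divergence-form parabolic equation for $\rho'$, again uniquely solvable by Schauder theory. The resulting map $\Phi$ is linear and compact on $C^{\alpha/2,\alpha}$ by the standard gain-of-derivative in the Schauder estimates. A solution of \eqref{linearized system} is exactly a fixed point of $\Phi$, and by the Fredholm alternative the uniqueness of the homogeneous problem proved above implies existence.

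The \textbf{main obstacle} is the forward--backward coupling between $v$ and $\rho$: the source of the backward $v$-equation is driven by the entire trajectory of $\rho$, while the drift of the forward $\rho$-equation involves $\nabla v$ through the diffusion term $m\,D^2_{pp}H\,\nabla v$, so neither equation can be solved in isolation and naive Picard iteration does not close uniformly on $[t_0,T]$. The resolution is precisely the monotonicity$+$ellipticity identity above: it provides a global (time-length independent) bilinear estimate that delivers uniqueness and, via Fredholm, existence in one stroke, bypassing short-time contraction obstructions.
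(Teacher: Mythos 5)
The paper offers no proof of this statement—it is imported verbatim from Section 3.3 of \cite{Linear_in_Tn}—so the comparison is with the argument in that reference, and your proposal follows essentially the same route: the crossed duality quantity $\int_{\mathbb{T}^n}\tilde v\,\tilde\rho\,dx$, combined with the monotonicity of $\mathscr{F},\mathscr{G}$ and the ellipticity of $D^2_{pp}H$ weighted by $m>0$, yields the uniqueness/a priori estimate, and existence is then obtained by a fixed-point device (a continuation-type argument in \cite{Linear_in_Tn}, Schauder compactness plus the Fredholm alternative in your write-up; these are interchangeable once the duality estimate is available, and your remark that $\Phi$ is affine with compact linear part is exactly what Fredholm needs).

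Two details need tightening. First, the nonnegativity of $\iint\frac{\delta\mathscr{F}}{\delta m}(x,m(t),y)\tilde\rho(t,y)\tilde\rho(t,x)\,dy\,dx$ and of the corresponding $\mathscr{G}$-term is the linearization of the monotonicity inequality only along zero-mass perturbations; you should record that $\int_{\mathbb{T}^n}\tilde\rho(t,x)\,dx=0$ for all $t$, which follows from $\tilde\rho(t_0,\cdot)=0$ and the divergence form of the second equation. Second, the concluding order in the uniqueness step is reversed: after obtaining $\nabla\tilde v\equiv 0$ you cannot yet assert that the backward equation has ``trivial source,'' since the vanishing of the time-integrated bilinear form does not force $\frac{\delta\mathscr{F}}{\delta m}(x,m(t))(\tilde\rho(t))$ to vanish pointwise (monotonicity is not assumed strict). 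The correct sequence is: once $\nabla\tilde v\equiv 0$, the term $\nabla\cdot\bigl(m\,D^2_{pp}H(x,\nabla u)\nabla\tilde v\bigr)$ drops out of the Fokker--Planck equation, which then has zero initial datum and hence $\tilde\rho\equiv 0$; only then are the source and the terminal trace of the $\tilde v$-equation zero, and the maximum principle gives $\tilde v\equiv 0$. With this reordering (and the routine Schauder bookkeeping, including compatibility of the smooth data, for the compactness of your map $\Phi$), the argument is sound and consistent with the cited proof.
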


The connection between the derivative $\frac{\delta U}{\delta m}$ and the linearized system is established by the following result \cite{Linear_in_Tn}. 

\begin{theorem}[Section 3.4 in \cite{Linear_in_Tn}]\label{Theorem-key2}
    Let $U(t,x,m)$ be the solution of the Master Equation \eqref{master_equation}, and let $v(t_0,x)$ be the solution of the linearized system \eqref{linearized system} with the initial condition $\rho(t_0,\cdot) = \mu_0$. We have 
    \begin{equation}\label{key2}
        v(t_0,x) = \int_{\mathbb{T}^n} \dfrac{\delta U}{\delta m} (t_0,x,m_0,y) \mu_0(y)  dy.
    \end{equation}
\end{theorem}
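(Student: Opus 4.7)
The plan is to realize $v(t_0,x)$ as the first-order measure variation of the value function $U$, by comparing two MFG solutions issued from nearby initial measures and controlling the remainder at quadratic order. Fix $(t_0,x) \in [0,T]\times\mathbb{T}^n$ and two initial distributions $m_0^1,m_0^2 \in \mathcal{P}(\mathbb{T}^n)$, and let $(u_i,m_i)$ denote the solutions of the MFG system \eqref{mfg} with $m_i(t_0,\cdot)=m_0^i$. Let $(v,\rho)$ solve the linearized system \eqref{linearized system} around $(u_2,m_2)$ with $\rho(t_0,\cdot)=m_0^1-m_0^2$. Since $U(t_0,x,m_0^i)=u_i(t_0,x)$, the statement \eqref{key2} will follow once we show
\[
u_1(t_0,x)-u_2(t_0,x) \;=\; v(t_0,x) \;+\; O\!\left([\mathfrak{D}(m_0^1,m_0^2)]^2\right),
\]
identify $v(t_0,x;\delta_y)$ with $\frac{\delta U}{\delta m}(t_0,x,m_0^2,y)$ up to an additive constant via Lemma~\ref{fun-solution}, and then fix the constant by the normalization \eqref{norm_tobe_0}.

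First I would set up the remainder system. Defining $z\equiv u_1-u_2-v$ and $\eta\equiv m_1-m_2-\rho$, and subtracting the HJB and FP equations pairwise, a direct computation produces a coupled linear system for $(z,\eta)$ with the same principal part as \eqref{linearized system}, but with source terms given by Taylor remainders of second order. For the HJB side, the relevant remainder is
\[
R_1 \;\equiv\; H(x,\nabla u_1)-H(x,\nabla u_2)-H_p(x,\nabla u_2)\cdot\nabla v \;-\; \bigl[\mathscr{F}(x,m_1)-\mathscr{F}(x,m_2)-\tfrac{\delta\mathscr{F}}{\delta m}(x,m_2(t))(\rho)\bigr],
\]
and for the FP side an analogous structure involving the difference $m_1 H_p(x,\nabla u_1)-m_2 H_p(x,\nabla u_2)-\rho H_p(x,\nabla u_2)-m_2 D^2_{pp}H(x,\nabla u_2)\nabla v$, plus the terminal datum built from $\mathscr{G}$. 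Using the regularity of $H$, the uniform ellipticity $0<D^2_{pp}H\le CI_n$, and the second-order regularity and Lipschitz bounds on $\frac{\delta\mathscr{F}}{\delta m}$ and $\frac{\delta\mathscr{G}}{\delta m}$, together with the stability estimate $\sup_t \mathfrak{D}(m_1(t),m_2(t))+\|u_1-u_2\|_{C^{1+\alpha}}\le C\,\mathfrak{D}(m_0^1,m_0^2)$ available from \cite{Linear_in_Tn}, each remainder is controlled by $C[\mathfrak{D}(m_0^1,m_0^2)]^2$ in the appropriate norm.

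Next, I would close a well-posedness and energy-type estimate for the coupled forward-backward system satisfied by $(z,\eta)$. This is the same kind of linear system as \eqref{linearized system}, but with zero initial/terminal data and forcing of quadratic size; invoking the linear theory recalled in \cite{Linear_in_Tn} (which crucially uses the monotonicity of $\mathscr{F}$ and $\mathscr{G}$ to obtain a priori $L^\infty$ bounds on $z$ in terms of the data), one deduces
\[
\|z\|_\infty + \sup_{t\in[t_0,T]}\|\eta(t)\|_{-(1+\alpha)} \;\le\; C\,[\mathfrak{D}(m_0^1,m_0^2)]^2.
\]
Combined with the linearity of \eqref{linearized system}, this lets us define $S(t_0,x,m_0,y)\equiv v(t_0,x;\delta_y)$, with the displayed estimate yielding
\[
U(t_0,x,m_0^1)-U(t_0,x,m_0^2) \;=\; \int_{\mathbb{T}^n} S(t_0,x,m_0^2,y)\,d(m_0^1-m_0^2)(y) \;+\; O\!\left([\mathfrak{D}(m_0^1,m_0^2)]^2\right),
\]
so that $S$ coincides with $\frac{\delta U}{\delta m}$ up to an additive constant in $y$. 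Finally, the normalization \eqref{norm_tobe_0} is verified by taking $\mu_0=m_0$ in \eqref{linearized system}: the pair $(0,m)$ solves it, since $\frac{\delta\mathscr{F}}{\delta m}(x,m(t))(m(t))$ and $\frac{\delta\mathscr{G}}{\delta m}(x,m(T))(m(T))$ vanish by the defining constant convention for measure derivatives, which forces $\int_{\mathbb{T}^n} S(t_0,x,m_0,y)\,dm_0(y)=0$ and fixes the constant.

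The main obstacle I anticipate is the quadratic remainder estimate for the coupled $(z,\eta)$ system. Unlike the DPE analysis of Section~\ref{ipmfcdpe}, where the adjoint FP equation decouples from the backward equation, here the linearization truly couples HJB and FP through $H_p$ and $D^2_{pp}H$, and the remainders $R_1$ and its FP counterpart mix gradient differences of $u_1-u_2$ with measure differences $m_1-m_2$. Propagating the quadratic bound therefore requires combining the forward Lipschitz stability of the nonlinear MFG system with a backward Hölder estimate for the linearized HJB, and then reinjecting into the FP equation through Lemma~\ref{FP-eq-wp}-type duality; the monotonicity of $\mathscr{F}$ and $\mathscr{G}$ is essential to prevent the coupled estimate from blowing up in $T$.
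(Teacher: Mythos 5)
The paper does not actually prove this statement: it is imported verbatim from \cite{Linear_in_Tn} (Section 3.4), and the only internal analogue is the proof of Theorem~\ref{diff_wrt_m} for the decoupled DPE in Section~\ref{differentiability of U}. Your proposal reconstructs essentially the argument of the cited source (and mirrors the paper's Section~4 template): compare two MFG solutions with nearby initial measures, linearize around one of them, show the remainder $(z,\eta)=(u_1-u_2-v,\,m_1-m_2-\rho)$ is $O([\mathfrak{D}(m_0^1,m_0^2)]^2)$ using the Lipschitz stability of the nonlinear system together with the second-order regularity of $\mathscr{F},\mathscr{G}$ and monotonicity, extract the kernel by taking $\mu_0=\delta_y$, and fix the additive constant by the normalization \eqref{norm_tobe_0}. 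That is the right route, and you correctly identify the genuinely hard point (the coupled forward-backward remainder estimate), so in substance your plan matches the proof you would find in \cite{Linear_in_Tn}.

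One caveat on the supporting references you invoke: Lemma~\ref{fun-solution} and Lemma~\ref{FP-eq-wp} of this paper are stated for the DPE auxiliary system \eqref{corr.sys.linear} and for a standalone Fokker--Planck equation with bounded drift, respectively; neither applies verbatim to the coupled linearized MFG system \eqref{linearized system}, where the FP part is forced by $\nabla\cdot(m\,D^2_{pp}H\,\nabla v)$ and the two equations cannot be solved sequentially. What your argument actually needs is the well-posedness of \eqref{linearized system} with initial data in a negative H\"older space $C^{-(1+\alpha)}$ (so that $\mu_0=\delta_y$ and its derivatives are admissible), together with uniform bounds giving $y\mapsto v(t_0,x;\delta_y)$ regularity of class $C^{1+\alpha}$; these are precisely the estimates of \cite{Linear_in_Tn}, Sections 3.2--3.4, obtained by a duality scheme that hinges on the monotonicity of $\mathscr{F}$ and $\mathscr{G}$. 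With those coupled-system analogues substituted for the decoupled lemmas you cite, your outline is a faithful reconstruction of the standard proof.
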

Finally, we define the static solution of the MFG system. 

\begin{definition}\label{static-solution}
    A pair $(u^{(0)}, m^{(0)})$ is called a \emph{static solution} of the MFG system \eqref{mfg} if it satisfies
    \begin{equation*}
        \begin{cases}
            \gamma - \Delta u^{(0)}(x) + H(x, \nabla u^{(0)}) = \mathscr{F}(x, m^{(0)}(x)), & \text{in } \mathbb{T}^n,\smallskip \\
            -\Delta m^{(0)}(x) - \operatorname{div}\left(m^{(0)}(x) H_p(x, \nabla u^{(0)})\right) = 0, & \text{in } \mathbb{T}^n,\smallskip\\
         \displaystyle{   \int_{\mathbb{T}^n}m^{(0)}(x)\,dx=1,} 
        \end{cases}  
    \end{equation*}
    for some constant $\gamma \in \mathbb{R}$.
    
    \smallskip
    Note that if $(u^{(0)}, m^{(0)})$ is a static solution of \eqref{mfg}, then the pair $(u, m) = (u^{(0)}(x) + \lambda(T-t), m^{(0)})$ is a solution of \eqref{mfg} for an appropriately chosen initial data $m_0(x)$ and a terminal cost $\mathscr{G}$. In this case, we say that $(u, m)$ is a \emph{quasi-static solution} of \eqref{mfg}.
\end{definition}
The existence of such static solutions under our regularity assumptions is established in \cite{cardaliaguet2015master}.

\subsection{Auxiliary results}

In this subsection, we establish several auxiliary results that will be essential for our study of inverse problems. Recall that we interpret $\frac{\delta \mathscr{F}}{\delta m}(x,m)$ as an operator defined by 
\begin{equation}
    \frac{\delta \mathscr{F}}{\delta m}(x,m)(\mu) \equiv \int_{\mathbb{T}^n} \frac{\delta \mathscr{F}}{\delta m}(x,m,y) \mu(y)  dy.
\end{equation}
We also define $Q\equiv(0,T)\times\mathbb{T}^n.$
\begin{lemma}\label{well_pose}
   Let $(u^{(0)},m^{(0)})$ be a static solution of $\eqref{mfg}$. Consider two solutions $(u_1, m_1)$ and $(u_2, m_2)$ of the linear system, 
    \begin{equation}\label{1st order_for_unique}
        \begin{cases}
            -\partial_t u - \Delta u + H_p(x, \nabla u^{(0)}) \cdot \nabla u = \dfrac{\delta \mathscr{F}}{\delta m} (x, m^{(0)})(m),  &\text{ in } (0,T)\times\mathbb{T}^n,\\
            \partial_t m - \Delta m - \nabla \cdot \left(m^{(0)} D^2_{pp}H(x, \nabla u^{(0)}) \nabla u\right) - \nabla \cdot \left(m H_p(x, \nabla u^{(0)})\right) = 0, &\text{ in } (0,T)\times\mathbb{T}^n,\\
            m(0, x) = f(x),   &\text{ in } \mathbb{T}^n, \\
            u(T, x) = g(x),  &\text{ in } \mathbb{T}^n.
        \end{cases}
    \end{equation}
    Then $(u_1, m_1) = (u_2, m_2)$ so that the solution of this system is unique.
\end{lemma}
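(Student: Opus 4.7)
I would carry out a Lasry--Lions type energy/monotonicity argument. By linearity, it suffices to show that if $(\bar u,\bar m):=(u_1-u_2,m_1-m_2)$ solves the system~\eqref{1st order_for_unique} with trivial data $\bar m(0,\cdot)\equiv 0$ and $\bar u(T,\cdot)\equiv 0$, then $\bar u\equiv 0$ and $\bar m\equiv 0$. Since the second equation in~\eqref{1st order_for_unique} is in divergence form, mass is preserved on the torus, so $\int_{\mathbb{T}^n}\bar m(t,x)\,dx=0$ for every $t\in[0,T]$; this will be crucial for linearizing the monotonicity of $\mathscr{F}$.

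\textbf{Energy identity.} The central calculation is to differentiate the pairing $t\mapsto\int_{\mathbb{T}^n}\bar u(t,x)\,\bar m(t,x)\,dx$ and substitute the two equations. After integrating by parts on $\mathbb{T}^n$, the second-order terms $\int(-\Delta\bar u)\bar m\,dx$ and $\int\bar u\,\Delta\bar m\,dx$ cancel, and the first-order drift terms $\int H_p(x,\nabla u^{(0)})\cdot\nabla\bar u\,\bar m\,dx$ and $\int\bar u\,\nabla\cdot(\bar m\,H_p(x,\nabla u^{(0)}))\,dx$ also cancel, leaving the clean identity
\begin{equation*}
\frac{d}{dt}\int_{\mathbb{T}^n}\bar u\,\bar m\,dx
= -\int_{\mathbb{T}^n}\frac{\delta\mathscr{F}}{\delta m}(x,m^{(0)})(\bar m)\,\bar m(x)\,dx
 -\int_{\mathbb{T}^n} m^{(0)}(x)\,\langle D^2_{pp}H(x,\nabla u^{(0)})\nabla\bar u,\nabla\bar u\rangle\,dx.
\end{equation*}
Integrating over $t\in(0,T)$ and using $\bar u(T)\equiv 0$ and $\bar m(0)\equiv 0$, the left-hand side vanishes, so both non-negative quantities on the right must integrate to zero.

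\textbf{Closing the argument.} The uniform ellipticity $D^2_{pp}H>0$ together with strict positivity of the static density $m^{(0)}>0$ (which follows from the strong maximum principle applied to the stationary Fokker--Planck equation in Definition~\ref{static-solution}) then forces $\nabla\bar u\equiv 0$ on $(0,T)\times\mathbb{T}^n$. Substituting $\nabla\bar u\equiv 0$ into the second equation of~\eqref{1st order_for_unique} reduces it to the linear parabolic equation $\partial_t\bar m-\Delta\bar m-\nabla\cdot(\bar m\,H_p(x,\nabla u^{(0)}))=0$ with zero initial data, so classical parabolic uniqueness yields $\bar m\equiv 0$. Putting $\nabla\bar u=0$ and $\bar m=0$ back into the first equation leaves $\partial_t\bar u\equiv 0$, so $\bar u$ is a space-time constant; combined with $\bar u(T,\cdot)\equiv 0$, this gives $\bar u\equiv 0$.

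\textbf{Main technical point.} The only delicate step is converting the assumed monotonicity of $\mathscr{F}$ into non-negativity of the quadratic form $\bar m\mapsto\int_{\mathbb{T}^n}\frac{\delta\mathscr{F}}{\delta m}(x,m^{(0)})(\bar m)\,\bar m(x)\,dx$ for the zero-mass signed density $\bar m$. This is handled by a standard expansion: apply Theorem~\ref{taylor} to $\int_{\mathbb{T}^n}(\mathscr{F}(x,m^{(0)}+s\bar m)-\mathscr{F}(x,m^{(0)}))\,d(s\bar m)(x)\geq 0$ in the parameter $s$; the leading $s^2$ coefficient is precisely the required quadratic form, and the zero-mass property of $\bar m$ keeps $m^{(0)}+s\bar m$ a legitimate perturbation within the relevant class for small $s>0$. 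The higher-order regularity hypotheses on $\mathscr{F}$ stated at the beginning of this section justify the expansion and the integrations by parts used above.
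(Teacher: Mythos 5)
Your proposal is correct and follows essentially the same route as the paper: the Lasry--Lions duality/energy identity for $\int_{\mathbb{T}^n}\bar u\,\bar m\,dx$, cancellation of the second-order and drift terms, and then monotonicity of $\mathscr{F}$ (applied to the zero-mass perturbation $m^{(0)}+s\bar m$) together with uniform ellipticity of $D^2_{pp}H$ and positivity of $m^{(0)}$ to force both quadratic terms to vanish. Your closing steps (strict positivity of $m^{(0)}$ via the maximum principle, $\nabla\bar u\equiv 0$, then $\bar m\equiv 0$ by forward parabolic uniqueness and $\bar u\equiv 0$ from the terminal condition) simply spell out the paper's terse final sentence, so this is the same proof with slightly more detail rather than a different argument.
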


\begin{proof}
Since the system is linear, it suffices to show that $(u, m) = (0, 0)$ when $(f, g) = (0, 0)$. Recall that $Q=(0,T)\times\mathbb{T}^n$. We carry out the following computation, 
    \begin{equation*}
        \begin{aligned}
          &  \int_{Q} \left(\partial_t u  m + \partial_t m  u\right) dx  dt \\
            =&  \int_{Q} \left(-\Delta u + H_p(x, \nabla u^{(0)}) \cdot \nabla u - \dfrac{\delta \mathscr{F}}{\delta m} (x, m^{(0)})(m)\right) m  dx  dt \\
            &\quad + \int_{Q} \left(\Delta m + \nabla \cdot \left[m^{(0)} D^2_{pp}H(x, \nabla u^{(0)}) \nabla u\right] + \nabla \cdot \left(m H_p(x, \nabla u^{(0)})\right)\right) u  dx  dt \\
            =& \int_{Q} \left[-\dfrac{\delta \mathscr{F}}{\delta m} (x, m^{(0)})(m) m + (-\Delta u  m + \Delta m  u)\right] dx  dt \\
            &\quad + \int_{Q} \bigg[m H_p(x, \nabla u^{(0)}) \cdot \nabla u + \nabla \cdot \left(m H_p(x, \nabla u^{(0)})\right) u\\
            & + \nabla \cdot \left[m^{(0)} D^2_{pp}H(x, \nabla u^{(0)}) \nabla u\right] u\bigg] dx  dt \\
            =&  \int_{Q} \left[-\dfrac{\delta \mathscr{F}}{\delta m} (x, m^{(0)})(m) m - m^{(0)} D^2_{pp}H(x, \nabla u^{(0)}) |\nabla u|^2\right] dx  dt.
        \end{aligned}
    \end{equation*}

    If $(f, g) = (0, 0)$, then we have 
    \begin{equation}
        \int_{Q} \left(\partial_t u  m + \partial_t m  u\right) dx  dt = \left(\int_{\mathbb{T}^n} u(t,x) m(t,x)  dx \right)\Big|_0^T = 0.
    \end{equation}
    Moreover, the second equation in system \eqref{1st order_for_unique} implies that 
    \begin{equation}
        \int_{\mathbb{T}^n} m(t,x)  dx = 0 \quad \text{for all } t \in (0,T).
    \end{equation}
    Therefore, we obtain 
    \begin{equation*}
        \begin{aligned}
            &\int_{Q} \dfrac{\delta \mathscr{F}}{\delta m} (x, m^{(0)})(m) m(t,x)  dx  dt \\
            =& \int_0^T \int_{\mathbb{T}^n} \lim_{s \to 0^+} \left[ \int_{\mathbb{T}^n} \frac{1}{s} \mathscr{F}(x, m^{(0)} + s m) m(t,y)  dy \right] m(t,x)  dx  dt \\
            =& \int_0^T \int_{\mathbb{T}^n} \lim_{s \to 0^+} \left[ \int_{\mathbb{T}^n} \frac{1}{s} \left(\mathscr{F}(x, m^{(0)} + s m) - \mathscr{F}(x, m^{(0)})\right) m(t,x)  dy \right] m(t,x)  dx  dt \\
            \geq &\, 0,
        \end{aligned}
    \end{equation*}
    where the inequality follows from the monotonicity property. Since $m^{(0)}(x) \geq 0$ and $D^2_{pp}H(x, \nabla u^{(0)})$ is positive definite, we conclude that $m = 0$ and consequently $u = 0$. This establishes the uniqueness of the solution.
\end{proof}
\begin{corollary}\label{Comparing principle}
    Under the same conditions as Lemma \ref{well_pose}, if $(u,m)$ satisfies the system, 
    \begin{equation*}\label{eq:Comparing principle}
        \begin{cases}
            -\partial_t u - \Delta u + H_p(x, \nabla u^{(0)}) \cdot \nabla u = \dfrac{\delta \mathscr{F}}{\delta m} (x, m^{(0)})(m), &\text{ in } (0,T)\times\mathbb{T}^n,\\
            \partial_t m - \Delta m - \nabla \cdot \left(m^{(0)} D^2_{pp}H(x, \nabla u^{(0)}) \nabla u\right) - \nabla \cdot \left(m H_p(x, \nabla u^{(0)})\right) = 0, &\text{ in } (0,T)\times\mathbb{T}^n,\\
            m(0,x) = 0, &\text{ in } \mathbb{T}^n,\\
            m(T,x) = 0,&\text{ in } \mathbb{T}^n, 
        \end{cases}
    \end{equation*}
    then $(u,m) = (0,0)$. This result follows by applying the same energy method used in the proof of Lemma \ref{well_pose}.
\end{corollary}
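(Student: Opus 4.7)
The plan is to mimic the energy argument from Lemma \ref{well_pose}, with the key observation that the boundary term in the integration-by-parts identity vanishes for a different reason than before: in Lemma \ref{well_pose} the terms at $t=0$ and $t=T$ vanished because $m(0,\cdot)=0$ and $u(T,\cdot)=0$, whereas here both endpoint terms vanish because $m(0,\cdot)=m(T,\cdot)=0$.

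Concretely, I would first multiply the first equation of the system by $m$ and the second by $u$, integrate over $Q=(0,T)\times\mathbb{T}^n$, and add. After integration by parts in $x$ (using the periodicity of $\mathbb{T}^n$) and in $t$, the cross terms involving $\nabla\cdot\left(m^{(0)} D^2_{pp}H(x,\nabla u^{(0)})\nabla u\right)$ and $\nabla\cdot\left(mH_p(x,\nabla u^{(0)})\right)$ cancel against the transport and diffusion terms exactly as in Lemma \ref{well_pose}. This produces the identity
\begin{equation*}
\int_{\mathbb{T}^n} u(t,x)m(t,x)\,dx\Big|_{t=0}^{t=T}=-\int_{Q}\frac{\delta\mathscr{F}}{\delta m}(x,m^{(0)})(m)\,m\,dx\,dt-\int_{Q} m^{(0)}\,D^2_{pp}H(x,\nabla u^{(0)})\nabla u\cdot\nabla u\,dx\,dt.
\end{equation*}
The left-hand side is zero since $m(0,\cdot)=m(T,\cdot)=0$. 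On the right, the first term is nonnegative by the monotonicity of $\mathscr{F}$ (argued via the limit representation used in the proof of Lemma \ref{well_pose}), and the second term is nonpositive since $m^{(0)}\ge 0$ and $D^2_{pp}H>0$. Combined with the vanishing of the sum, this forces both terms to be zero.

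From the strict positivity of $m^{(0)}$ and the uniform ellipticity $D^2_{pp}H>0$, I then conclude $\nabla u\equiv 0$ on $Q$, so $u(t,x)=c(t)$ is independent of $x$. Substituting $\nabla u=0$ into the second equation gives a forward Fokker-Planck-type equation for $m$ with zero initial data, whose unique distributional solution (cf.\ Lemma \ref{FP-eq-wp}) is $m\equiv 0$. Plugging $\nabla u=0$ and $m=0$ back into the first equation yields $c'(t)=0$, so $u$ reduces to an additive constant, which can be normalized to zero (the normalization being consistent with the convention implicit in the definition of $\frac{\delta U}{\delta m}$ via \eqref{norm_tobe_0}, under which additive constants in $u$ are absorbed). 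This gives $(u,m)=(0,0)$.

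The main technical point is verifying that the boundary and cross terms in the integration by parts behave exactly as in Lemma \ref{well_pose} despite the swapped endpoint conditions; once that identity is in place, the monotonicity-plus-ellipticity trichotomy carries over verbatim, and the only genuinely new step is the forward Fokker-Planck uniqueness used after the energy argument has pinned down $\nabla u$. No further obstacle should arise, since all the ingredients, namely the monotonicity of $\mathscr{F}$, the positivity of $m^{(0)}$, and the strict convexity of $H$ in $p$, are already assumed throughout the section.
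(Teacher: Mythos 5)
Your proof follows exactly the paper's suggested route (the energy method of Lemma~\ref{well_pose}, with the boundary term now vanishing because $m(0,\cdot)=m(T,\cdot)=0$), and the substantive chain of deductions---vanishing boundary term, sign analysis of the two quadratic quantities, $\nabla u\equiv 0$, Fokker--Planck uniqueness for $m$, and then $u$ constant---is correct. Two wrinkles are worth flagging. First, a sign slip in the analysis of the identity: you label one term nonnegative and the other nonpositive and then say vanishing of the sum forces both to be zero, which does not follow with those signs. The correct statement is that both quantities
$\int_Q \frac{\delta\mathscr{F}}{\delta m}(x,m^{(0)})(m)\,m\,dx\,dt$ and $\int_Q m^{(0)}D^2_{pp}H(x,\nabla u^{(0)})|\nabla u|^2\,dx\,dt$
are nonnegative (the first by monotonicity, the second by positivity), the right-hand side of your identity is the negative of their sum, and the left-hand side vanishes, so both must vanish. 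Second, the final normalization step is not justified as written: condition \eqref{norm_tobe_0} normalizes $\frac{\delta U}{\delta m}$, not solutions $u$ of this linearized system, and indeed the system as stated determines $u$ only up to an additive constant ($(u+c,m)$ is always another solution). Your energy argument actually yields $(\nabla u,m)=(0,0)$ and $u$ constant in $t$ and $x$; this is all that is used downstream in Theorem~\ref{Main2} (where only $\rho\equiv 0$ is invoked), and it is what the paper's terse corollary is implicitly asserting.
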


In the proof of our main theorem, we also need to analyze the quasi-static solutions of the linearized system. The following existence result is crucial. 

\begin{lemma}\label{quasi_static}
    Let $(u^{(0)},m^{(0)})$ be a static solution of $\eqref{mfg}$. Then there exist initial and terminal conditions $f$ and $g$ such that the solution $(u,m)$ of the system, 
    \begin{equation}\label{1st order_for_exist}
        \begin{cases}
            -\partial_t u - \Delta u + H_p(x, \nabla u^{(0)}) \cdot \nabla u = \dfrac{\delta \mathscr{F}}{\delta m} (x, m^{(0)})(m),&\text{ in } (0,T)\times\mathbb{T}^n, \\
            \partial_t m - \Delta m - \nabla \cdot \left(m^{(0)} D^2_{pp}H(x, \nabla u^{(0)}) \nabla u\right) - \nabla \cdot \left(m H_p(x, \nabla u^{(0)})\right) = 0, &\text{ in } (0,T)\times\mathbb{T}^n,\\
            m(0,x) = f(x), &\text{ in } \mathbb{T}^n,\\
            u(T,x) = g(x),&\text{ in } \mathbb{T}^n, 
        \end{cases}
    \end{equation}
    satisfies the stationarity conditions, 
    \begin{equation*}
            \dfrac{d}{dt} \nabla u(t,x) = 0, \quad
            \dfrac{d}{dt} m(t,x) = 0, 
    \end{equation*}
    and, moreover, $m(t,x)$ does not vanish identically on any open subset $\Omega \subset \mathbb{T}^n$.
\end{lemma}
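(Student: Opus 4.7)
The plan is to look for a solution of \eqref{1st order_for_exist} in an explicit quasi-static ansatz of the form
\[
u(t,x) = u^{*}(x) + \lambda(T - t), \qquad m(t,x) = m^{*}(x),
\]
for some $u^{*}, m^{*} : \mathbb{T}^n \to \mathbb{R}$ and $\lambda \in \mathbb{R}$ to be determined, and then to set $f(x) = m^{*}(x)$ and $g(x) = u^{*}(x)$. This ansatz automatically enforces $\frac{d}{dt}\nabla u \equiv 0$ and $\frac{d}{dt} m \equiv 0$, and substituting it into \eqref{1st order_for_exist} collapses the parabolic system to the stationary elliptic system
\[
\begin{cases}
\lambda - \Delta u^{*} + H_p(x, \nabla u^{(0)}) \cdot \nabla u^{*} = \dfrac{\delta \mathscr{F}}{\delta m}(x, m^{(0)})(m^{*}), \\[1mm]
-\Delta m^{*} - \nabla \cdot \bigl(m^{(0)} D^{2}_{pp} H(x, \nabla u^{(0)}) \nabla u^{*}\bigr) - \nabla \cdot \bigl(m^{*} H_p(x, \nabla u^{(0)})\bigr) = 0
\end{cases}
\]
on $\mathbb{T}^n$. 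Once any nontrivial solution $(u^{*}, m^{*}, \lambda)$ of this elliptic system is exhibited, Lemma~\ref{well_pose} will guarantee that the ansatz is the unique solution of \eqref{1st order_for_exist} corresponding to the chosen data $(f, g)$.

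The next step is to produce an explicit solution of the elliptic system. The simplest choice is to take $u^{*} \equiv 0$ and $\lambda = 0$, which reduces the second equation to the static Fokker--Planck equation already satisfied by $m^{(0)}$ (Definition~\ref{static-solution}); thus $m^{*}(x) = c\, m^{(0)}(x)$ for any $c \neq 0$ solves it. For the first equation to close, one only needs $\frac{\delta \mathscr{F}}{\delta m}(x, m^{(0)})(c\, m^{(0)}) = 0$, which is immediate from the normalization convention \eqref{norm_tobe_0} built into the Lions derivative, namely $\int_{\mathbb{T}^n} \frac{\delta \mathscr{F}}{\delta m}(x, m^{(0)}, y)\, m^{(0)}(y)\, dy = 0$. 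Hence $(u^{*}, m^{*}, \lambda) = (0,\, c\, m^{(0)},\, 0)$ is a genuine quasi-static solution, corresponding to the data $f = c\, m^{(0)}$, $g = 0$.

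To verify the non-vanishing property, I would invoke the strong maximum principle (or equivalently the elliptic Harnack inequality) for the static Fokker--Planck equation. Written out, $m^{(0)}$ satisfies a second-order uniformly elliptic equation on $\mathbb{T}^n$ with smooth bounded coefficients, since the drift $H_p(x, \nabla u^{(0)})$ is bounded by the global Lipschitz property of $H$ together with the $C^{2+\alpha}$ regularity of $u^{(0)}$, and the zeroth-order term $\nabla \cdot H_p(x, \nabla u^{(0)})$ is continuous. If $m^{(0)}$ were to vanish at some point, the Harnack inequality for non-negative solutions on the connected compact manifold $\mathbb{T}^n$ would force $m^{(0)} \equiv 0$, contradicting $\int_{\mathbb{T}^n} m^{(0)}\, dx = 1$. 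Therefore $m^{(0)} > 0$ everywhere, and consequently $m^{*} = c\, m^{(0)}$ does not vanish on any open subset.

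The main delicate point I anticipate is precisely this last strict-positivity argument, since the zeroth-order coefficient of the Fokker--Planck operator does not a priori have a favorable sign, so the classical weak maximum principle does not apply directly; one must pass through the Harnack inequality (or recast the equation using an invariant measure) to reach the strict-positivity conclusion. Beyond this, the remaining steps are algebraic verification of the substitution and an application of Lemma~\ref{well_pose} to identify the quasi-static ansatz with the unique solution of \eqref{1st order_for_exist} under the prescribed data.
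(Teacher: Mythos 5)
Your proof is correct and arrives at the same place the paper does, but by a more explicit route. You observe that the stationary elliptic system is solved by the trivial triple $(u^{*},\lambda,m^{*})=(0,0,c\,m^{(0)})$: the second equation reduces to the static Fokker--Planck equation that $m^{(0)}$ already satisfies by Definition~\ref{static-solution}, and the first equation closes because the normalization \eqref{norm_tobe_0} gives $\int_{\mathbb{T}^n}\frac{\delta\mathscr{F}}{\delta m}(x,m^{(0)},y)\,m^{(0)}(y)\,dy=0$ (the paper explicitly uses this same convention in the proof of Theorem~\ref{diff_wrt_m}). The paper instead poses the stationary system \eqref{1st order-static} with the constraint $\int\rho=1$ and asserts existence ``via standard fixed-point arguments,'' then derives the non-vanishing property by contradiction through unique continuation. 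Since the energy identity in Lemma~\ref{well_pose}, applied to the time-independent version of the system, forces $\nabla v=0$ and $\rho$ proportional to $m^{(0)}$, the paper's abstract solution coincides with your explicit one; your approach makes the fixed-point step unnecessary and removes any ambiguity about what that stationary solution actually is.

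For the non-vanishing claim, your passage through the Harnack inequality (rather than the weak maximum principle, which indeed does not apply because $\nabla\cdot H_p(x,\nabla u^{(0)})$ has no favorable sign) is the right move, and it yields the strictly stronger conclusion $m^{(0)}>0$ everywhere, which in particular implies the lemma's statement that $m$ does not vanish on any open set. The coefficient boundedness you invoke is justified: $H_p$ is bounded by global Lipschitzness of $H$, and $\nabla\cdot H_p(x,\nabla u^{(0)})$ involves at most second derivatives of $u^{(0)}\in C^{2+\alpha}$ and derivatives of the smooth $H$. One small point worth noting in comparison: the paper's unique-continuation argument as written treats the second equation of \eqref{1st order-static} as an elliptic equation for $\rho$ alone, but that equation still carries the coupling term $\nabla\cdot(m^{(0)}D^2_{pp}H\nabla v)$; the argument really requires first establishing $\nabla v=0$ (as the energy method gives), at which point the equation becomes the genuine static Fokker--Planck equation for $\rho$. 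Your explicit construction sidesteps this entirely by taking $u^{*}\equiv 0$ from the start.
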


\begin{proof}
    Consider the stationary system, 
    \begin{equation}\label{1st order-static}
        \begin{cases}
            \eta - \Delta v + H_p(x, \nabla u^{(0)}) \cdot \nabla v = \dfrac{\delta \mathscr{F}}{\delta m} (x, m^{(0)})(\rho), & \text{in } \mathbb{T}^n, \\
            -\Delta \rho - \nabla \cdot \left(m^{(0)} D^2_{pp}H(x, \nabla u^{(0)}) \nabla v\right) - \nabla \cdot \left(\rho H_p(x, \nabla u^{(0)})\right) = 0, & \text{in } \mathbb{T}^n, \\
            \displaystyle \int_{\mathbb{T}^n} \rho(x)  dx = 1.
        \end{cases}
    \end{equation}
    The existence of a solution $(\eta, v(x), \rho(x))$ to this system can be established via standard fixed-point arguments.

    Define the following time-dependent functions, 
    \begin{equation}
        (u(t,x), m(t,x)) = (v(x) + \eta(T - t), \rho(x)).
    \end{equation}
    A direct computation shows that this pair satisfies system \eqref{1st order_for_exist} with the initial condition $f(x) = \rho(x)$ and the terminal condition $g(x) = v(x)$. By Lemma \ref{well_pose}, this solution is unique.

    Now suppose, by contradiction, that there exists an open set $\Omega \subset \mathbb{T}^n$ such that $m(t,x) = 0$ on $\Omega$. Since $m(t,x) = \rho(x)$ is time-independent and satisfies the second equation in \eqref{1st order-static}, the unique continuation principle for elliptic equations implies that $\rho(x) \equiv 0$ on $\mathbb{T}^n$. This contradicts the normalization condition $\int_{\mathbb{T}^n} \rho(x)  dx = 1$, completing the proof.
\end{proof}
\subsection{Inverse problems for the Master Equation using time boundary data}

In this subsection, we formulate and study inverse problems for the Master Equation $\eqref{master_equation}$ using time boundary data. We define the measurement map 
\begin{equation}
    \mathcal{N}_{\mathscr{F},H}(\mathscr{G}) = (u(0, \cdot), m(T, \cdot)),
\end{equation}
 where $(u, m)$ is the solution of the corresponding MFG system \eqref{mfg} with the running cost $\mathscr{F}(x, m)$, the Hamiltonian $H$, and an arbitrary initial distribution. Since $u(0, x) = U(0, x, m)$, the map $\mathcal{N}_{\mathscr{F},H}(\mathscr{G})$ can be understood as the time boundary data of $U(t, x, m)$ in the Wasserstein space. This formulation is analogous to that of Problem~\eqref{eq:ip1}.

In Section~\ref{ipmfcdpe}, we established that if two distinct dynamic programming equations yield identical measurement data, their corresponding systems must share the same static solution. For the Master Equation, however, deriving an analogous result requires stronger assumptions.

First, Theorem~\ref{Main2} below explicitly incorporates this requirement as an assumption. This can be interpreted as follows: given an MFG system that has stabilized to an unknown stationary state $(u^{(0)}, m^{(0)})$, we introduce perturbations and measure the resulting response data. We refer to \cite{Cauchydata-zs} for an extended discussion and additional examples within this framework.

Second, Theorem~\ref{Main3} in the following demonstrates that this requirement can be relaxed under additional conditions on the unknowns. Specifically, we show that under these conditions, systems generating identical measurement data must possess the same static solution. Consequently, we establish that these conditions enable simultaneous recovery of the Hamiltonian $H$.

\begin{theorem}\label{Main2}
    Consider two Master Equations with analytic running costs $\mathscr{F}_j(x,m)$ for $j=1,2$. Suppose that 
    \begin{enumerate}
        \item both systems share the same static solution $(m^{(0)}(x), u^{(0)}(x))$;
        \item $D^2_{pp}H(x,p)$ is independent of $p$;
        \item $\mathscr{F}_i(x,m)$ for $i=1,2$ depend on $m$ locally and admit the power series expansion
        \begin{equation}
            \mathscr{F}_i(x,m) = \sum_{k=0}^{\infty} \frac{1}{k!} \mathscr{F}_i^{(k)}(x) (m - m^{(0)})^k.
        \end{equation}
     \end{enumerate}
    Then 
    \begin{equation}
        \mathcal{N}_{\mathscr{F}_1,H}(\mathscr{G}) = \mathcal{N}_{\mathscr{F}_2,H}(\mathscr{G}) \quad \text{implies that} \quad \mathscr{F}_1(x,m) = \mathscr{F}_2(x,m).
    \end{equation}
\end{theorem}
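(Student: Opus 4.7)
The strategy is to implement the progressive variational scheme of Theorem~\ref{unique-time-indep} around the shared quasi-static trajectory $(u^{(0)} + \lambda(T-t),\, m^{(0)})$, where $\lambda$ is the common ergodic constant $\gamma$ (both $\gamma_i$ agree after integrating the shared stationary equation). Because each $\mathscr{F}_i$ is locally and analytically represented as $\mathscr{F}_i(x,m) = \sum_{k \geq 0} \frac{1}{k!}\mathscr{F}_i^{(k)}(x)(m - m^{(0)})^k$, the task reduces to proving $\mathscr{F}_1^{(k)}(x) = \mathscr{F}_2^{(k)}(x)$ for every $k \geq 0$ and every $x \in \mathbb{T}^n$. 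The case $k=0$ is immediate: the shared static solution forces $\gamma - \Delta u^{(0)} + H(x,\nabla u^{(0)}) = \mathscr{F}_i(x,m^{(0)})$ for both $i$, so $\mathscr{F}_1^{(0)} = \mathscr{F}_2^{(0)}$. Under local dependence, the $k$-th Lions derivative collapses to pointwise multiplication, $\frac{\delta^k \mathscr{F}_i}{\delta m^k}(x,m^{(0)})(\rho_1,\dots,\rho_k) = \mathscr{F}_i^{(k)}(x)\rho_1(x)\cdots\rho_k(x)$, which is the structural feature that allows pointwise recovery from integral identities.

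For the first-order step, I would set $m_0 = m^{(0)} + \varepsilon \mu_0$ with $\int\mu_0\,dx = 0$ and expand $u_i = u^{(0)} + \lambda(T-t) + \varepsilon v_i + O(\varepsilon^2)$, $m_i = m^{(0)} + \varepsilon\rho_i + O(\varepsilon^2)$. Then $(v_i,\rho_i)$ solves the linearized system~\eqref{1st order_for_unique}, and the crucial observation is that its coefficients coincide for $i=1,2$: the static pair $(u^{(0)}, m^{(0)})$ is shared, and $D^2_{pp}H$ depends only on $x$. The measurement data impose four boundary conditions on $(\tilde v,\tilde\rho) \equiv (v_1-v_2,\,\rho_1-\rho_2)$: $\tilde v(0,\cdot)=0$ from $u_1(0,\cdot)=u_2(0,\cdot)$, $\tilde\rho(T,\cdot)=0$ from $m_1(T,\cdot)=m_2(T,\cdot)$, $\tilde\rho(0,\cdot)=0$ trivially, and $\tilde v(T,\cdot) = \frac{\delta\mathscr{G}}{\delta m}(\cdot,m^{(0)})(\tilde\rho(T)) = 0$. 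The pair $(\tilde v,\tilde\rho)$ satisfies the same linearized system but with a non-homogeneous source $(\mathscr{F}_1^{(1)}(x) - \mathscr{F}_2^{(1)}(x))\rho_1(t,x)$ in the $\tilde v$ equation.

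I would then test the $\tilde v$ equation against $\rho_1$ and the $\tilde\rho$ equation against $v_1$, integrate by parts over $(0,T)\times\mathbb{T}^n$, and use the symmetry of $D^2_{pp}H$ together with the linearized equations satisfied by $(v_1,\rho_1)$ to cancel every bulk term; the four vanishing boundary traces eliminate all temporal boundary contributions. The resulting identity is
\begin{equation*}
    \int_0^T\!\!\int_{\mathbb{T}^n}\bigl(\mathscr{F}_1^{(1)}(x) - \mathscr{F}_2^{(1)}(x)\bigr)\,\rho_1(t,x)\,\rho_2(t,x)\,dx\,dt = 0,
\end{equation*}
valid for every admissible initial perturbation $\mu_0$. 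Writing $K(x) \equiv \mathscr{F}_1^{(1)}(x) - \mathscr{F}_2^{(1)}(x)$, I would invoke Lemma~\ref{quasi_static} to select $\mu_0$ making $(v_1,\rho_1)$ quasi-static, so that $\rho_1(t,x) = \rho_1^{\ast}(x)$ is time-independent and vanishes on no open subset of $\mathbb{T}^n$. Combined with the freedom to vary $\mu_0$ (and, if the data is allowed to vary with $\mathscr{G}$, with the freedom to vary the terminal cost), this would generate a sufficiently rich family of weighted products $\rho_1^{\ast}(x)\int_0^T\rho_2(t,x)\,dt$ to force $K \equiv 0$ pointwise on $\mathbb{T}^n$.

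The higher-order coefficients follow by induction using the $k$-fold mixed directional derivative of Lemma~\ref{mixed2}: assuming $\mathscr{F}_1^{(j)} = \mathscr{F}_2^{(j)}$ for all $j < k$, the $k$-th variation $U_i^{(k)}$ satisfies a linearized system whose source differs between the two models only through the term $(\mathscr{F}_1^{(k)} - \mathscr{F}_2^{(k)})$ contracted with a product of first-order densities, and the same duality-plus-quasi-static argument yields $\mathscr{F}_1^{(k)} = \mathscr{F}_2^{(k)}$. The analytic power-series representation then closes the argument. The main obstacle I anticipate is the passage from the bilinear integral identity to pointwise equality at first order: because $\rho_1$ and $\rho_2$ are coupled through two distinct linearized systems while sharing the same initial perturbation, they cannot be simultaneously rendered quasi-static, and establishing density of the resulting family of products $\{\rho_1^{\ast}\int_0^T\rho_2(t,\cdot)\,dt\}$ in a suitable test space—presumably via a Runge-type approximation or a unique-continuation argument building on Lemma~\ref{quasi_static}—is the delicate analytic core of the proof.
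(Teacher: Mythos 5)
Your overall scaffold—progressive variation around the shared quasi-static trajectory $(u^{(0)}+\gamma(T-t),m^{(0)})$, with induction on the order $k$ of the Taylor coefficient and Lemma~\ref{quasi_static} supplying a quasi-static first-order perturbation—matches the paper. But at the decisive first-order step your route diverges from the paper's, and the divergence is exactly where your argument has a genuine gap, which you yourself flag.

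The paper does \emph{not} derive a bilinear integral identity. After recording the four vanishing traces $\overline{u}(0,\cdot)=\overline{u}(T,\cdot)=0$, $\overline{m}(0,\cdot)=\overline{m}(T,\cdot)=0$, it exploits the quasi-staticity of $(u_2^{(1)},m_2^{(1)})$ in a way you did not consider: it differentiates the difference system \eqref{eq 1-eq 2} in time. Because $m_2^{(1)}$ is time-independent, the inhomogeneous source $-\bigl(\tfrac{\delta\mathscr{F}_1}{\delta m}-\tfrac{\delta\mathscr{F}_2}{\delta m}\bigr)(m_2^{(1)})$ is killed by $\partial_t$, so $(v,\rho)\equiv(\partial_t\overline{u},\partial_t\overline{m})$ solves the \emph{homogeneous} linearized MFG system \eqref{u_t,v_t} with $\rho(0,\cdot)=\rho(T,\cdot)=0$. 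The Lasry--Lions monotonicity energy argument (Lemma~\ref{well_pose} / Corollary~\ref{Comparing principle}) then forces $(v,\rho)\equiv(0,0)$, hence $\overline{m}\equiv 0$, and the second equation of \eqref{eq 1-eq 2} degenerates to $\operatorname{div}(m^{(0)}D^2_{pp}H\,\nabla\overline{u})=0$, which after testing against $\overline{u}$ gives $\nabla\overline{u}\equiv 0$. The first equation then delivers the pointwise identity $\bigl(\mathscr{F}_1^{(1)}-\mathscr{F}_2^{(1)}\bigr)m_2^{(1)}\equiv 0$ directly, and the non-degeneracy of $m_2^{(1)}$ from Lemma~\ref{quasi_static} concludes.

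Your proposal instead tests $\tilde v$ against $\rho_1$ and $\tilde\rho$ against $v_1$ to obtain $\int_0^T\!\!\int_{\mathbb{T}^n}\bigl(\mathscr{F}_1^{(1)}-\mathscr{F}_2^{(1)}\bigr)\rho_1\rho_2\,dx\,dt=0$, and then hopes to pass to pointwise equality by a density or Runge-type argument. This passage is not a technical loose end but the crux of the problem, and the obstruction you name is real: $\rho_1$ and $\rho_2$ are solutions of two \emph{distinct} linearized systems driven by the very coefficients $\mathscr{F}_1^{(1)},\mathscr{F}_2^{(1)}$ you are trying to identify, so the family of products $\rho_1\rho_2$ is not freely controllable by the prover, and there is no available mechanism to make it dense. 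The paper's time-derivative trick is precisely the device that sidesteps the need for any such density statement—it converts the problem into a uniqueness statement for the homogeneous system, which the monotonicity structure resolves. You did not identify this device, and without it your first-order step does not close; the higher-order induction, which rests on the first-order base case, is therefore also incomplete as stated.

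A smaller issue: your claim that $\gamma_1=\gamma_2$ follows ``after integrating the shared stationary equation'' is unjustified, since integration against $m^{(0)}$ leaves $\int\mathscr{F}_i^{(0)}m^{(0)}\,dx$ uncontrolled a priori. The paper instead obtains $\gamma_1=\gamma_2$ from the measurement map applied to the quasi-static solutions ($u_i(0,x)=u^{(0)}(x)+\gamma_i T$), and only then concludes $\mathscr{F}_1^{(0)}=\mathscr{F}_2^{(0)}$ from the shared stationary equation.
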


\begin{proof}
    Consider the following systems for $j = 1, 2$, 
    \begin{equation}\label{j=1,2for Fg}
        \begin{cases}
            -\partial_t u_j(t,x) - \Delta u_j(t,x) + H(x, \nabla u_j) = \mathscr{F}_j(x, m_j(t,x)), & \text{in } \mathbb{T}^n \times (0,T), \\
            \partial_t m_j(t,x) - \Delta m_j(t,x) - \operatorname{div}\left(m_j(t,x) H_p(x, \nabla u_j)\right) = 0, & \text{in } \mathbb{T}^n \times (0,T), \\
            u_j(T,x) = \mathscr{G}(x), & \text{in } \mathbb{T}^n, \\
            m_j(0,x) = m_0(x), & \text{in } \mathbb{T}^n.
        \end{cases}  		
    \end{equation}
    
    By assumption, the above two systems share the same static solution so that there exists a static solution $(m^{(0)}(x), u^{(0)}(x))$ satisfying the following two systems simultaneously, 
    \begin{equation}\label{static}
        \begin{cases}
            \gamma_j - \Delta u_j(x) + H(x, \nabla u_j) = \mathscr{F}_j(x, m_j(x)), & \text{in } \mathbb{T}^n, \\
            -\Delta m_j(x) - \operatorname{div}\left(m_j(x) H_p(x, \nabla u_j)\right) = 0, & \text{in } \mathbb{T}^n,\\
            \int_{\mathbb{T}^n} m_j(x)\,dx=1,
        \end{cases}  
    \end{equation}
    for some $\gamma_j \in \mathbb{R}$ with $j=1,2$.
    
    Choosing $m_j(0,x) = m^{(0)}(x)$ and $u_j(T,x) = u^{(0)}(x)$, the solutions of systems~\eqref{j=1,2for Fg} are given by 
    \begin{equation}
        (u_j, m_j) = (u^{(0)}(x) + \gamma_j(T - t), m^{(0)}(x)).
    \end{equation}

    Now consider the linearized system around the solution $(u_j, m_j) = (u^{(0)}(x) + \gamma_j(T - t), m^{(0)}(x))$. The first-order linearization system is 
    \begin{equation}\label{first order j=1,2}
        \begin{cases}
            -\partial_t u_j^{(1)} - \Delta u_j^{(1)} + H_p(x, \nabla u^{(0)}) \cdot \nabla u_j^{(1)} = \dfrac{\delta \mathscr{F}_j}{\delta m}(x, m^{(0)})(m_j^{(1)}), \\
            \partial_t m_j^{(1)} - \Delta m_j^{(1)} - \operatorname{div} \left(m_j^{(1)} H_p(x, \nabla u^{(0)})\right) - \operatorname{div} \left(m^{(0)} D^2_{pp} H(x, \nabla u_j^{(1)}) \nabla u_j^{(1)}\right) = 0, \\
            u^{(1)}(T,x) = 0, \\
            m^{(1)}(0,x) = \mu_0(x).
        \end{cases}    
    \end{equation}
    
    Define $\overline{u}(t,x) = u_1^{(1)}(t,x) - u_2^{(1)}(t,x)$ and $\overline{m}(t,x) = m_1^{(1)}(t,x) - m_2^{(1)}(t,x)$. Then we have 
    \begin{equation}\label{eq 1-eq 2}
        \begin{cases}
            -\partial_t \overline{u} - \Delta \overline{u} + H_p(x, \nabla u^{(0)}) \cdot \nabla \overline{u} - \dfrac{\delta \mathscr{F}_1}{\delta m}(x, m^{(0)})(\overline{m}) = \\
          \qquad  -\left(\dfrac{\delta \mathscr{F}_1}{\delta m}(x, m^{(0)}) - \dfrac{\delta \mathscr{F}_2}{\delta m}(x, m^{(0)})\right)(m_2^{(1)}), \\
            \partial_t \overline{m} - \Delta \overline{m} - \operatorname{div} \left(\overline{m} H_p(x, \nabla u^{(0)})\right) - \operatorname{div} \left(m^{(0)} D^2_{pp} H(x, \nabla u_1^{(1)}) \nabla \overline{u}\right) = R(t,x), \\
            \overline{u}(T,x) = 0, \\
            \overline{m}(0,x) = 0,
        \end{cases}
    \end{equation}
    where we have used our assumption that the Hessian $D^2_{pp} H(x,p)$ is independent of $p$ to get 
\[R(t,x) = \operatorname{div} \left(m^{(0)} \left(D^2_{pp} H(x, \nabla u_2^{(1)}) - D^2_{pp} H(x, \nabla u_1^{(1)})\right) \nabla u_2\right) = 0.\]
    
    Since $\mathcal{N}_{\mathscr{F}_1,H} = \mathcal{N}_{\mathscr{F}_2,H}$ by our assumption on the measurement map, we have 
    \begin{equation}
        u_1(0,x) = u_2(0,x) \quad \text{and} \quad m_1(T,x) = m_2(T,x),
    \end{equation}
    and consequently $U_1(0,x,m) = U_2(0,x,m)$.
    
    By Theorem~\ref{Theorem-key2},
    \begin{equation}
        u^{(1)}(0,x) = \int_{\mathbb{T}^n} \frac{\delta U_1}{\delta m}(0, m, y) \mu_0(y) \, dy = \int_{\mathbb{T}^n} \frac{\delta U_2}{\delta m}(0, m, y) \mu_0(y) \, dy = u^{(2)}(0,x),
    \end{equation}
    so that $\overline{u}(0,x) = 0.$ Using standard linearization methods (see \cite{liu2023inverse}), we also obtain 
        $\overline{m}(T,x) = 0.$
    
    According to Lemma~\ref{quasi_static}, there exist $m_2^{(1)}$ and $\nabla u_2^{(1)}$ that are independent of $t$, and there is no open set $\Omega \subset \mathbb{T}^n$ such that $m_2^{(1)} = 0$ in $\Omega$. Define $v(t,x) = \partial_t \overline{u}(t,x)$ and $\rho(t,x) = \partial_t \overline{m}(t,x)$. Then $(v, \rho)$ satisfies 
    \begin{equation}\label{u_t,v_t}
        \begin{cases}
            -\partial_t v - \Delta v + H_p(x, \nabla u^{(0)}) \cdot \nabla v - \dfrac{\delta \mathscr{F}_1}{\delta m}(x, m^{(0)})(\rho) = 0, \\
            \partial_t \rho - \Delta \rho - \operatorname{div} \left(\rho H_p(x, \nabla u^{(0)})\right) - \operatorname{div} \left(m^{(0)} D^2_{pp} H(x, \nabla u_1^{(1)}) \nabla v\right) = 0, \\
            \rho(0,x) = \rho(T,x) = 0.
        \end{cases}
    \end{equation}
    The condition $\rho(0,x) = \rho(T,x)$ is obtained by evaluating the second equation of~\eqref{first order j=1,2} at $t = 0$ and $t = T$.
    
    Lemma~\ref{well_pose} and Corollary~\ref{Comparing principle} imply that $\rho(t,x) = 0$, and therefore $\overline{m}(t,x)$ is independent of $t$. Since $\overline{m}(0,x) = 0$, we conclude that $\overline{m}(t,x) = 0$ for all $t$. Denote $m^{(1)}(x) \equiv m_1^{(1)} = m_2^{(1)}$.
    
    The second equation in~\eqref{eq 1-eq 2} then implies that 
    \begin{equation}\label{div=0}
        \operatorname{div} \left(m^{(0)}(x) D^2_{pp} H(x, \nabla u_1^{(1)}) \nabla_x \overline{u}(t,x)\right) = 0,
    \end{equation}
    for all $t \in (0,T)$.
  Since $D_{pp}^2H(x,p)>0$ for all $(x,p)\in\mathbb{T}^n\times\mathbb{R}^n$ and $D_{pp}^2H(x,p)$ is independent of $p$, there exists $\lambda(x)> 0$ such that
  \[
    v^TD_{pp}^2H(x,p)v\geq\lambda(x)|v|^2,
  \]
  for all $v\in\mathbb{R}^n$.
  Hence, ~\eqref{div=0} implies that
  \begin{equation*}
  \begin{aligned}
      0=&\int_{\mathbb{T}^n}\left(m^{(0)}(x) D^2_{pp} H(x, \nabla u_1^{(1)})\nabla_x \overline{u}(t,x)\right)\cdot\nabla_x\overline{u}(t,x) \,dx\\
        \geq& \int_{\mathbb{T}^n}m^{(0)}(x)\lambda(x)|\nabla_x\overline{u}(t,x)|^2\,dx
  \end{aligned}
  \end{equation*}
    Since $m^{(0)}$ is non-negative on $\mathbb{T}^n$, we have $m^{(0)}(x)|\nabla_x\overline{u}(t,x)|^2 = 0$ in $(0,T)\times\mathbb{T}^n$. 
    By the unique continuation principle, there is no open subset $\mathcal{O}\in\mathbb{T}^n$ such that $m^{(0)}(x)=0$ in $\mathcal{O}$. Therefore $\nabla_x \overline{u}(t,x)=0$ for all $(t,x)\in(0,T)\times\mathbb{T}^n$, then ~\eqref{eq 1-eq 2} implies that
    \begin{equation}\label{final-id}
        \frac{\delta \mathscr{F}_1}{\delta m}(x, m^{(0)})(m_2^{(1)}) = \frac{\delta \mathscr{F}_2}{\delta m}(x, m^{(0)})(m_2^{(1)}).
    \end{equation}
    
    Using the local dependence assumption, equation~\eqref{final-id} implies that 
    \begin{equation*}
        \left(\mathscr{F}_1^{(1)}(x) - \mathscr{F}_2^{(1)}(x)\right) m_2^{(1)}(x) = 0.
    \end{equation*}
    By the construction of $m_2^{(1)}(x)$, we conclude that 
    \begin{equation*}
        \mathscr{F}_1^{(1)}(x) = \mathscr{F}_2^{(1)}(x).
    \end{equation*}

    For higher-order terms ($k \geq 2$), we apply the $k$-th order mixed directional derivative operator $\partial^{k}_{s_1 s_2 \dots s_k}$ (with direction $(\mu_0^1,\mu_0^2,\cdots,\mu_0^k)$) to the Master Equation and consider the corresponding linearized system. Let $(u_j^{(i)}(t,x), m_j^{(i)}(t,x))$ for $j=1,2$ and $i=1,2,\cdots, k$ be solutions of the linearized system with initial data $m_j^{(i)}(0,x) = \mu_0^i(x)$ and running cost $\mathscr{F}_j(x,m)$. Since $\mathscr{F}_1^{(1)}(x) = \mathscr{F}_2^{(1)}(x)$, we can define   
    \begin{equation*}
    \begin{aligned}
            &m^{(i)}(t,x)\equiv m_1^{(i)}(t,x)=m_2^{(i)}(t,x), \quad i=1,2,\cdots, k,\\
            &u^{(i)}(t,x)\equiv u_1^{(i)}(t,x)=u_2^{(i)}(t,x), \quad i=1,2,\cdots, k,\\
            &\mathscr{F}^{(1)}\equiv\mathscr{F}_1^{(1)}(x) = \mathscr{F}_2^{(1)}(x).
    \end{aligned}
    \end{equation*}
    As an example, we define 
    \[
    u_j^{(1,2)}(t,x)\equiv\partial^2_{s_1s_2}U_j(t,x,m),\quad \text{ with direction } (\mu_0^1,\mu_0^2).
    \]
    It follows that $\left(u_j^{(1,2)}(t,x),m_j^{(1,2)}(t,x)\right)$ satisfies the second-order linearized system  
    \begin{equation}
        \begin{cases}
            -\partial_t u_j^{(1,2)} - \Delta u_j^{(1,2)} + H_p(x, \nabla u^{(0)}) \cdot \nabla u_j^{(1,2)} = \mathscr{F}^{(1)} m_j^{(1,2)} + \mathscr{F}_j^{(2)} m^{(1)} m^{(2)} + p_j(t,x), \\
            \partial_t m_j^{(1,2)} - \Delta m_j^{(1,2)} - \operatorname{div} \left(m_j^{(1,2)} H_p(x, \nabla u^{(0)})\right)\\
            \qquad - \operatorname{div} \left(m^{(0)} D^2_{pp} H(x, \cdot) \nabla u_j^{(1,2)}\right) = q_j(t,x), \\
            u^{(1,2)}(T,x) = 0, \\
            m^{(1,2)}(0,x) = 0,
        \end{cases}   
    \end{equation}
    where
    \begin{equation}
        \begin{aligned}
            p_j(t,x) &= H_p(x, \nabla u^{(1)}) \cdot H_p(x, \nabla u^{(2)}), \\
            q_j(t,x) &= \operatorname{div} \left(m^{(1)} D^2_{pp}H(x, \nabla u^{(2)})\cdot\nabla u^{(2)}\right) + \operatorname{div} \left(m^{(2)} D^2_{pp}H(x, \nabla u^{(1)})\cdot\nabla u^{(1)}\right).
        \end{aligned}
    \end{equation}

    Define $\tilde{u}(t,x) = u_1^{(1,2)}(t,x) - u_2^{(1,2)}(t,x)$ and $\tilde{m}(t,x) = m_1^{(1,2)}(t,x) - m_2^{(1,2)}(t,x)$. Since $p_j$ and $q_j$ are independent of $j$, we obtain 
    \begin{equation}\label{eq 1-eq 2,2}
        \begin{cases}
            -\partial_t \tilde{u} - \Delta \tilde{u} + H_p(x, \nabla u^{(0)}) \cdot \nabla \tilde{u} - \mathscr{F}^{(1)} \tilde{m} = -\left(\mathscr{F}_1^{(2)} - \mathscr{F}_2^{(2)}\right) m^{(1)} m^{(2)}, & \text{ in } (0,T)\times\mathbb{T}^n, \\
            \partial_t \tilde{m} - \Delta \tilde{m} - \operatorname{div} \left(\tilde{m} H_p(x, \nabla u^{(0)})\right) - \operatorname{div} \left(m^{(0)} D^2_{pp} H(x, \cdot) \nabla \tilde{u}\right) = 0, & \text{ in } (0,T)\times\mathbb{T}^n, \\
            \tilde{u}(T, x) = 0, & \text{ in } \mathbb{T}^n, \\
            \tilde{m}(0, x) = 0, & \text{ in } \mathbb{T}^n.
        \end{cases}
    \end{equation}
    
    By a similar argument, we conclude that $\mathscr{F}_1^{(2)}(x) = \mathscr{F}_2^{(2)}(x)$. Notice that all nonlinear terms ($p_j(x,t)$ and $q_j(x,t)$ here) in the higher-order system depend only on solutions of lower-order systems. By mathematical induction, we obtain 
    \begin{equation}
        \mathscr{F}_1^{(k)}(x) = \mathscr{F}_2^{(k)}(x), \quad \text{for all } k \in \mathbb{N}.
    \end{equation}
    
    Finally, since both systems share the same static solution, we have 
    \begin{equation}\label{eq:share the same static solution}
        \gamma_1 -\mathscr{F}_1^{(0)}(x) = \gamma_2 - \mathscr{F}_2^{(0)}(x),
    \end{equation}
where we only consider the zeroth order term of the Taylor expansion of $\mathscr{F}_j$
 for $j=1,2$. 
    
    On the other hand, choosing $m_j(0,x) = m^{(0)}(x)$ and $u_j(T,x) = u^{(0)}(x)$ for $j=1,2$, the measurement map implies that $\gamma_1 T = \gamma_2 T$. Therefore, ~\eqref{eq:share the same static solution} implies that
    \begin{equation*}
        \mathscr{F}_1^{(0)}(x) = \mathscr{F}_2^{(0)}(x).
    \end{equation*}

Since all the coefficients of the Taylor expansions of $\mathscr{F}_j$ for $j=1,2$ agree, we conclude that $\mathscr{F}_1(x,m) = \mathscr{F}_2(x,m)$.
\end{proof}
\begin{remark}
    By introducing the derivative $\frac{\delta U}{\delta m}$ of the solution for the Master Equation, we prove that the initial perturbation $\mu_0(x)$ can be chosen as an arbitrary function (Theorem~\ref{Theorem-key2}). Without this approach, the condition $\int_{\mathbb{T}^n} \mu_0(x) \, dx = 0$ would be required due to the probability measure constraint.
\end{remark}

Now consider the case where $H(x,p) = \frac{1}{2} \kappa(x) |p|^2$ for some positive function $\kappa\in C^2(\mathbb{T}^n)$. In this setting, we can also recover the Hamiltonian $H$.

\begin{theorem}\label{Main3}
    Consider two Master Equations with Hamiltonians $H_j(x,p)$ and analytic running costs $\mathscr{F}_j(x,m)$ for $j=1,2$. Suppose that 
    \begin{enumerate}
        \item $H_j(x,p) = \frac{1}{2} \kappa_j(x) |p|^2$ for some positive functions $\kappa_j\in C^2(\mathbb{T}^n)$;
        \item $\mathscr{F}_1(x, m_1^{(0)}(x)) = \mathscr{F}_2(x, m_2^{(0)}(x))$ and $\frac{\delta \mathscr{F}_j}{\delta m}(x, m_j^{(0)}(x)) = 0$ for $j=1,2$, where $m_j^{(0)}(x)$ is the static solution of the corresponding system.
    \end{enumerate}
    Then 
    \begin{equation}
        \mathcal{N}_{\mathscr{F}_1,H_1}(\mathscr{G}) = \mathcal{N}_{\mathscr{F}_2,H_2}(\mathscr{G}) \quad \text{implies that} \quad \kappa_1(x) = \kappa_2(x).
    \end{equation}
    If we further assume that $\mathscr{F}_i(x,m)$ for $i=1,2$ depend on $m$ locally and admit the expansion
    \begin{equation}
        \mathscr{F}_i(x,m) = \sum_{k=0}^{\infty} \frac{1}{k!} \mathscr{F}_i^{(k)}(x) (m - m^{(0)})^k,
    \end{equation}
    then $\mathscr{F}_1(x,m) = \mathscr{F}_2(x,m)$.
\end{theorem}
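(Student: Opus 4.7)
My plan is to adapt the progressive variational scheme of Theorem~\ref{Main2}, with the crucial additional work of first establishing that the two systems admit a shared quasi-static trajectory and then extracting the Hamiltonian coefficient $\kappa$, before invoking the previous recovery argument for $\mathscr{F}$. The assumption $\frac{\delta\mathscr{F}_j}{\delta m}(x,m_j^{(0)})=0$ is tailored to force the linearized HJB equation to be source-free; this decouples the linearized MFG system and isolates the drift $\kappa_j\nabla u_j^{(0)}$ as the sole carrier of $\kappa_j$-information to the measurement.

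First, I would establish that the two systems share a common static solution. Taking initial datum $m_0=m_1^{(0)}$ and terminal cost $\mathscr{G}(x)=u_1^{(0)}(x)$, system~1 admits the quasi-static solution $(u_1^{(0)}(x)+\gamma_1(T-t),m_1^{(0)}(x))$. The measurement identity forces the system-2 solution with these same boundary data to satisfy $u_2(0,\cdot)=u_1^{(0)}+\gamma_1 T$ and $m_2(T,\cdot)=m_1^{(0)}$. Inserting the ansatz $(u_1^{(0)}(x)+\lambda(T-t),m_1^{(0)}(x))$ into system~2 and using the compatibility condition $\mathscr{F}_1(x,m_1^{(0)})=\mathscr{F}_2(x,m_2^{(0)})$, together with an energy-based uniqueness argument for the MFG two-point boundary value problem analogous to Lemma~\ref{well_pose} and the monotonicity of $\mathscr{F},\mathscr{G}$, I would conclude this ansatz is the unique solution. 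This forces $m_1^{(0)}=m_2^{(0)}\equiv m^{(0)}$, $u_1^{(0)}=u_2^{(0)}\equiv u^{(0)}$ up to an additive constant, and $\gamma_1=\gamma_2\equiv\gamma$.

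Next, I would linearize both Master Equations around the common quasi-static configuration $(u^{(0)}+\gamma(T-t),m^{(0)})$ with an arbitrary initial perturbation $\mu_0$. The linearized pair $(v_j,\rho_j)$ satisfies $v_j(T)=0$ and $-\partial_t v_j-\Delta v_j+\kappa_j\nabla u^{(0)}\cdot\nabla v_j=\frac{\delta\mathscr{F}_j}{\delta m}(x,m^{(0)})(\rho_j)=0$, whence $v_j\equiv 0$. Consequently $\rho_j$ satisfies the pure Fokker-Planck equation $\partial_t\rho_j-\Delta\rho_j-\mathrm{div}(\rho_j\kappa_j\nabla u^{(0)})=0$ with datum $\mu_0$. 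By Theorem~\ref{Theorem-key2}, the measurement identity yields $\rho_1(T,\cdot)=\rho_2(T,\cdot)$ for every $\mu_0$; an adjoint/duality argument for linear parabolic equations, with terminal test functions chosen to span a sufficiently rich class, then identifies the drift pointwise, giving $\kappa_1(x)\nabla u^{(0)}(x)=\kappa_2(x)\nabla u^{(0)}(x)$. Wherever $\nabla u^{(0)}\neq 0$, this delivers $\kappa_1=\kappa_2$; in the degenerate case $\nabla u^{(0)}\equiv 0$ on some open set, I would pass to the second-order variation, where $\kappa_j$ reappears through the term $\mathrm{div}(m^{(0)}\kappa_j\nabla u_j^{(2)})$ in the second-order Fokker-Planck equation, and an analogous duality/unique-continuation argument applies.

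With $\kappa_1=\kappa_2$ and the static solutions coinciding, the hypotheses of Theorem~\ref{Main2} are satisfied since $D^2_{pp}H=\kappa(x)I_n$ is independent of $p$; the local analytic structure of $\mathscr{F}_j$ assumed in the second part of the statement then allows $\mathscr{F}_1=\mathscr{F}_2$ to be deduced from the progressive variational scheme of that theorem applied to successive mixed directional derivatives at $m^{(0)}$. The main obstacle will be the first step: rigorously ruling out non-quasi-static solutions of system~2 with the same boundary data as system~1's quasi-static solution requires a delicate uniqueness result for a coupled forward-backward nonlinear MFG boundary value problem, exploiting critically both the pointwise compatibility condition $\mathscr{F}_1(x,m_1^{(0)})=\mathscr{F}_2(x,m_2^{(0)})$ and the monotonicity assumptions. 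A secondary technical subtlety is handling the potential vanishing of $\nabla u^{(0)}$ when extracting $\kappa$, which may necessitate a careful bootstrap between first- and second-order variations as outlined above.
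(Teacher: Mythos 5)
Your proposal has a genuine gap in its foundational first step, and as a result the subsequent plan cannot be carried through as described.

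You propose to first establish $m_1^{(0)}=m_2^{(0)}$ and $u_1^{(0)}=u_2^{(0)}$ directly, by inserting the ansatz $(u_1^{(0)}(x)+\lambda(T-t),m_1^{(0)}(x))$ into system~2 and then invoking a uniqueness argument in the spirit of Lemma~\ref{well_pose}. The difficulty is that this ansatz is not, a priori, a solution of system~2 at all: verifying it would require
\begin{equation*}
\lambda - \Delta u_1^{(0)} + \tfrac{1}{2}\kappa_2(x)|\nabla u_1^{(0)}|^2 = \mathscr{F}_2(x,m_1^{(0)}),\qquad
-\Delta m_1^{(0)} - \mathrm{div}\bigl(m_1^{(0)}\kappa_2\nabla u_1^{(0)}\bigr)=0,
\end{equation*}
but $(u_1^{(0)},m_1^{(0)})$ was constructed from $\kappa_1$ and $\mathscr{F}_1$, and nothing in the hypotheses relates $\kappa_2,\mathscr{F}_2$ to that profile; the compatibility $\mathscr{F}_1(x,m_1^{(0)})=\mathscr{F}_2(x,m_2^{(0)})$ is a statement about $m_2^{(0)}$, not $m_1^{(0)}$. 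Energy/monotonicity uniqueness (Lemma~\ref{well_pose}) tells you a solution of system~2 is \emph{unique} once you have one; it does not tell you that a non-solution ansatz is the solution. Without already knowing $\kappa_1=\kappa_2$ (and a matching of $\mathscr{F}$), you cannot conclude the static solutions agree, which is exactly what you were trying to prove. This is a circular reasoning trap, not a technicality.

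The paper's proof avoids this by reversing the logical order. It linearizes \emph{each} system around \emph{its own} quasi-static trajectory (allowing $m_1^{(0)}\neq m_2^{(0)}$), then derives the equation for $\overline{u}=u_1^{(1)}-u_2^{(1)}$, whose source is $-(\kappa_1\nabla u_1^{(0)}-\kappa_2\nabla u_2^{(0)})\cdot\nabla u_2^{(1)}$. Using Lemma~\ref{quasi_static} (time-independence of $\nabla u_2^{(1)}$) and the time-differentiated system it shows $\overline{u}\equiv 0$, hence $\kappa_1\nabla u_1^{(0)}=\kappa_2\nabla u_2^{(0)}\equiv\vec{q}$. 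Only \emph{then} is equality of the static profiles obtained: since $\mathscr{F}_1(x,m_1^{(0)})=\mathscr{F}_2(x,m_2^{(0)})\equiv p$, both $(\lambda_j,u_j^{(0)})$ solve the \emph{same} ergodic HJB equation $\lambda-\Delta\omega+\vec{q}\cdot\nabla\omega=p$; its uniqueness gives $\lambda_1=\lambda_2$ and $\nabla u_1^{(0)}=\nabla u_2^{(0)}$, whence $\kappa_1=\kappa_2$. In short, the identity of the static solutions is a \emph{consequence} of the extracted gradient identity, not a prerequisite for it. You should restructure your proof along these lines. Two secondary remarks: (a) your later duality argument applies Lemma~\ref{D-data-key}, but that lemma assumes a conservative drift $\nabla f$, whereas $\kappa_j\nabla u^{(0)}$ is generally not a gradient, so that step would need a different identification argument; (b) your concern about $\nabla u^{(0)}$ possibly vanishing is legitimate and is indeed not addressed explicitly in the paper either, but the route to resolving it is not second-order variation of the Fokker--Planck equation alone; the paper's static-HJB identity $\kappa_1\nabla u_1^{(0)}=\kappa_2\nabla u_2^{(0)}$ combined with $\nabla u_1^{(0)}=\nabla u_2^{(0)}$ is where the vanishing-gradient degeneracy would have to be handled.
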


\begin{proof}
Let $(u_j^{(0)}, m_j^{(0)})$ be the static solution of the corresponding system for $j = 1, 2$. Following the same argument as in the proof of Theorem~\ref{Main2} and using the same notation, we find that $\overline{u}(t,x)$ satisfies 
\begin{equation}\label{eq 1-eq 2-H}
\begin{cases}
  -\partial_t \overline{u} - \Delta \overline{u} + \kappa_1(x) \nabla u_1^{(0)} \cdot \nabla \overline{u} 
  = -\left(\kappa_1(x) \nabla u_1^{(0)} - \kappa_2(x) \nabla u_2^{(0)}\right) \cdot \nabla u_2^{(1)}, \\
  \overline{u}(T,x) = 0.
\end{cases}
\end{equation}

Define $v(t,x) = \partial_t \overline{u}(t,x)$ and $\rho(t,x) = \partial_t \overline{m}(t,x)$. Since $\mathcal{N}_{\mathscr{F}_1,H_1}(\mathscr{G}) = \mathcal{N}_{\mathscr{F}_2,H_2}(\mathscr{G})$, an argument similar to the previous proof shows that $v$ satisfies 
\begin{equation}\label{u_t,v_t-2}
\begin{cases}
  -\partial_t v - \Delta v + \kappa_1(x) \nabla u_1^{(0)} \cdot \nabla v = 0, \\
  v(T,x) = 0.
\end{cases}
\end{equation}
It follows that $v(t,x) = 0$, and hence $\overline{u}(t,x)$ is independent of time. Since $\overline{u}(T,x) = 0$, we conclude that $\overline{u}(t,x) = 0$ for all $t \in [0,T]$.

Therefore, equation~\eqref{eq 1-eq 2-H} implies that 
\begin{equation}
  \kappa_1(x) \nabla u_1^{(0)} = \kappa_2(x) \nabla u_2^{(0)}.
\end{equation}

Now considering the static equations~\eqref{static}, we have, for $j=1,2$, 
\begin{equation}
  \lambda_j - \Delta u_j^{(0)} + \frac{1}{2} \kappa_j(x) |\nabla u_j^{(0)}|^2 = \mathscr{F}_j(x, m_j^{(0)}).
\end{equation}
Let $\frac{1}{2}\kappa_1(x) \nabla u_1^{(0)} = \frac{1}{2}\kappa_2(x) \nabla u_2^{(0)} = \vec{q}(x)$ and $\mathscr{F}_1(x, m_1^{(0)}) = \mathscr{F}_2(x, m_2^{(0)}) = p(x)$. Then $(\lambda_j, u_j^{(0)})$ for $j=1,2$ are solutions to the following equation of $(\lambda,\omega)$, 
\begin{equation}\label{eq:omega}
  \lambda - \Delta \omega(x) + \vec{q}(x) \cdot \nabla \omega(x) = p(x), \quad \text{in } \mathbb{T}^n.
\end{equation}
The solution $(\lambda, \omega)$ of equation~\eqref{eq:omega} is unique up to an additive constant for $\omega$. Therefore, we obtain 
\begin{equation*}
  \lambda_1 = \lambda_2 \quad \text{and} \quad \nabla u_1^{(0)} = \nabla u_2^{(0)}.
\end{equation*}
Hence, $\kappa_1(x) = \kappa_2(x)$.

Applying Theorem~\ref{Main2}, whose conditions are now fulfilled, we conclude that $\mathscr{F}_1(x,m) = \mathscr{F}_2(x,m)$.
\end{proof}

\section{Inverse Boundary Problems for the Master Equation}
\label{bvpmaster}

We consider the Master Equation defined on a bounded domain in $\mathbb{R}^n$. Let $\Omega \subset \mathbb{R}^n$ be a bounded domain with smooth boundary $\partial\Omega = \Sigma$, and let $\nu(\cdot)$ denote the outward unit normal vector to $\Sigma$. The Master Equation takes the following form, 
\begin{equation}\label{master_equation_bounded}
    \begin{cases}
        \begin{aligned}
        &-\partial_t U(t,x,m) - \Delta_x U(t,x,m) + H(x, D_x U(t,x,m)) \\
        &\quad - \int_{\Omega} \nabla_y \cdot \big[D_m U\big](t,x,m,y) \, dm(y) \\
        &\quad + \int_{\Omega} D_m U(t,x,m,y) \cdot H\big(y, D_x U(t,y,m)\big) \, dm(y) = \mathscr{F}(x,m), \\
        &\qquad \text{in } (0,T) \times \Omega \times \mathcal{P}(\Omega), \\[1ex]
        
        &D_x U(t,x,m) \cdot \nu(x) = 0, \\
        &\qquad \text{on } (0,T) \times \Sigma \times \mathcal{P}(\Omega), \\[1ex]
        
        &D_m U(t,x,m,y) \cdot \nu(y) = 0, \\
        &\qquad \text{on } (0,T) \times \Omega \times \mathcal{P}(\Omega) \times \Sigma, \\[1ex]
        
        &U(T,x,m) = \mathscr{G}(x,m), \\
        &\qquad \text{in } \Omega \times \mathcal{P}(\Omega).
        \end{aligned}
    \end{cases}
\end{equation}

The well-posedness of this equation has been established in \cite{linear_in_Om}. In what follows, we present the key results that will be used later. While these results are analogous to the periodic case, careful attention must be paid to the boundary conditions. 
In this section, we impose the following assumptions.  
\begin{enumerate}
    \item \textbf{Neumann boundary conditions:}
    \begin{equation}
        \begin{aligned}
            &\left\langle \frac{\delta \mathscr{F}}{\delta m}(x,m,y), \nu(y) \right\rangle \Big|_{\Sigma} = 0, \\
            &\left\langle \frac{\delta \mathscr{G}}{\delta m}(x,m,y), \nu(y) \right\rangle \Big|_{\Sigma} = 0, \\
            &\left\langle D_x \mathscr{G}(x,m), \nu(x) \right\rangle \Big|_{\Sigma} = 0,
        \end{aligned}
    \end{equation}
    for all $m \in \mathcal{P}(\Omega)$.
    
    \item \textbf{Hamiltonian regularity:} The Hamiltonian $H: \mathbb{T}^n \times \mathbb{R}^n \to \mathbb{R}$ is smooth and globally Lipschitz continuous, and satisfies the uniform ellipticity condition, 
    \begin{equation}
        0 < D^2_{pp} H(x,p) \leq C I_n, \quad \text{for all } (x,p) \in \mathbb{T}^n \times \mathbb{R}^n,
    \end{equation}
    where $C > 0$ is a constant and $I_n$ denotes the $n \times n$ identity matrix.
    
    \item \textbf{Monotonicity conditions:} The operators $\mathscr{F}, \mathscr{G}: \mathbb{T}^n \times \mathcal{P}(\Omega) \to \mathbb{R}$ are globally Lipschitz continuous and satisfy the following monotonicity conditions for any $m, m' \in \mathcal{P}(\Omega)$, 
    \begin{equation}
        \begin{aligned}
            &\int_{\Omega} \left(\mathscr{F}(x,m) - \mathscr{F}(x,m')\right) d(m - m')(x) \geq 0, \\
            &\int_{\Omega} \left(\mathscr{G}(x,m) - \mathscr{G}(x,m')\right) d(m - m')(x) \geq 0.
        \end{aligned}
    \end{equation}
\end{enumerate}
Additionally, we assume that all regularity conditions mentioned in Section~\ref{IPMFGMaster} continue to hold in this setting. Again,  we present the key results from  \cite{linear_in_Om}, reformulated for our application.
\begin{theorem}[Theorem 2.5 in \cite{linear_in_Om}]\label{well-pose-bounded}
    Under all conditions listed above, the Master Equation \eqref{master_equation_bounded} admits a unique classical solution.
\end{theorem}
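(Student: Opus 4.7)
The plan is to adapt the strategy used in the periodic setting (Theorem~2.8 of \cite{Linear_in_Tn}) to the bounded-domain case, treating the Neumann boundary data as the principal new ingredient. I would first consider the associated MFG system on $\Omega$,
\begin{equation*}
\begin{cases}
-\partial_t u - \Delta u + H(x,\nabla u) = \mathscr{F}(x,m(t)), & \text{in } (t_0,T)\times\Omega,\\
\partial_t m - \Delta m - \mathrm{div}(m H_p(x,\nabla u)) = 0, & \text{in } (t_0,T)\times\Omega,\\
\nabla u\cdot\nu = 0,\ (\nabla m + m H_p(x,\nabla u))\cdot\nu = 0, & \text{on } (t_0,T)\times\Sigma,\\
u(T,x) = \mathscr{G}(x,m(T)),\ m(t_0,x) = m_0(x), & \text{in } \Omega,
\end{cases}
\end{equation*}
and show, via a Schauder fixed-point argument on the map $m\mapsto (\text{HJB solution})\mapsto(\text{FP solution})$, that it admits a classical solution. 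Parabolic Schauder estimates on $\Omega$ with Neumann data (which are available since $\Sigma$ is smooth) provide the necessary $C^{1+\alpha/2,2+\alpha}$ bounds uniformly in $m$, and the compatibility $\nabla u\cdot\nu=\nabla\mathscr{G}\cdot\nu|_\Sigma=0$ follows from the third hypothesis in the Neumann assumption.

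Next, I would prove uniqueness of $(u,m)$ by the Lasry--Lions monotonicity trick: subtracting two solution pairs, integrating by parts over $(t_0,T)\times\Omega$, and using the Neumann conditions to kill all boundary terms; the monotonicity of $\mathscr{F}$ and $\mathscr{G}$ together with the strict convexity $D^2_{pp}H>0$ then forces the difference to vanish. Having uniqueness, I would \emph{define} $U(t_0,x,m_0)\equiv u(t_0,x;m_0)$ and verify, by reproducing the chain-rule computation from the proof of Theorem~\ref{one-to-one}, that $U$ satisfies \eqref{master_equation_bounded} pointwise; here the Neumann assumptions on $\frac{\delta\mathscr{F}}{\delta m}$ and $\frac{\delta\mathscr{G}}{\delta m}$ are precisely what is needed so that the boundary-flux integrals arising when differentiating $U$ in $m$ along the Fokker--Planck flow vanish.

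The remaining task is to show that $U$ is genuinely $C^1$ in $m$ (and $C^{1,2}$ in $(t,x)$), which is where the bulk of the technical work lies. Following Subsection~\ref{differentiability of U}, I would linearize the MFG system around a fixed trajectory $(u,m)$ and establish well-posedness of the coupled forward-backward system \eqref{linearized system} now posed on $\Omega$ with homogeneous Neumann data on $\Sigma$; the analogues of Lemma~\ref{FP-eq-wp} and Lemma~\ref{fun-solution} go through, with the Neumann heat semigroup replacing the torus heat semigroup, to yield a kernel $S(t,x,m,y)$ such that $v(t_0,x)=\int_\Omega S(t_0,x,m_0,y)\mu_0(y)\,dy$. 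Identifying $\frac{\delta U}{\delta m}=S$ (up to the normalization \eqref{norm_tobe_0}) via the Taylor-type remainder estimate then gives the required regularity and completes the proof.

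The main obstacle is handling the coupling between the HJB Neumann condition and the FP no-flux condition throughout the linearization: when one differentiates the FP equation in $m$, the boundary term $(\mu H_p+m H_{pp}\nabla v)\cdot\nu$ must be shown to vanish on $\Sigma$, which relies on $\nabla u\cdot\nu=0$ propagating to $\nabla v\cdot\nu=0$ for the linearized value function. This propagation is not automatic and must be extracted either from the Neumann compatibility of $\frac{\delta\mathscr{F}}{\delta m},\frac{\delta\mathscr{G}}{\delta m}$ on $\Sigma$ together with uniqueness for the linearized HJB, or by passing to a weak/duality formulation as in the proof of Lemma~\ref{FP-eq-wp}. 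Once this boundary compatibility is established, the rest of the argument is parallel to the periodic case and I would simply cite \cite{linear_in_Om} for the detailed constants and estimates.
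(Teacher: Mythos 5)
This statement is not proved in the paper at all: it is imported verbatim as Theorem~2.5 of \cite{linear_in_Om}, so there is no internal argument to compare your proposal against. Your sketch follows essentially the same route as that reference (and as the torus case in \cite{Linear_in_Tn}): solve the Neumann MFG system by a fixed-point argument with parabolic Schauder estimates, get uniqueness from Lasry--Lions monotonicity plus $D^2_{pp}H>0$, define $U(t_0,x,m_0)=u(t_0,x;m_0)$, and obtain $C^1$-dependence on $m$ through the linearized forward--backward system with a kernel representation of $\frac{\delta U}{\delta m}$. The one genuinely delicate point you flag --- that the no-flux condition must propagate through the linearization, which is exactly what the hypotheses $\langle\frac{\delta\mathscr{F}}{\delta m}(x,m,y),\nu(y)\rangle|_{\Sigma}=\langle\frac{\delta\mathscr{G}}{\delta m}(x,m,y),\nu(y)\rangle|_{\Sigma}=\langle D_x\mathscr{G},\nu\rangle|_{\Sigma}=0$ are designed to ensure --- is indeed the main new difficulty treated in \cite{linear_in_Om}. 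As a plan it is sound, but it remains a plan: the quantitative estimates (uniform-in-$m$ Schauder bounds, well-posedness of the linearized system on $\Omega$, the Taylor-remainder identification of the kernel with $\frac{\delta U}{\delta m}$) are precisely the content of the cited work, which both you and the paper ultimately defer to.
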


The corresponding mean field game (MFG) system with Neumann boundary conditions is given by 
\begin{equation}\label{mfg-bounded}
    \begin{cases}
        -\partial_t u(t,x) - \Delta u(t,x) + H(x, \nabla u) = \mathscr{F}(x, m(t,x)), & \text{in } (t_0,T)\times\Omega, \\
        \partial_t m(t,x) - \Delta m(t,x) - \operatorname{div}\left(m(t,x) H_p(x, \nabla u)\right) = 0, & \text{in } (t_0,T)\times\Omega, \\
        \partial_\nu u(t,x) = 0, \quad \left[\nabla m(t,x) + m H_p(x, \nabla u)\right] \cdot \nu(x) = 0, & \text{on } (0,T) \times \Sigma, \\
        u(T,x) = \mathscr{G}(x, m_T), & \text{in } \Omega, \\
        m(t_0, x) = m_0(x), & \text{in } \Omega.
    \end{cases}  		
\end{equation}

Given a solution $(u,m)$ of the system \eqref{mfg-bounded}, we define a function $U$ by $U(t_0,x,m_0) \equiv u(t_0,x)$. Then $U(t,x,m)$ solves the Master Equation.

We also need to study the linearized system of the Master Equation. As established in \cite{linear_in_Om}, we have the following result. 

\begin{theorem}[Section 5 in \cite{linear_in_Om}]\label{key-domain}
    Let $(u,m)$ be a solution of the system \eqref{mfg-bounded}. Consider the linearized system, 
    \begin{equation}\label{linearized system 2}
        \begin{cases}
            -\partial_t v - \Delta v + H_p(x, \nabla u) \cdot \nabla v = \dfrac{\delta \mathscr{F}}{\delta m}(x, m(t))(\rho(t)), & \text{in } (t_0,T) \times \Omega, \\
            \partial_t \rho - \Delta \rho - \nabla \cdot \left(m D^2_{pp} H(x, \nabla u) \nabla v\right) - \nabla \cdot \left(\rho H_p(x, \nabla u)\right) = 0, & \text{in } (t_0,T) \times \Omega, \\
            \partial_\nu v(t,x) = 0, \quad \left[\nabla \rho + \rho H_p(x, \nabla u)\right] \cdot \nu(x) = 0, & \text{on } (t_0,T) \times \Sigma, \\
            v(T,x) = \dfrac{\delta \mathscr{G}}{\delta m}(x, m(T))(\rho(T)), & \text{in } \Omega, \\
            \rho(t_0,x) = \mu_0(x), & \text{in } \Omega.
        \end{cases}
    \end{equation}

    Let $\mu_0 \in C^{\infty}(\Omega)$. If all conditions in Theorem~\ref{well-pose-bounded} are satisfied, then the linearized system has a solution and
    \begin{equation}
        v(t_0,x) = \int_{\Omega} \frac{\delta U}{\delta m}(t_0,x,m_0,y) \mu_0(y) \, dy,
    \end{equation}
    where $U$ is the solution of the Master Equation \eqref{master_equation_bounded} with the initial condition $m(t_0,x) = m_0(x)$.
\end{theorem}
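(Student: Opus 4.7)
The plan is to transplant the proof used on $\mathbb{T}^n$ (cf.\ Theorem~\ref{Theorem-key2} and Lemma~\ref{well_pose}) to the bounded–domain setting, with the Neumann boundary conditions playing a central role in every integration by parts. The argument splits into two tasks: (i) solvability of the linearized system~\eqref{linearized system 2}, and (ii) the integral representation $v(t_0,x)=\int_\Omega \frac{\delta U}{\delta m}(t_0,x,m_0,y)\mu_0(y)\,dy$.

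For (i), I would run the duality/energy computation that already proved uniqueness on $\mathbb{T}^n$: multiply the backward HJB–linearization by $\rho$, the forward FP–linearization by $v$, add and integrate over $Q=(t_0,T)\times\Omega$. The boundary integrals produced by the two Laplacians assemble into $\int_\Sigma(v\,\partial_\nu\rho-\rho\,\partial_\nu v)$ and $\int_\Sigma \rho v\, H_p(x,\nabla u)\cdot\nu$, all of which vanish because of the prescribed conditions $\partial_\nu v=0$ and $[\nabla\rho+\rho H_p(x,\nabla u)]\cdot\nu=0$. Combined with the monotonicity of $\frac{\delta\mathscr F}{\delta m}$, the monotonicity of $\frac{\delta\mathscr G}{\delta m}$ at the terminal time, and the positivity of $D^2_{pp}H$, this yields uniqueness. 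For existence I would decouple the system and iterate: freeze $\rho^{(n)}$, solve the backward linear parabolic equation with Neumann data for $v^{(n+1)}$, feed the result into the forward Fokker–Planck–type equation with flux boundary condition to produce $\rho^{(n+1)}$, and close a Schauder fixed point in $C^{\frac{1+\alpha}{2},1+\alpha}(\overline Q)\times C^{-(1+\alpha)}(\Omega)$ using the regularity bounds on $u,m$ furnished by Theorem~\ref{well-pose-bounded}.

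For (ii), I would linearize the nonlinear MFG at $(u,m)$. Let $\mu_0\in C^\infty(\Omega)$ be admissible (so that $m_0^s\equiv m_0+s\mu_0$ lies in $\mathcal P(\Omega)$ for $s$ small), and let $(u^s,m^s)$ solve~\eqref{mfg-bounded} with initial datum $m_0^s$. Set $v^s\equiv (u^s-u)/s$ and $\rho^s\equiv(m^s-m)/s$. A Taylor expansion of $H$ and $\mathscr F,\mathscr G$, together with the uniform $C^{2+\alpha}$–estimates of the MFG solutions and the measure-Lipschitz bounds on $\delta\mathscr F/\delta m,\delta\mathscr G/\delta m$, shows $(v^s,\rho^s)\to(v,\rho)$ in an appropriate norm, where $(v,\rho)$ solves exactly~\eqref{linearized system 2} with $\rho(t_0,\cdot)=\mu_0$. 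Since $U(t_0,x,m_0^s)=u^s(t_0,x)$ by the correspondence between the Master Equation and the MFG, we obtain
\begin{equation*}
U(t_0,x,m_0^s)-U(t_0,x,m_0)=s\,v(t_0,x)+o(s),
\end{equation*}
and the definition~\eqref{derivation} of $\delta U/\delta m$, together with the normalization~\eqref{norm_tobe_0}, identifies $v(t_0,x)$ with $\int_\Omega \frac{\delta U}{\delta m}(t_0,x,m_0,y)\mu_0(y)\,dy$.

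The main obstacle is the convergence of the difference quotients $(v^s,\rho^s)\to(v,\rho)$ uniformly up to the boundary $\Sigma$. The nonlinear Neumann flux $[\nabla m^s+m^s H_p(x,\nabla u^s)]\cdot\nu=0$ must be expanded to first order in $s$ so that the limit flux condition $[\nabla\rho+\rho H_p(x,\nabla u)]\cdot\nu=0$ is recovered in the trace sense, and simultaneously the nonlinear term $H(x,\nabla u^s)-H(x,\nabla u)-H_p(x,\nabla u)\cdot\nabla(u^s-u)$ must be controlled in a parabolic Hölder norm by $O(s)$. Both estimates rest on the full $C^{2+\alpha}$–regularity of the MFG solutions up to $\Sigma$, which is precisely where the compatibility of the Neumann data with the regularity assumptions on $\mathscr F,\mathscr G$ (and the smoothness of $\partial\Omega$) becomes indispensable.
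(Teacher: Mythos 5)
A preliminary remark: the paper itself does not prove this statement — it is quoted from Section 5 of \cite{linear_in_Om} — so the only meaningful comparison is with the standard argument there and with the paper's own analogous treatment of the DPE case (Theorem~\ref{diff_wrt_m}, Lemma~\ref{fun-solution}). Your overall strategy (duality/energy identity plus a fixed point for the well-posedness of \eqref{linearized system 2}, then difference quotients of the MFG system \eqref{mfg-bounded} to identify $v(t_0,\cdot)$ with the Lions derivative of $U$) is indeed that route.

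There is, however, a genuine gap in your step (ii). You only treat ``admissible'' perturbations, i.e.\ $\mu_0$ with $\int_\Omega \mu_0\,dx=0$ and a sign/size restriction so that $m_0+s\mu_0\in\mathcal P(\Omega)$; for such $\mu_0$ the difference quotient $(u^s-u)/s$ makes sense. But the theorem asserts the representation for \emph{every} $\mu_0\in C^\infty(\Omega)$, and this extra generality is precisely what the paper exploits (see the remark after Theorem~\ref{Main2}: the point of passing through $\delta U/\delta m$ is to remove the constraint $\int\mu_0\,dx=0$). For non-mean-zero $\mu_0$ the measure $m_0+s\mu_0$ is not a probability measure and $u^s$ is undefined, so your argument simply does not reach the stated conclusion. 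The missing step is the linearity/normalization argument the paper uses in the DPE case: the solution map $\mu_0\mapsto(v,\rho)$ of \eqref{linearized system 2} is linear; for $\mu_0=m_0$ its solution is $(v,\rho)=(0,m)$, because the source $\frac{\delta \mathscr F}{\delta m}(x,m(t))(m(t))$ and the terminal datum $\frac{\delta \mathscr G}{\delta m}(x,m(T))(m(T))$ vanish by the normalization \eqref{norm_tobe_0}, which matches the vanishing of $\int_\Omega\frac{\delta U}{\delta m}(t_0,x,m_0,y)m_0(y)\,dy$; hence decomposing $\mu_0=\bigl(\mu_0-(\int_\Omega\mu_0)\,m_0\bigr)+(\int_\Omega\mu_0)\,m_0$, with $m_0$ smooth and bounded below, reduces the general case to the mean-zero case (with a further density/stability argument if $m_0$ is not bounded below). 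Without this step the theorem as stated is not proved.

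Two smaller cautions. First, linearizing the nonlinear flux condition $[\nabla m^s+m^s H_p(x,\nabla u^s)]\cdot\nu=0$ to first order in $s$ yields $[\nabla\rho+\rho H_p(x,\nabla u)+m\,D^2_{pp}H(x,\nabla u)\nabla v]\cdot\nu=0$, not the truncated condition you state as the limit; the extra term only drops when $D^2_{pp}H(x,\nabla u)\nu$ is parallel to $\nu$ (e.g.\ $H=\tfrac12\kappa(x)|p|^2$, as used later in Theorem~\ref{domain-main}). Second, and consistently, in your energy identity the integration by parts of $v\,\nabla\cdot\bigl(m D^2_{pp}H(x,\nabla u)\nabla v\bigr)$ produces the boundary term $\int_\Sigma v\,m\,D^2_{pp}H(x,\nabla u)\nabla v\cdot\nu\,d\sigma$, which is not killed by $\partial_\nu v=0$ alone; with the full linearized flux condition all boundary contributions do cancel. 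You should either carry the extra flux term or invoke the structural assumption on $H$ that makes it vanish.
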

In fact, the results established in Section~\ref{IPMFGMaster} extend to the bounded domain $\Omega$ via analogous arguments.
\begin{corollary}
    Under the assumptions for the bounded domain $\Omega$, Theorems~\ref{Main2} and~\ref{Main3} remain valid for Master Equations defined on $\mathcal{P}(\Omega)$.
\end{corollary}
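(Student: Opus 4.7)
The plan is to adapt each ingredient of the proofs of Theorems \ref{Main2} and \ref{Main3} from $\mathbb{T}^n$ to the bounded domain $\Omega$, retaining the same progressive variational scheme combined with static solution analysis. Theorem \ref{key-domain} will play the role of Theorem \ref{Theorem-key2} as the bridge linking $\frac{\delta U}{\delta m}$ to the solution of the linearized MFG system, while the Neumann boundary conditions imposed on $\mathscr{F}$, $\mathscr{G}$, and their functional derivatives ensure that the analytic machinery developed for the periodic case transfers intact.

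First, I would establish the analogues of Lemma \ref{well_pose} (uniqueness for the linearized system around a static solution) and Lemma \ref{quasi_static} (existence of quasi-static solutions with non-vanishing $m$) on $\Omega$. For uniqueness, the energy argument of Lemma \ref{well_pose} proceeds identically: multiply the first equation by $m$ and the second by $u$, then integrate over $(0,T) \times \Omega$. The Neumann boundary conditions $\partial_\nu u = 0$ and $[\nabla m + m H_p(x,\nabla u^{(0)})]\cdot \nu = 0$ built into \eqref{linearized system 2} make every boundary term arising from integration by parts vanish, yielding the same energy identity
\begin{equation*}
\int_0^T \! \int_{\Omega} \left[\tfrac{\delta \mathscr{F}}{\delta m}(x, m^{(0)})(m)\, m + m^{(0)}(x)\, D^2_{pp} H(x, \nabla u^{(0)}) |\nabla u|^2\right] dx\, dt = 0,
\end{equation*}
from which monotonicity of $\mathscr{F}$ and strict positivity of $D^2_{pp} H$ force $u = m = 0$. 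For quasi-static existence, the same fixed-point construction applies to the stationary elliptic Neumann system, and the unique continuation principle for elliptic operators still precludes $\rho$ from vanishing on any open subset.

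With these auxiliary results in place, the remainder of each proof transfers \emph{mutatis mutandis}. Given $\mathcal{N}_{\mathscr{F}_1, H}(\mathscr{G}) = \mathcal{N}_{\mathscr{F}_2, H}(\mathscr{G})$, Theorem \ref{key-domain} yields $\overline{u}(0,x) = 0$, while the standard linearization argument gives $\overline{m}(T,x) = 0$. Setting $v = \partial_t \overline{u}$ and $\rho = \partial_t \overline{m}$ produces a homogeneous system of the type treated by the $\Omega$-analogue of Lemma \ref{well_pose}, forcing $\overline{u}$ and $\overline{m}$ to be time-independent and hence identically zero. The divergence identity \eqref{div=0}, combined with the unique continuation principle on $\Omega$ and the non-vanishing property from the $\Omega$-analogue of Lemma \ref{quasi_static}, then gives $\nabla \overline{u} = 0$ and in turn the identification $\frac{\delta \mathscr{F}_1}{\delta m}(x, m^{(0)}) = \frac{\delta \mathscr{F}_2}{\delta m}(x, m^{(0)})$. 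Higher-order coefficients are recovered by the induction on $k$-th order mixed directional derivatives as in the proof of Theorem \ref{Main2}, the zeroth-order step is unchanged, and for Theorem \ref{Main3} the Hamiltonian reconstruction reduces to solving the scalar elliptic equation of type \eqref{eq:omega} on $\Omega$ with Neumann data, whose solution is unique up to an additive constant for the unknown potential.

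The main technical obstacle is the careful propagation of the Neumann boundary conditions through the hierarchy of linearized systems: one must verify at each order $k$ that $(u_j^{(k)}, m_j^{(k)})$ inherits compatible Neumann data, so that every integration by parts invoked in the energy and identification arguments continues to produce no nontrivial boundary contributions. This is guaranteed by the Neumann conditions assumed at the start of Section \ref{bvpmaster} on $\frac{\delta^k \mathscr{F}}{\delta m^k}$ and $\frac{\delta^k \mathscr{G}}{\delta m^k}$, but the verification must be performed attentively at each induction step, which is the sole conceptual departure from the torus case.
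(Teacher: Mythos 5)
Your proposal is correct and follows exactly the route the paper itself indicates: the paper simply asserts that the Section~5 results ``extend to the bounded domain $\Omega$ via analogous arguments'' and gives no further detail, so your outline is precisely the fleshed-out version of that claim, with Theorem~\ref{key-domain} replacing Theorem~\ref{Theorem-key2} and the Neumann conditions absorbing all boundary contributions in the energy identities and unique-continuation steps. One small caveat worth flagging: the Neumann hypotheses stated at the start of Section~\ref{bvpmaster} are formulated only for the first-order derivatives $\frac{\delta\mathscr{F}}{\delta m}$, $\frac{\delta\mathscr{G}}{\delta m}$ (and $D_x\mathscr{G}$), so the compatible Neumann data for $\frac{\delta^k\mathscr{F}}{\delta m^k}$, $\frac{\delta^k\mathscr{G}}{\delta m^k}$ with $k\ge 2$ that you invoke at each induction step is an implicit, natural additional assumption rather than something literally contained in the paper's stated hypotheses --- it would be cleaner to state it explicitly alongside the higher-order regularity conditions.
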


\subsection{Inverse boundary problems for the Master Equation}

In the previous section, we used time boundary data for recovery purposes. We now use spatial boundary data to identify the running cost. Define the measurement map by 
\begin{equation}
    \mathcal{L}_{\mathscr{F}}(\mathscr{G}) \equiv U(t,x,m)\big|_{\Sigma}.
\end{equation}
We now state the main result of this section.

\begin{theorem}\label{domain-main}
    Consider two Master Equations defined on $\Omega$ with running costs $\mathscr{F}_j(x,m)$ for $j=1,2$. Assume that 
    \begin{itemize}
        \item[(1)] The Hamiltonian is given by $H(x,p) = \frac{1}{2}|p|^2$;
        \item[(2)] The linearized running costs vanish at the static solutions, i.e.,
              \begin{equation}
                  \frac{\delta \mathscr{F}_j}{\delta m}(x, m_j^{(0)}(x)) = 0 \quad \text{for } j=1,2,
              \end{equation}
              where $m_j^{(0)}$ is a static solution of the corresponding MFG system;
        \item[(3)] The measurement maps coincide: $\mathcal{L}_{\mathscr{F}_1}(\mathscr{G}) = \mathcal{L}_{\mathscr{F}_2}(\mathscr{G})$ for all admissible $\mathscr{G}$.
    \end{itemize}
    Then
\begin{equation*}
    (\nabla u_1^{(0)}, m_1^{(0)}) = (\nabla u_2^{(0)}, m_2^{(0)}), 
\end{equation*}
where $(u_j^{(0)}, m_j^{(0)})$ ($j=1,2$) are static solutions for the corresponding MFG system associated with \eqref{mfg-bounded}. 

If we further assume that $\mathscr{F}_i(x,m)$ for $i=1,2$ depend on $m$ locally and admit the power series expansion
        \begin{equation}
            \mathscr{F}_i(x,m) = \sum_{k=2}^{\infty} \frac{1}{k!} \mathscr{F}_i^{(k)}(x) (m - m_i^{(0)})^k,
        \end{equation}
then       
    \begin{equation}
        \mathscr{F}_1(x,m) = \mathscr{F}_2(x,m) \quad \text{in } \Omega \times \mathcal{P}(\Omega).
    \end{equation}
\end{theorem}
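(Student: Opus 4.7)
The proof naturally separates into two phases mirroring the strategies of Theorems~\ref{Main3} and~\ref{Main2}.

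\textbf{Phase 1 (identification of the static solutions).} The key structural observation is that, since $H(x,p)=\tfrac12|p|^2$, the Neumann boundary condition on the Fokker--Planck component of the static MFG system forces the explicit relation $m_j^{(0)}(x)=C_j e^{-u_j^{(0)}(x)}$, with $C_j$ fixed by $\int_\Omega m_j^{(0)}=1$; hence Phase~1 reduces to showing $\nabla u_1^{(0)}=\nabla u_2^{(0)}$. To this end, I apply the progressive variational scheme around the quasi-static $(u_j^{(0)}+\gamma_j(T-t),\,m_j^{(0)})$ of each system with a common initial perturbation $\mu_0\in C^\infty(\Omega)$, $\int \mu_0=0$. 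Under assumption (2), the first-order linearization \eqref{linearized system 2} decouples in the value-function component:
\[
-\partial_t v_j-\Delta v_j+\nabla u_j^{(0)}\cdot\nabla v_j=0,\quad \partial_\nu v_j|_\Sigma=0,\quad v_j(T,x)=\tfrac{\delta \mathscr{G}}{\delta m}(x,m_j^{(0)})(\rho_j(T)).
\]
The hypothesis $\mathcal{L}_{\mathscr{F}_1}(\mathscr{G})=\mathcal{L}_{\mathscr{F}_2}(\mathscr{G})$ yields $\tfrac{\delta U_1}{\delta m}(t,x,m,y)|_\Sigma=\tfrac{\delta U_2}{\delta m}(t,x,m,y)|_\Sigma$ for every $m$. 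Via Theorem~\ref{key-domain} and the $m^{(0)}=Ce^{-u^{(0)}}$ identification, I then show that the difference $\bar v=v_1-v_2$ carries zero Cauchy data on $\Sigma$ and satisfies
\[
-\partial_t\bar v-\Delta\bar v+\nabla u_1^{(0)}\cdot\nabla \bar v=-(\nabla u_1^{(0)}-\nabla u_2^{(0)})\cdot\nabla v_2.
\]
A parabolic unique-continuation/Carleman argument then yields $\bar v\equiv 0$ in a neighborhood of $\Sigma$. By varying $\mathscr{G}$ and $\mu_0$, the gradients $\nabla v_2$ sweep out a rich enough family to force the source to vanish pointwise, so $\nabla u_1^{(0)}=\nabla u_2^{(0)}$ near $\Sigma$; elliptic unique continuation for the static HJB equation then propagates the equality to all of $\Omega$, which in turn gives $m_1^{(0)}=m_2^{(0)}$.

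\textbf{Phase 2 (identification of $\mathscr{F}$).} With the common static $(u^{(0)},m^{(0)})$ in hand, I apply the progressive variational scheme at higher orders in direct analogy with Theorem~\ref{Main2}. Assumption~(2) combined with the expansion $\mathscr{F}_j(x,m)=\sum_{k\geq 2}\tfrac1{k!}\mathscr{F}_j^{(k)}(x)(m-m^{(0)})^k$ means the order-one coefficient automatically vanishes. For each $k\geq 2$, the difference $\bar U^{(k)}=U_1^{(k)}-U_2^{(k)}$ of $k$-th mixed directional derivatives satisfies a linear backward parabolic system whose source is $(\mathscr{F}_1^{(k)}(x)-\mathscr{F}_2^{(k)}(x))(m^{(1)})^k$ up to terms that vanish by induction, together with zero Cauchy data $\bar U^{(k)}|_\Sigma=\partial_\nu\bar U^{(k)}|_\Sigma=0$. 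Parabolic unique continuation forces $\bar U^{(k)}\equiv 0$, and selecting $\mu_0$ so that $m^{(1)}$ does not vanish on any open set (cf.\ Lemma~\ref{quasi_static}) gives $\mathscr{F}_1^{(k)}=\mathscr{F}_2^{(k)}$ in $\Omega$. Summing the recovered Taylor expansion concludes $\mathscr{F}_1=\mathscr{F}_2$ on $\Omega\times\mathcal{P}(\Omega)$.

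\textbf{Main obstacle.} The technical heart of the argument is Phase 1: the two linearizations are a priori based at distinct quasi-statics $(u_j^{(0)}+\gamma_j(T-t),m_j^{(0)})$, so producing a genuine linear comparison with zero lateral Cauchy data simultaneously requires both the alignment afforded by $m_j^{(0)}=C_j e^{-u_j^{(0)}}$ and a parabolic unique-continuation principle for variable-coefficient backward parabolic equations whose Cauchy data lives only on $(0,T)\times\Sigma$. A secondary delicate point is establishing that the admissible family of perturbations $(\mathscr{G},\mu_0)$ generates enough gradients $\nabla v_2$ to isolate $\nabla u_1^{(0)}=\nabla u_2^{(0)}$ pointwise.
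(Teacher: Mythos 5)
Your Phase 1 contains the decisive gap. After forming $\bar v=v_1-v_2$, you invoke ``a parabolic unique-continuation/Carleman argument'' to conclude $\bar v\equiv 0$ near $\Sigma$, but the equation you wrote for $\bar v$ has the unknown, uncontrolled source $-(\nabla u_1^{(0)}-\nabla u_2^{(0)})\cdot\nabla v_2$ on the right-hand side; unique continuation from zero lateral Cauchy data only applies to the homogeneous equation (or to sources dominated by the solution itself), so this step does not go through — it is circular, since the source is exactly the quantity you are trying to kill. The follow-up claim that varying $(\mathscr{G},\mu_0)$ makes the gradients $\nabla v_2$ ``sweep out a rich enough family'' is asserted without proof, and the final ``elliptic unique continuation for the static HJB'' propagation has no basis here because Theorem~\ref{domain-main}, unlike Theorem~\ref{Main3}, does not assume $\mathscr{F}_1(x,m_1^{(0)})=\mathscr{F}_2(x,m_2^{(0)})$, so the two static HJB equations have a priori different right-hand sides. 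The same flaw recurs in your Phase 2: the difference $\bar U^{(k)}$ solves a backward parabolic equation with the unknown source $-(\mathscr{F}_1^{(k)}-\mathscr{F}_2^{(k)})(m^{(1)})^k$, so ``parabolic unique continuation forces $\bar U^{(k)}\equiv0$'' is again invalid as stated (in the torus case, Theorem~\ref{Main2} kills the source first by passing to time derivatives of the difference along quasi-static linearized solutions; you omit this step).

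The paper avoids the unknown-source obstruction by a different mechanism. It chooses the special terminal costs $\mathscr{G}_n(x,m_T)=\int_\Omega\phi_n(y)\,m(T,y)\,dy$ with $\phi_n\to\delta_{y_0}$: for these data the $j=2$ linearized value function is the spatial constant $\int_\Omega\phi_n\rho(T,\cdot)\,dx$, so the difference $\overline u_n=u_{1,n}^{(1)}-u_{2,n}^{(1)}$ satisfies the \emph{homogeneous} backward equation $-\partial_t\overline u_n-\Delta\overline u_n+\nabla u_1^{(0)}\cdot\nabla\overline u_n=0$ with zero lateral Cauchy data, and Saut--Scheurer unique continuation legitimately gives $\overline u_n\equiv0$ on all of $(t_0,T)\times\Omega$. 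This forces $m_{1,n}^{(1)}$ to solve the pure Fokker--Planck equation with drift $\nabla u_1^{(0)}$, and matching the terminal functionals and letting $n\to\infty$ yields the pointwise identity $m_1^{(1)}(T,y_0)=m_2^{(1)}(T,y_0)$ for every $y_0$ and every initial datum $\mu_0$; the separate operator-theoretic Lemma~\ref{D-data-key} (conjugating the two drift-diffusion semigroups and comparing spectra) then gives $\nabla u_1^{(0)}=\nabla u_2^{(0)}$. None of these ingredients — the spatially constant comparison solution, the delta-approximating terminal costs, or Lemma~\ref{D-data-key} — appears in your proposal, and without them (or a genuine substitute) Phase 1 is not established. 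For the recovery of $\mathscr{F}$ the paper then simply invokes Theorem 2.9 and the MFG unique continuation principle of \cite{Cauchydata-zs}, rather than rerunning the higher-order scheme; your Phase 2 sketch could likely be repaired along the lines of Theorem~\ref{Main2} (differentiate in time to remove the time-independent source, then use the terminal condition), but as written it shares the same invalid unique-continuation step.
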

\subsection{Proof of Theorem~\ref{domain-main}}

This subsection is devoted to the proof of Theorem~\ref{domain-main}. We begin by establishing some auxiliary results.

\begin{lemma}\label{D-data}
    Let $\mathcal{L}_{\mathscr{F}_i}$ for $i=1,2$ be the measurement maps associated with the Master Equation with running costs $\mathscr{F}_i(x,m)$. Fix $t_0 \in (0,T)$ and $m_0 \in \mathcal{P}(\Omega)$, and let $(u_i, m_i)$ be solutions of the corresponding MFG systems with the initial condition $m_i(t_0,\cdot) = m_0$ for $i=1,2$. If $\mathcal{L}_{\mathscr{F}_1} = \mathcal{L}_{\mathscr{F}_2}$, then
    \begin{equation}
        u_1(t,x) = u_2(t,x) \quad \text{on } [t_0,T) \times \Sigma.
    \end{equation}
\end{lemma}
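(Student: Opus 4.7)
The plan is to identify the MFG value function with the Master Equation solution via the dynamic programming principle (DPP), and then exploit the hypothesis that the measurement maps agree over all measure arguments.

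First, at the initial time $t_0$, both MFG trajectories share the same initial distribution $m_0$, so by the construction of $U_i$ from the MFG system~\eqref{mfg-bounded} one has $u_i(t_0, x) = U_i(t_0, x, m_0)$ for every $x \in \Omega$. The hypothesis $\mathcal{L}_{\mathscr{F}_1} = \mathcal{L}_{\mathscr{F}_2}$ is equivalent to $U_1(t, x, m) = U_2(t, x, m)$ for all $(t, x, m) \in (0, T) \times \Sigma \times \mathcal{P}(\Omega)$. Specializing at $t = t_0$, $m = m_0$ and $x \in \Sigma$ immediately yields $u_1(t_0, \cdot) = u_2(t_0, \cdot)$ on $\Sigma$, which is the lemma at the initial time.

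To extend this equality to general $s \in (t_0, T)$, I would invoke the DPP: the restriction of $(u_i, m_i)$ to $[s, T]$ coincides with the MFG solution with initial datum $m_i(s, \cdot)$ at time $s$, hence $u_i(s, x) = U_i(s, x, m_i(s, \cdot))$ for every $x \in \Omega$. Applying the measurement equality at the measure $m_1(s, \cdot)$ gives $u_1(s, \cdot)|_\Sigma = U_2(s, \cdot, m_1(s, \cdot))|_\Sigma$, while directly $u_2(s, \cdot)|_\Sigma = U_2(s, \cdot, m_2(s, \cdot))|_\Sigma$. The required identity $u_1 \equiv u_2$ on $\Sigma$ thus reduces to showing that the two Fokker--Planck trajectories coincide, $m_1 \equiv m_2$ on $[t_0, T]$.

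This trajectory coincidence is the main obstacle. The plan is to analyze the coupled system for $w := u_1 - u_2$ and $\rho := m_1 - m_2$, which carries the trivial initial datum $\rho(t_0, \cdot) = 0$, the matching Neumann conditions $\partial_\nu w = 0$ on $\Sigma$, and the base case $w(t_0, \cdot) = 0$ on $\Sigma$. An energy estimate modeled on the proof of Lemma~\ref{well_pose} --- combining the monotonicity of each $\mathscr{F}_i$, the uniform convexity of $H(x, \cdot)$, and the vanishing Cauchy data of $w$ on $\Sigma \times \{t_0\}$ --- should force $\rho \equiv 0$ on $[t_0, T] \times \Omega$, at which point $u_1 \equiv u_2$ on $[t_0, T) \times \Sigma$ follows at once. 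The key technical difficulty is handling the indefinite cross term $\int_\Omega (\mathscr{F}_1(x, m_2) - \mathscr{F}_2(x, m_2))\, \rho \, dx$, which does not fall under the monotonicity framework. I expect to control it either by a Gronwall argument exploiting the first-order identity $\frac{\delta U_1}{\delta m} = \frac{\delta U_2}{\delta m}$ on $\Sigma$ (obtained by differentiating the measurement hypothesis in $m$ and invoking Theorem~\ref{key-domain}) or by an iterative bootstrap over progressively larger time intervals emanating from $t_0$.
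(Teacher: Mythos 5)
Your base case and DPP reduction are correct and mirror the paper's opening moves: $u_i(t_0,\cdot)=U_i(t_0,\cdot,m_0)$ with the measurement equality gives $u_1(t_0,\cdot)=u_2(t_0,\cdot)$ on $\Sigma$, and for later times $u_i(\tau,\cdot)=U_i(\tau,\cdot,m_i(\tau))$. You are also right that the crux is reconciling $U_2(\tau,\cdot,m_1(\tau))$ with $U_2(\tau,\cdot,m_2(\tau))$ on $\Sigma$, i.e.\ the fact that the two Fokker--Planck trajectories need not coincide.

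The difficulty is that you stop exactly there. Your proposal is a plan, not a proof: you observe the cross term $\int_\Omega (\mathscr{F}_1(x,m_2)-\mathscr{F}_2(x,m_2))\rho\,dx$ is indefinite and "expect" to control it by Gronwall or bootstrap, but neither is carried out, and there is no reason to believe $\rho\equiv 0$ follows from an energy estimate alone --- the two systems have genuinely different nonlocal couplings, so the Lemma~\ref{well_pose}-style argument does not apply as stated. The first-order identity $\frac{\delta U_1}{\delta m}=\frac{\delta U_2}{\delta m}$ on $\Sigma$ you invoke is itself only boundary information and does not immediately give the interior bound you need for Gronwall. So as written, the attempt does not establish the Lemma.

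For comparison, the paper's own proof is much terser and takes a different route: after fixing $\tau$, it writes $U_1(\tau,x,m_1(\tau))=U_2(\tau,x,m_2(\tau))$ on $\Sigma$ and then invokes "uniqueness of solutions to the MFG system." The paper does not attempt to prove $m_1\equiv m_2$; it treats the boundary equality of the value functions as a consequence of the measurement equality applied along each trajectory, identified with the original solutions via the flow property. Your reading makes the hidden reconciliation visible, which is a fair critique, but the cure you propose (proving trajectory coincidence by energy estimates) is neither what the paper does nor is it justified in your write-up. To close the gap you would either need to actually prove $m_1\equiv m_2$ under the measurement hypothesis --- a nontrivial claim that, if true, would substantially simplify the later recovery argument --- or, closer to the paper's intent, argue directly at the level of the flow/semigroup property of the Master Equation that the boundary trace of $u_1$ and $u_2$ must agree for every $\tau$.
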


\begin{proof}
    The equality $\mathcal{L}_{\mathscr{F}_1} = \mathcal{L}_{\mathscr{F}_2}$ implies by definition that $u_1(t_0,x) = u_2(t_0,x)$ on $\Sigma$. Now fix an arbitrary $\tau \in (t_0,T)$ and let $U_i$ be the solution of the Master Equation with running cost $\mathscr{F}_i(x,m)$ for $i=1,2$. Then
    \begin{equation}
        U_1(\tau,x, m_1(\tau)) = U_2(\tau,x, m_2(\tau)) \quad \text{on } \Sigma.
    \end{equation}
    By the uniqueness of solutions to the MFG system, we conclude that
    \begin{equation}
        u_1(\tau,x) = u_2(\tau,x) \quad \text{on } \Sigma.
    \end{equation}
    Since $\tau \in (t_0,T)$ is arbitrary, the identity holds for all $t \in [t_0,T)$.
\end{proof}

\begin{lemma}\label{D-data-key}
    Let $f_j \in C^{2+\alpha,N}(\Omega)$ for $j=1,2$, where the superscript $N$ indicates that $f_j$ satisfies the Neumann boundary condition. Let $u_i(t,x)$ be the solution to the following systems, 
    \begin{equation}
        \begin{cases}        
            \partial_t u_i - \Delta u_i - \nabla \cdot (u_i \nabla f_i) = 0, & \text{ in } (0,T) \times \Omega, \\
            \partial_{\nu} u_i(t,x) = 0, & \text{ on } (0,T) \times \Sigma, \\
            u_i(0,x) = g(x), & \text{ in } \Omega,  
        \end{cases}
    \end{equation}
    for $i=1,2$. Suppose that $u_1(T,x) = u_2(T,x)$ for all $g \in C^{\alpha}(\Omega)$. Then 
    \begin{equation}
        \nabla f_1 = \nabla f_2 \quad \text{in } \Omega.
    \end{equation}
\end{lemma}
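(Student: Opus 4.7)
The plan is to reformulate the hypothesis as an equality between two parabolic semigroups, and then use a unitary gauge transformation to reduce the problem to a symmetry question about self-adjoint Schr\"odinger heat kernels. Since the equation is linear in $g$, the assumption $u_1(T,x)=u_2(T,x)$ for every $g\in C^{\alpha}(\Omega)$ says exactly that the initial-to-final maps $e^{-TL_1}$ and $e^{-TL_2}$ coincide, where $L_i u\equiv -\Delta u-\nabla\cdot(u\nabla f_i)$ is the Fokker--Planck operator on $\Omega$ with Neumann boundary condition on $\Sigma$. Because $C^{\alpha}(\Omega)$ is dense in $L^2(\Omega)$ and the Schwartz kernels of both semigroups are smooth for $T>0$, this operator identity upgrades to a pointwise identity of their kernels on $\Omega\times\Omega$.

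Next, I would symmetrize via a gauge change. The substitution $u_i=e^{-f_i/2}v_i$, which respects the Neumann boundary condition thanks to $\partial_\nu f_i=0$, conjugates $L_i$ into the self-adjoint Schr\"odinger operator
\[
    \tilde L_i\equiv -\Delta+V_i,\qquad V_i\equiv \tfrac{1}{4}|\nabla f_i|^2-\tfrac{1}{2}\Delta f_i,
\]
on $L^2(\Omega)$ with Neumann boundary condition. A direct computation gives $L_i=M_{e^{-f_i/2}}\,\tilde L_i\, M_{e^{f_i/2}}$, where $M_h$ denotes multiplication by $h$, and therefore $e^{-TL_i}=M_{e^{-f_i/2}}\,e^{-T\tilde L_i}\,M_{e^{f_i/2}}$. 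Writing $\tilde K_i(x,y)$ for the Schwartz kernel of $e^{-T\tilde L_i}$ and setting $\rho(x)\equiv e^{(f_1-f_2)(x)/2}$, the equality $e^{-TL_1}=e^{-TL_2}$ becomes the pointwise relation
\[
    \tilde K_1(x,y)=\rho(x)\,\tilde K_2(x,y)\,\rho(y)^{-1},\qquad (x,y)\in\Omega\times\Omega.
\]

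To conclude, I would exploit the symmetry and strict positivity of $\tilde K_i$. Since $\tilde L_i$ is self-adjoint, one has $\tilde K_i(x,y)=\tilde K_i(y,x)$; interchanging $x$ and $y$ in the displayed relation yields the alternate expression $\tilde K_1(x,y)=\rho(y)\,\tilde K_2(x,y)\,\rho(x)^{-1}$, and combining the two identities gives $[\rho(x)^2-\rho(y)^2]\,\tilde K_2(x,y)=0$ for all $x,y\in\Omega$. The Feynman--Kac representation for $\partial_t v-\Delta v+V_2 v=0$ with reflecting Brownian motion on the connected domain $\Omega$ (or, equivalently, the strong parabolic maximum principle with Neumann data) forces $\tilde K_2(x,y)>0$ throughout $\Omega\times\Omega$. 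Therefore $\rho$ is constant on $\Omega$, which means $f_1-f_2$ is constant, so that $\nabla f_1=\nabla f_2$.

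The principal obstacle I anticipate is the careful handling of the Neumann boundary condition on two fronts: first, in verifying that the gauge $u=e^{-f/2}v$ carries $\partial_\nu u=0$ into $\partial_\nu v=0$, which is clean only because $\partial_\nu f=0$; and second, in establishing strict positivity of the Schr\"odinger heat kernel $\tilde K_2$ up to and including $\partial\Omega=\Sigma$. The latter requires either a reflecting Feynman--Kac argument or a Hopf-type boundary maximum principle compatible with Neumann data, both of which are classical but must be set up with some care on the bounded domain $\Omega$.
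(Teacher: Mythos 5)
Your proof is correct, and it takes a genuinely different route to the conclusion after the common gauge change. Both you and the paper substitute $u_i=e^{-f_i/2}v_i$ to conjugate the Fokker--Planck operator into the self-adjoint Schr\"odinger operator $\tilde L_i = H_i = -\Delta + V_i$ with Neumann conditions, and both reduce the hypothesis to an equality of conjugated semigroups. After that, the paths diverge. The paper works at the operator level: it expands in the $H_2$-eigenbasis, uses injectivity of $s\mapsto e^{-Ts}$ and the spectral theorem to conclude that $\varphi\psi_k$ are eigenfunctions of $H_1$ with the same eigenvalues $\lambda_k$, deduces the intertwining identity $H_1\varphi = \varphi H_2$, and then uses a pointwise test-function localization (choosing $h$ with $h(x_0)=0$, $\nabla h(x_0)\neq 0$) to force $\nabla\varphi = 0$. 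You instead work directly at the level of Schwartz kernels: self-adjointness gives the symmetry $\tilde K_i(x,y)=\tilde K_i(y,x)$, the conjugation relation gives $\tilde K_1 = \rho(x)\tilde K_2 \rho(y)^{-1}$, swapping $x\leftrightarrow y$ and combining gives $\bigl[\rho(x)^2-\rho(y)^2\bigr]\tilde K_2(x,y)=0$, and strict positivity of the Neumann heat kernel (Feynman--Kac with reflecting Brownian motion, or strong maximum principle plus Hopf) forces $\rho$ to be constant. Your route is more elementary in that it bypasses the eigenfunction expansion and the somewhat delicate passage from an eigenvalue identity for $e^{-TH_1}$ to one for $H_1$; its price is the need for strict positivity of the heat kernel $\tilde K_2$, a classical but nontrivial input that the paper does not require. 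The paper's route is self-contained modulo spectral theory, but requires the extra test-function argument to localize $\nabla\varphi$. Both are valid; yours is arguably the cleaner of the two given that heat kernel positivity is standard.
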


\begin{proof}
    Define the transformation $v_i = u_i e^{f_i/2}$ for $i=1,2$. A direct computation shows that $v_i$ satisfies 
    \begin{equation}
        \partial_t v_i = \Delta v_i - V_i v_i, \quad \text{} V_i \equiv \frac{1}{4}|\nabla f_i|^2 - \frac{1}{2} \Delta f_i,
    \end{equation}
    where the initial condition $v_i(0,x) = e^{f_i(x)/2} g(x)$ for $i=1,2$. The Neumann boundary condition $\partial_{\nu} v_i = 0$ follows from $\partial_{\nu} u_i = 0$ by the boundary condition and $\partial_{\nu} f_i = 0$ by the assumption.

    Define the self-adjoint operators $H_i = -\Delta + V_i$ with domain incorporating the zero Neumann conditions. The solution for $v_i$ is given by 
    \begin{equation*}
        v_i(t) = \exp(-t H_i)(e^{f_i/2} g),
    \end{equation*}
    and hence
    \begin{equation*}
        u_i(t,x) = e^{-f_i/2} v_i(t,x) = e^{-f_i/2} \exp(-t H_i)(e^{f_i/2} g).
    \end{equation*}

    Define the multiplication operators 
    \begin{equation*}
        A_i : \phi \mapsto e^{f_i/2} \phi, \quad A_i^{-1} : \phi \mapsto e^{-f_i/2} \phi.
    \end{equation*}
    Then the solution operator for the original equation is 
    \begin{equation*}
        S_i(t) = A_i^{-1} \exp(-t H_i) A_i.
    \end{equation*}

    The condition $u_1(T,x) = u_2(T,x)$ for all $g$ implies that $S_1(T) = S_2(T)$, i.e.,
    \begin{equation*}
        A_1^{-1} \exp(-T H_1) A_1 = A_2^{-1} \exp(-T H_2) A_2.
    \end{equation*}
    Multiplying both sides on the left by $A_1$ and on the right by $A_2^{-1}$ yields 
    \begin{equation}\label{trans to operator}
        \exp(-T H_1) (A_1 A_2^{-1}) = (A_1 A_2^{-1}) \exp(-T H_2).
    \end{equation}

    Let $\varphi(x) = e^{(f_1(x) - f_2(x))/2}$, which is the multiplication operator $A_1 A_2^{-1}$. Then equation \eqref{trans to operator} becomes 
    \begin{equation*}
        \exp(-T H_1) \varphi = \varphi \exp(-T H_2).
    \end{equation*}

    Let $\{\lambda_k\}_{k=1}^{\infty}$ be the eigenvalues of $H_2$ with corresponding orthonormal eigenfunctions $\{\psi_k\}$. For each $k$, 
    \begin{equation}\label{lemma-eigen}
        \exp(-T H_1)(\varphi \psi_k) = \varphi \exp(-T H_2) \psi_k = e^{-T \lambda_k} \varphi \psi_k.
    \end{equation}
    Thus, ~\eqref{lemma-eigen} implies that $\varphi \psi_k$ is an eigenfunction of $\exp(-T H_1)$ with eigenvalue $e^{-T \lambda_k}$. Since the exponential map is injective and $H_1$ is self-adjoint, there exists an eigenvalue $\mu$ of $H_1$ such that $\mu = \lambda_k$; accordingly,  $\varphi \psi_k$ is an eigenfunction of $H_1$ with eigenvalue $\lambda_k$. Therefore, we have 
    \begin{equation*}
        H_1(\varphi \psi_k) = \lambda_k \varphi \psi_k = \varphi (H_2 \psi_k),
    \end{equation*}
    which implies that 
    \begin{equation}
        (H_1 \varphi - \varphi H_2) \psi_k = 0 \quad \text{for all } k.
    \end{equation}
    Since $\{\psi_k\}$ forms a basis of $L^2(\Omega)$, we conclude  $H_1 \varphi = \varphi H_2$ as operators.
    Recall that $H_i = -\Delta + V_i$. For any smooth test function $h$, we have 
     $ H_1(\varphi h) = \varphi (H_2 h)$, yielding  
    \begin{equation}
        -\Delta(\varphi h) + V_1 \varphi h = -\varphi \Delta h + \varphi V_2 h.
    \end{equation}
    Expanding the Laplacian term, we have 
    \begin{equation}
        -h \Delta \varphi - 2 \nabla \varphi \cdot \nabla h - \varphi \Delta h + V_1 \varphi h = -\varphi \Delta h + \varphi V_2 h.
    \end{equation}
    Simplifying the above equation leads to 
    \begin{equation}\label{final equation in lemma domain}
        -2 \nabla \varphi \cdot \nabla h - h \Delta \varphi + V_1 \varphi h = \varphi V_2 h.
    \end{equation}

    Now fix an arbitrary point $x_0 \in \Omega$ and choose $h$ such that $h(x_0) = 0$ and $\nabla h(x_0) \neq 0$. Evaluating \eqref{final equation in lemma domain} at $x_0$ gives 
    \begin{equation}
        -2 \nabla \varphi(x_0) \cdot \nabla h(x_0) = 0.
    \end{equation}
    Since $\nabla h(x_0)$ is arbitrary, it follows that $\nabla \varphi(x_0) = 0$. As $x_0$ is arbitrary, $\nabla \varphi = 0$ throughout $\Omega$, implying that  $\varphi$ is constant.

    Substituting $\nabla \varphi = 0$ and $\Delta \varphi = 0$ into \eqref{final equation in lemma domain} yields 
    \begin{equation}
        V_1 \varphi h = \varphi V_2 h \quad \Rightarrow \quad V_1 = V_2.
    \end{equation}
    Since $\varphi = e^{(f_1 - f_2)/2}$ is constant, $f_1 - f_2$ is constant, and hence $\nabla f_1 = \nabla f_2$.
\end{proof}

We now present the proof of the main result in this section.

\begin{proof}[Proof of Theorem~\ref{domain-main}]
    We first establish that $(\nabla u_1^{(0)}, m_1^{(0)}) = (\nabla u_2^{(0)}, m_2^{(0)})$.

    Consider the linearized system around the static solution for $j=1,2$, 
    \begin{equation}\label{linearized system-1/2}
        \begin{cases}
            -\partial_t u_j^{(1)} - \Delta u_j^{(1)} + \nabla u_j^{(0)} \cdot \nabla u_j^{(1)} = 0, & \text{in } (t_0,T) \times \Omega, \\
            \partial_t m_j^{(1)} - \Delta m_j^{(1)} - \nabla \cdot (m_j^{(0)} \nabla u_j^{(1)}) - \nabla \cdot (m_j^{(1)} \nabla u_j^{(0)}) = 0, & \text{in } (t_0,T) \times \Omega, \\
            \partial_\nu m_j^{(1)}(t,x) = \partial_\nu u_j^{(1)}(t,x) = 0, & \text{on } (t_0,T) \times \Sigma, \\
            u_j^{(1)}(T,x) = \dfrac{\delta \mathscr{G}}{\delta m}(x, m_j^{(0)})(m_j^{(1)}(T)), & \text{in } \Omega, \\
            m_j^{(1)}(t_0,x) = \mu_0, & \text{in } \Omega.
        \end{cases}
    \end{equation}

    Since $\mathcal{L}_{\mathscr{F}_1}(\mathscr{G}) = \mathcal{L}_{\mathscr{F}_2}(\mathscr{G})$, Lemma~\ref{D-data} implies that 
    \begin{equation}
        u_1(t,x) = u_2(t,x) \quad \text{on } \Sigma.
    \end{equation}
    Theorem~\ref{key-domain} then yields 
    \begin{equation}
        u_1^{(1)}(t,x) = u_2^{(1)}(t,x) \quad \text{on } \Sigma.
    \end{equation}

    Let $\rho(t,x)$ be the solution to the adjoint system, 
    \begin{equation}
        \begin{cases}
            \partial_t \rho - \Delta \rho - \nabla \cdot (\rho \nabla u_2^{(0)}) = 0, & \text{ in } (t_0,T) \times \Omega, \\
            \partial_\nu \rho(t,x) = 0, & \text{ on } (t_0,T) \times \Sigma, \\
            \rho(t_0,x) = \mu_0, & \text{ in } \Omega.
        \end{cases}
    \end{equation}

    To construct appropriate terminal conditions, we employ a sequence of smooth approximations. Let $\{\phi_n\}_{n=1}^\infty \subset C^\infty(\Omega)$ be a sequence of smooth functions satisfying the following conditions:  
    \begin{itemize}
        \item[(1)] $\phi_n \geq 0$ for all $n$;
        \item[(2)] $\int_\Omega \phi_n(x)  dx = 1$ for all $n$;
        \item[(3)] $\text{supp}(\phi_n) \subset B_{1/n}(y_0)$ for some fixed $y_0 \in \Omega$;
        \item[(4)] $\phi_n \to \delta_{y_0}$ in the sense of distributions as $n \to \infty$.
    \end{itemize}

    For each $n$, we choose $\mathscr{G}_n(x,m_T)=\int_{\Omega}\phi_n(y)m(T,y)\,dy$, so that we have 
    \[
    \dfrac{\delta \mathscr{G}_n}{\delta m}(x, m_j^{(0)})\left(m_j^{(1)}(T)\right)=\int_{\Omega} \phi_n(y)m_j^{(1)}(T,y)\,dy
    \]
    for $j=1,2$. One can verify that for each $n$, the pair $(u_{2,n}^{(1)}, m_{2,n}^{(1)}) = \left(\int_{\Omega} \phi_n(x) \rho(T,x)  dx, \rho(t,x)\right)$ solves the linearized system~\eqref{linearized system-1/2} for $j=2$.

    Define $\overline{u}_n = u_{1,n}^{(1)} - u_{2,n}^{(1)}$. Then $\overline{u}_n$ satisfies 
    \begin{equation}\label{1-2 domain n}
        \begin{cases}
            -\partial_t \overline{u}_n - \Delta \overline{u}_n + \nabla u_1^{(0)} \cdot \nabla \overline{u}_n = 0, & \text{in } (t_0,T) \times \Omega, \\
            \partial_\nu \overline{u}_n = 0, \quad \overline{u}_n = 0, & \text{on } (t_0,T) \times \Sigma, \\
            \overline{u}_n(T,x) = \phi_n(x)\left(m_{1,n}^{(1)}(T) - m_{2,n}^{(1)}(T)\right), & \text{in } \Omega.
        \end{cases}
    \end{equation}

    By the unique continuation principle (see \cite{saut1987unique}), we obtain $\overline{u}_n = 0$ and hence $u_{1,n}^{(1)}(t,x) = u_{2,n}^{(1)}(t,x) = \int_{\Omega} \phi_n(x) \rho(T,x)\,  dx$ for each $n$. Consequently, $m_{1,n}^{(1)}(t,x)$ satisfies 
    \begin{equation}\label{equation for m n}
        \begin{cases}
            \partial_t \rho - \Delta \rho - \nabla \cdot (\rho \nabla u_1^{(0)}) = 0, & \text{ in } (t_0,T) \times \Omega, \\
            \partial_\nu \rho(t,x) = 0, & \text{ on } (t_0,T) \times \Sigma, \\
            \rho(t_0,x) = \mu_0, & \text{ in } \Omega,
        \end{cases}
    \end{equation}
    and
    \begin{equation}
        \int_{\Omega} \phi_n(x) m_{1,n}^{(1)}(T,x)\,  dx = \int_{\Omega} \phi_n(x) m_{2,n}^{(1)}(T,x)  \,dx, \quad \text{for all } n \in \mathbb{N}.
        \label{deltalimit}
    \end{equation}

    Since $ m_{1,n}^{(1)}(T,x)$ and $m_{2,n}^{(1)}(T,x)$ are independent of $n$, taking the limit as $n \to \infty$ in \eqref{deltalimit}, we obtain $m_1^{(1)}(T, y_0) = m_2^{(1)}(T, y_0)$ for all $y_0 \in \Omega$.

    Applying Lemma~\ref{D-data-key}, we conclude that $\nabla u_1^{(0)} = \nabla u_2^{(0)}$. That is, both systems share the same static solution.

 Since we have assumed that the linearized running costs vanish at the static solutions (the assumption (2) of Theorem~\ref{domain-main}), namely, $\frac{\delta \mathscr{F}_j}{\delta m}(x, m_j^{(0)}(x)) = 0$ for $j=1,2$, and we have also established the equality $(\nabla u_1^{(0)}, m_1^{(0)}) = (\nabla u_2^{(0)}, m_2^{(0)})$, applying Theorem 2.9 in \cite{Cauchydata-zs} and the unique continuation principle for mean field games in \cite[Section 6]{Cauchydata-zs} leads to 
\begin{equation*}
    \mathscr{F}_1(x,m) = \mathscr{F}_2(x,m).
\end{equation*}
\end{proof}

\section*{Acknowledgment} 
 The work of H. Liu was supported by the Hong Kong RGC General Research Funds (projects 11311122, 11300821, and 11303125), the NSFC/RGC Joint Research Fund (project  N\_CityU101/21), the France-Hong Kong ANR/RGC Joint Research Grant, A-CityU203/19. 
The work of J. Qian was partially supported by NSF grants 2152011 and 2309534 and an MSU SPG grant.

\bibliographystyle{plain}
\bibliography{ref}

\begin{thebibliography}{10}

\bibitem{ambrosio2008}
Luigi Ambrosio, Nicola Gigli, and Giuseppe Savar{\'e}.
\newblock {\em Gradient Flows in Metric Spaces and in the Space of Probability
  Measures}.
\newblock Birkh{\"a}user, 2008.

\bibitem{Cannarsa2004}
Piermarco Cannarsa and Carlo Sinestrari.
\newblock {\em Semiconcave Functions, Hamilton-Jacobi Equations, and Optimal
  Control}.
\newblock Birkh{\"a}user, 2004.

\bibitem{cardaliaguet2015master}
Pierre Cardaliaguet.
\newblock Notes on mean field games.
\newblock Technical report, 2015.

\bibitem{Carmona2018a}
Ren{\'e} Carmona and Fran{\c{c}}ois Delarue.
\newblock {\em Probabilistic Theory of Mean Field Games}.
\newblock Springer, 2018.

\bibitem{Smooth-var-on-WS}
Ibrahim~Ekren Erhan~Bayraktar and Xin Zhang.
\newblock A smooth variational principle on wasserstein space.
\newblock {\em American Mathematical Society}, 151(9):4089--4098, 2023.

\bibitem{Frechet1906}
Maurice Fr{\'e}chet.
\newblock Sur quelques points du calcul fonctionnel.
\newblock {\em Rendiconti del Circolo Matematico di Palermo}, 22(1):1--72,
  1906.

\bibitem{Gateaux1913}
Paul Gateaux.
\newblock Sur les fonctionnelles continues et les fonctionnelles analytiques.
\newblock {\em Comptes Rendus de l'Acad{\'e}mie des Sciences}, 157:325--327,
  1913.

\bibitem{Hille1948}
Einar Hille.
\newblock {\em Functional Analysis and Semi-Groups}.
\newblock AMS Colloquium Publications, 1948.

\bibitem{Cauchydata-zs}
Catharine~W.K.Lo Hongyu~Liu and Shen Zhang.
\newblock Decoding a mean field game by the cauchy data around its unknown
  stationary states.
\newblock {\em Journal of the London Mathematical Society}, 111, 2025.

\bibitem{Huang2007large}
Minyi Huang, Peter~E. Caines, and Roland~P. Malham{\'e}.
\newblock Large-population cost-coupled lqg problems with nonuniform agents:
  individual-mass behavior and decentralized {$\epsilon$}-nash equilibrium.
\newblock {\em IEEE Transactions on Automatic Control}, 52(9):1560--1571, 2007.

\bibitem{huang2006large}
Minyi Huang, Roland~P Malham{\'e}, and Peter~E Caines.
\newblock Large population stochastic dynamic games: closed-loop
  {M}ckean-{V}lasov systems and the {N}ash certainty equivalence principle.
\newblock {\em Commun. Inf. Syst.}, 6(3):221--252, 2006.

\bibitem{Kolmogorov1931}
Andrey Kolmogorov.
\newblock {\"U}ber die analytischen methoden in der
  wahrscheinlichkeitsrechnung.
\newblock {\em Mathematische Annalen}, 104(1):415--458, 1931.

\bibitem{lasry2007mean}
Jean-Michel Lasry and Pierre-Louis Lions.
\newblock Mean field games.
\newblock {\em Japanese J. Math.}, 2(1):229--260, 2007.

\bibitem{LiuMouZhang2022InversePbMeanFieldGames}
Hongyu Liu, Chenchen Mou, and Shen Zhang.
\newblock Inverse problems for mean field games.
\newblock {\em Inverse Problems}, 39(8):Paper No. 085003, 29, 2023.

\bibitem{Linear_in_Tn}
Jean-Michel~Lasry Pierre~Cardaliaguet, François~Delarue and Pierre~Louis
  Lions.
\newblock The master equation and the convergence problem in mean field games.
\newblock {\em Princeton University Press.}, 381, 2019, AMS-201, 9780691190709.
  ffhal-01196045.

\bibitem{Poincare1890}
Henri Poincar{\'e}.
\newblock Sur les {\'e}quations aux d{\'e}riv{\'e}es partielles de la physique
  math{\'e}matique.
\newblock {\em American Journal of Mathematics}, 12(3):211--294, 1890.

\bibitem{linear_in_Om}
Michele Ricciardi.
\newblock The master equation in a bounded domain with neumann conditions,.
\newblock {\em Communications in Partial Differential Equations}, (2022) ,
  47:5, 912-947, DOI: 10.1080/03605302.2021.2008965.

\bibitem{Volterra1887}
Vito Volterra.
\newblock Sopra le funzioni che dipendono da altre funzioni.
\newblock {\em Rendiconti della R. Accademia dei Lincei}, 3(2):97--105, 1887.

\bibitem{Yosida1948}
K{\=o}saku Yosida.
\newblock On the differentiability of semi-groups of linear operators.
\newblock {\em Proceedings of the Japan Academy}, 24(9):1--3, 1948.

\end{thebibliography}

\end{document}